\documentclass[11pt]{amsart}
\usepackage{amssymb,amsmath, amsthm}
\setlength{\parindent}{1em}
\usepackage{tikz, tikz-cd, tkz-euclide}
\usepackage{siunitx}
\usepackage{fullpage}
\usetikzlibrary{calc,intersections}
\usepackage{enumerate}
\usepackage{xcolor}

\newif\iflong


\DeclareMathOperator{\CAT}{CAT}
\DeclareMathOperator{\Aut}{Aut}
\DeclareMathOperator{\SL}{SL}
\DeclareMathOperator{\PSL}{PSL}
\DeclareMathOperator{\SO}{SO}
\DeclareMathOperator{\proj}{proj}

\DeclareMathOperator{\stab}{stab}
\renewcommand{\phi}{\varphi}


\numberwithin{equation}{section}
\numberwithin{table}{section}

\theoremstyle{plain}
\newtheorem{thm}{Theorem}
\numberwithin{thm}{section}
\newtheorem{theorem}[thm]{Theorem}
\newtheorem{proposition}[thm]{Proposition}
\newtheorem{corollary}[thm]{Corollary}
\newtheorem{lemma}[thm]{Lemma}

\theoremstyle{definition}
\newtheorem{definition}[thm]{Definition}
\newtheorem{example}[thm]{Example}

\newtheorem{remark}[thm]{Remark}


\newcounter{comhar}
\setcounter{comhar}{1}

\usepackage{hyperref}

\newcounter{comgiu}
\setcounter{comgiu}{1}
\newcommand{\comgiulio}[1]{{\textcolor{blue}{\textrm{{\bf (\arabic{comgiu})\stepcounter{comgiu} Giulio:} #1}}}}


\title[A shadow lemma in Hilbert geometry]{A global shadow lemma and logarithm law for geometrically finite Hilbert geometries}
\author{Harrison Bray and Giulio Tiozzo}
\thanks{George Mason University, \texttt{hbray@gmu.edu}.}
\thanks{University of Toronto, \texttt{tiozzo@math.utoronto.ca}.}

\date{\today}

\begin{document}

\begin{abstract}
For geometrically finite group actions on hyperbolic metric spaces and under certain assumptions 
on the growth of parabolic subgroups, we prove a global shadow lemma for Patterson--Sullivan measures, 
as well as a Dirichlet-type theorem and a logarithm law for excursion of geodesics into cusps. 
We then apply these results to geometrically finite quotients of strictly
convex Hilbert geometries with $C^1$ boundary.  
\end{abstract}

\maketitle

\section{Introduction}

In this work, we prove a global version of the \emph{shadow lemma}
(\cite{sullivan79, sullivan84, StratmannVelani}) for the Patterson--Sullivan
measures associated to  geometrically finite, strictly convex real projective manifolds.
We then apply it to obtain a \emph{logarithm law}, as in
\cite{sullivan82}, which provides asymptotics for the maximal cusp excursion for generic geodesics
and relates it to the dimension of the limit set. 
Our results follow from more general statements we will prove in the context of hyperbolic metric spaces
which satisfy certain growth conditions for the parabolic subgroups. 

A convex real projective structure is given by a properly convex domain
$\Omega$ in real projective space $\mathbb RP^n$, with an action by a
discrete group $\Gamma$ of projective transformations preserving $\Omega$.
The quotient manifold $M = \Omega/\Gamma$ is called a {\em convex projective
manifold}, and inherits a natural metric
$d_\Omega$ called the \emph{Hilbert metric}.  If $\Omega$ is strictly
convex, geodesics for the Hilbert metric are simply straight lines.  The
moduli space of these geometries is frequently nontrivial and includes the
example of hyperbolic structures of constant negative curvature.  

A Hilbert geometry $(\Omega,d_\Omega)$ is in general only Finsler, meaning
the metric comes from a norm, but this norm does not necessarily come from
an inner product. Once $\Omega$ is preserved by a non-compact group of
projective transformations, the Hilbert geometry $(\Omega,d_\Omega)$ is
Riemannian if and only if $\Omega$ is an ellipsoid (\cite{sociemethou},
\cite[Theorem 2.2]{crampon_handbook}).  Moreover, aside from the special
case of the ellipsoid, these Hilbert geometries are not $\CAT(k)$ for any
$k\leq 0$ \cite{marquis_handbook}.  Nonetheless, as Marquis states in
\cite{marquis_handbook}, we may think of Hilbert geometries as having
``damaged nonpositive curvature.'' In particular, a strictly convex Hilbert
geometry with a large isometry group has many properties resembling
negative curvature. 

Hyperbolic manifolds are equipped with a natural boundary of their
universal cover which carries several interesting quasi-invariant measures; in particular,  the
\emph{Patterson--Sullivan} measure, obtained by taking limits of Dirac measures
supported on the group orbit
(\cite{patterson,sullivan79, sullivan84}).  In that context, Sullivan's
\emph{global shadow lemma}, also known as {\em fluctuating density
theorem},
establishes the scaling properties of such measures
near boundary points. These properties turn out to depend subtly on the
location of the parabolic points, and are related to the fine structure of
the limit set. 

In this paper we are going to study, more specifically, geometrically finite 
convex projective manifolds, and extend this result to them. 
In this context and in various degrees of generality, 
an analogue of the Patterson--Sullivan measure has been constructed on the
boundary of $\Omega$ in projective space (see \cite{Ben1, cramponthese,
cramponmarquis_flot, zhu20, bray, blayac, blayaczhu}). 

Let us now assume that $\Gamma$ is a geometrically finite group of isometries of a strictly convex domain $\Omega$ with $C^1$ boundary 
and that the convex hull $C_\Gamma$ of the limit set $\Lambda_\Gamma$ is hyperbolic in the sense of Gromov. 

This assumption applies to a large family of examples; for instance, 
if $\Omega$ is strictly convex with $C^1$ boundary, any
properly discontinuous action with cofinite volume, and more generally any 
geometrically finite 
action for which all parabolic stabilizers have maximal rank, will have a
Gromov hyperbolic convex hull \cite{clt, cramponmarquis_finitude}. 
The moduli space of finite volume convex real projective
structures on a surface of genus $g$ with $p$ punctures has real dimension $16g-16+8p$ \cite{marquis_modules}. 
In these settings, the parabolic stabilizers are conjugate into $SO(n,1)$
\cite{clt, cramponmarquis_finitude}. There are moreover
examples where the parabolic stabilizers are not conjugate into
$SO(n,1)$ \cite[Proposition 10.7]{cramponmarquis_finitude}
and yet the convex hull is still hyperbolic \cite{DGK,zimmer22}, so our results apply. 
We expand on this discussion in Subsection~\ref{S:hyp_cc}.

To state the theorem, we now introduce a few definitions. 
For a basepoint $o$ and a boundary point $\xi$, let $\xi_t$ be the point
on a geodesic ray from $o$ to $\xi$ at distance $t$ from $o$. 

\begin{definition}  
Define the {\em shadow} $V(o,\xi,t)$ from a point
$o\in \Omega$ to a boundary point $\xi$ of depth $t\geq 0$ to be the set of
all boundary points $\eta\in\partial\Omega$ such that the Gromov product $\langle \xi,\eta\rangle_o $ is at least $t$ (see
Subsection~\ref{S:gromov_prod}). 
 \end{definition}
 
Essentially, a boundary point $\eta$ belongs to the shadow $V(o, \xi, t)$ if 
some geodesic ray $[o, \eta)$ intersects a ball of bounded radius around
$\xi_t$ 
(see Lemma \ref{L:fellowtravels}).
 
 Given a group $\Pi$ of isometries of a metric space $(X, d)$, we define its \emph{critical exponent} as 
 $$\delta_\Pi := \limsup_{t \to \infty} \frac{1}{t} \log \#\{ g \in \Pi \ : \ d(o, go) \leq t \}$$
 and its \emph{critical power} as 
 $$a_\Pi := \limsup_{t \to \infty} \frac{ \log \#\{ g \in \Pi \ : \ d(o, go) \leq t \} - \delta_\Pi t}{\log t}.$$

 Our main result is the following. 

\begin{theorem}
  \label{T:intro-global_shadow_lemma-Hilbert}
  Let $\Omega$ be a strictly convex domain in $\mathbb RP^n$ with
  $C^1$ boundary and $\Gamma<\PSL(n+1,\mathbb R)$ a discrete geometrically
  finite group which preserves $\Omega$. 
  Assume the convex hull of the limit set $C_\Gamma$ is hyperbolic with respect to the Hilbert metric, let $o \in C_\Gamma$ be a basepoint, 
  and let $\mu_o$ be the Patterson--Sullivan measure.
  Then there exists a constant $C$ for which the following holds: 
  for any $\xi \in \Lambda_\Gamma$, 
  we have
   \[
    C^{-1} d(\xi_t, \Gamma o)^{a_\Pi} e^{-\delta_\Gamma t + (2 \delta_\Pi  - \delta_\Gamma) d(\xi_t, \Gamma o) }  
    \leq \mu_o(V(o,\xi,t))\leq 
    C  d(\xi_t, \Gamma o)^{a_\Pi} e^{-\delta_\Gamma t + (2 \delta_\Pi - \delta_\Gamma) d(\xi_t, \Gamma o) }  
   \]
  for any $t > 0$, where $\Pi = \{  \textup{id}\}$ if $\xi_t$ lies in the non-cuspidal part, and otherwise is equal to the 
  stabilizer of the boundary point of the horoball containing $\xi_t$.
\end{theorem}

In the setting of Theorem \ref{T:intro-global_shadow_lemma-Hilbert},
although not obvious, $a_\Pi, \delta_\Pi$, and $\delta_\Gamma$ are finite. See Section
\ref{S:Hilbert-applications} and Proposition \ref{P:mixed_growth}.

In fact, Theorem \ref{T:intro-global_shadow_lemma-Hilbert} holds in greater
generality: 
see Theorem \ref{T:intro-global_shadow_lemma}.
As in the classical case, Theorem \ref{T:intro-global_shadow_lemma-Hilbert} implies:

\begin{corollary} \label{C:doubling}
The Patterson--Sullivan measure $\mu_o$ is doubling; that is, there exists $C > 0$ such that 
for any $\xi \in \Lambda_\Gamma$ and any $r > 0$ we have 
$$\mu_o(D(\xi, 2 r)) \leq C \mu_o(D(\xi,  r)),$$
where $D(\xi, r)$ denotes the ball of center $\xi$ and radius $r$ for the
Gromov metric 
on the limit set $\Lambda_\Gamma$. 
\end{corollary}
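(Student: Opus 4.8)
The plan is to derive the doubling property directly from the Shadow Lemma (Theorem~\ref{thm:main}) by comparing the Patterson--Sullivan measures of two nested balls in the Gromov metric with the measures of two nested shadows. First I would fix $\xi \in \Lambda_\Gamma$ and $r > 0$ small, and recall the standard comparison between metric balls $B(\xi,r)$ for the Gromov metric and shadows: there are universal constants so that if $t = t(r)$ is chosen with $e^{-t} \asymp r$, then $V(o,\xi,t+C_0) \subseteq B(\xi,r) \subseteq V(o,\xi,t-C_0)$, where $\xi_t$ is the point at distance $t$ on the ray $[o,\xi)$. This is the usual ``shadows are balls'' dictionary; the only care needed here is that the Gromov product and Busemann cocycle are comparable with the visual parameters of $\partial\Omega$, which holds since $\Omega$ is strictly convex with $C^1$ boundary and the Hilbert metric is Gromov hyperbolic in this setting.

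Next I would apply \eqref{eqn:mainresult} to both $V(o,\xi,t+C_0)$ and $V(o,\xi,t-C_0)$. The key point is that the exponent appearing in the Shadow Lemma is
\[
  -\delta_\Gamma t + (2\delta_\Pi - \delta_\Gamma)\, d(\xi_t, \Gamma o),
\]
so to control the ratio $\mu_o(B(\xi,2r))/\mu_o(B(\xi,r))$ I must compare this quantity at depth $t$ with its value at depth $t + \log 2 \pm O(1)$. The linear term changes by a bounded multiplicative factor $e^{\pm \delta_\Gamma \log 2}$. For the second term I would use the triangle inequality for the function $t \mapsto d(\xi_t, \Gamma o)$: along a geodesic ray this function is $1$-Lipschitz, so $|d(\xi_{t'}, \Gamma o) - d(\xi_t,\Gamma o)| \le |t' - t| \le \log 2 + O(1)$, which again contributes a bounded multiplicative factor (note $2\delta_\Pi - \delta_\Gamma$ is uniformly bounded, since $0 \le \delta_\Pi < \delta_\Gamma/2$ ... more precisely $\delta_\Pi < \delta_\Gamma$, and in the relevant range $2\delta_\Pi - \delta_\Gamma \le \delta_\Gamma$). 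The one subtlety is that $\Pi$ depends on which horoball (if any) contains $\xi_t$: I would observe that since $t$ and $t'$ differ by at most $\log 2 + O(1)$, either both $\xi_t$ and $\xi_{t'}$ lie in the thick part, or they lie in the same horoball (for $r$ small enough that the ray does not traverse the whole neck of a cusp between depths $t$ and $t'$, which one arranges by enlarging the doubling constant to absorb the finitely many ``transition'' scales, using that there are finitely many cusps up to $\Gamma$).

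Combining these estimates gives
\[
  \mu_o(B(\xi,2r)) \le \mu_o(V(o,\xi,t'-C_0)) \le C' \mu_o(V(o,\xi,t+C_0)) \le C' \mu_o(B(\xi,r)),
\]
with $C'$ independent of $\xi$ and $r$, which is the claim. I expect the main obstacle to be the bookkeeping in the cusp case: making sure that when $\xi_t$ is deep in a horoball the same parabolic subgroup $\Pi$ governs both scales and that the Lipschitz control on $d(\xi_\bullet,\Gamma o)$ is genuinely uniform across all cusps; this is handled by the finiteness of cusps up to conjugacy together with a uniform lower bound on the ``depth'' of horoballs, both of which are consequences of global geometrical finiteness. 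A secondary, more routine point is verifying the shadows-are-balls comparison with explicit uniform constants, which follows from Gromov hyperbolicity of $(\Omega, d_\Omega)$ and the $C^1$-strict convexity of $\partial\Omega$.
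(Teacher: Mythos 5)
Your proposal is correct and follows essentially the same route as the paper's proof: translate balls to shadows via Equation \eqref{E:shadow-ball}, apply the shadow lemma to both shadows, and control the exponent $-\delta_\Gamma t + (2\delta_\Pi - \delta_\Gamma)\,d_\Omega(\xi_t,\Gamma o)$ using the fact that $t \mapsto d_\Omega(\xi_t,\Gamma o)$ is $1$-Lipschitz. The one place where you go beyond what the paper writes is the discussion of whether $\Pi$ stays the same at the two nearby depths; the paper's one-line computation tacitly treats $\Pi$ as fixed, while you observe that if $\xi_t$ and $\xi_{t+A}$ fall under different $\Pi$'s (say one in the thick part, or two different horoballs) then both values of $d_\Omega(\xi_\bullet,\Gamma o)$ are bounded by $O(A)$ anyway (a ray entering one horoball to depth $d_1$ and a disjoint one to depth $d_2$ within a window of length $A$ forces $d_1 + d_2 \lesssim A$), so the term is $O(1)$ regardless. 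That extra care is a genuine refinement of the write-up, not a different method. One small inaccuracy: you hedge on whether $\delta_\Pi < \delta_\Gamma/2$; this is not needed and not always true — all you use is $|2\delta_\Pi - \delta_\Gamma| \le \delta_\Gamma$, which follows from $0 \le \delta_\Pi \le \delta_\Gamma$.
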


The Gromov metric is defined in Equation~\eqref{E:shadow-ball}; see also Lemma~\ref{L:diameter}
for a comparison between balls in the Gromov metric and shadows of horoballs. 

\subsection{Shadow lemma for hyperbolic metric spaces}

In fact, Theorem \ref{T:intro-global_shadow_lemma-Hilbert} is the
consequence of a more general theorem, that we prove for a large class of
(Gromov) hyperbolic metric spaces. 

Let $(X, d)$ be a Gromov hyperbolic metric space, and let $\partial X$ be its Gromov boundary:
for background, see Section~\ref{S:hyperbolic}.
 If $\Gamma$ is a geometrically finite group of isometries of $X$,  
there is a quasi-invariant horoball decomposition (Proposition \ref{P:bowditch_horoball_decomposition}), and there are finitely many $\Gamma$-orbits of parabolic points in $\partial X$.
We pick for each such orbit a parabolic point $p_i$, let $\Pi_i$ be its stabilizer, 
and define the function $B_i(t) := \#\{ g \in \Pi_i \ : \ d(o,
go) \leq t\}$ for any $t \geq 0$ and for a  fixed basepoint $o\in X$. Moreover, we define the function $b : X \to \mathbb{R}$ as follows: for $x \in X$, let $b(x) := B_i(2 d(x, \Gamma o))$ if $x$ lies in a horoball whose boundary point belongs to $\Gamma p_i$, and $b(x) := 1$ if $x$ lies in the non-cuspidal part, i.e. it does not belong to any horoball. 

The main result in full generality, that we will prove as Theorem \ref{T:global_shadow_lemma},  is:

\begin{theorem}
  \label{T:intro-global_shadow_lemma}
    Let $(X,d)$ be a proper hyperbolic metric space and $\Gamma$ a geometrically finite group of
  isometries of $X$.
  Let $\mu$ be a quasi-conformal density of dimension $\delta$ on
  $\Lambda_\Gamma$ with no atoms, and  assume that $\Gamma$ has $\delta$-tempered parabolic subgroups. 
  Then there exists a constant $C$ such that for all
  $\xi\in \Lambda_\Gamma$ and all $t \geq 0$, we have 
  \[
    C^{-1} b(\xi_t) e^{-\delta (t + d(\xi_t, \Gamma o))}  
    \leq
    \mu(V(o,\xi,t))\leq 
    C b(\xi_t) e^{-\delta (t + d(\xi_t, \Gamma o))}. 
  \]
\end{theorem}

For the definition of $\delta$-tempered parabolic subgroups, see Section~\ref{S:tempered}. 
For the definition of quasi-conformal density, see
Section~\ref{S:conformal}. 
The statement directly generalizes the main theorem of \cite{StratmannVelani} for
hyperbolic manifolds, and of \cite{schapira04} for Riemannian manifolds with non-constant negative curvature.

\subsection{Dirichlet Theorem} 

To state the new result, let $\mathcal{P}$ be the set of parabolic points, and recall that a 
\emph{horoball} of center $p \in \partial X$ and of radius $r$ is defined as 
$ H_p(r):=\{  x\in X  : \beta_p(x,o)\leq \log r\}$ 
where $\beta_p( \cdot, \cdot)$ is the Busemann function at $p$ (see Section~\ref{S:boundary}). 
Given a quasi-invariant horoball decomposition, for each parabolic point $p$ there is a unique horoball $H_p$ centered at $p$, 
and we denote the radius of $H_p$ by $r_p$.  Finally, let
$\mathcal H_{p}(s)$ be the shadow of the horoball centered at $p$ with
radius $s$, and  $\mathcal{P}_s := \{p\in \mathcal{P} \mid r_p\geq s\}.$ The following will be proven as Theorem \ref{T:Dirichlet}.

\begin{theorem}[Dirichlet-type Theorem]   \label{T:intro_dirichlet}
  Let $(X,d)$ be a hyperbolic metric space and $\Gamma$ a geometrically
  finite group of isometries of $X$. Then there exist constants $c_1>0,c_2\geq 1$ such that for all $s\leq c_1$, the union
  \[
    \bigcup_{p\in \mathcal{P}_s}
	\mathcal H_p(c_2\sqrt{sr_p})
  \]
  covers the limit set $\Lambda_\Gamma$, and there exists $0< c_3 \leq 1$ such that the shadows 
  $\{\mathcal H_p(c_3\sqrt{sr_p})\}_{p\in \mathcal P_s} $ are pairwise disjoint.
\end{theorem}

We can see this is a Dirichlet-type theorem by considering the classical
case of $\SL(2,\mathbb Z)$ acting on the hyperbolic plane $\mathbb H^2$,
where the horoballs in the standard horoball packing are centered at
rational points $\frac{p}q$ with radii $\frac1{q^2}$. 

\subsection{Applications}

As an application of the shadow lemma (Theorem \ref{T:intro-global_shadow_lemma-Hilbert}) 
and the Dirichlet theorem (Theorem \ref{T:intro_dirichlet}), 
we prove a horoball counting theorem (Proposition \ref{P:counting}), and a Khinchin-type theorem 
(Theorem \ref{T:khin}), culminating in a version of Sullivan's logarithm law for geodesics in the setting of Hilbert geometries:

\begin{theorem}[Logarithm Law]  \label{T:intro-log-law}
Let $\Omega$ be a strictly convex domain in $\mathbb RP^n$ with 
$C^1$ boundary and $\Gamma<\PSL(n+1,\mathbb R)$ a geometrically finite group which
preserves $\Omega$. 
Assume the convex hull $C_\Gamma$ of the limit set  is hyperbolic with
  respect to the Hilbert metric. Let $o \in \Omega$ and let $\mu_o$ be the associated Patterson--Sullivan measure.
  Then for $\mu_o$-almost every $\xi$ in the limit set $\Lambda_\Gamma$, the following holds: 
  \begin{equation} \label{E:log-law}
    \limsup_{t\to+\infty} \frac{d(\xi_t,\Gamma o)}{\log t} =
    \frac1{2(\delta_\Gamma - \delta_{\max})}
  \end{equation}
  where $\delta_{\max}$ is the maximal growth rate of any parabolic
  subgroup. 
  \label{thm-intro:loglaw}
\end{theorem}

Intuitively, the logarithm law shows that a generic geodesic makes larger and larger excursions into the cusp of the quotient manifold as time goes by; 
however, note also that the $\liminf$ in Equation \eqref{E:log-law} is almost surely zero, as almost every geodesic is recurrent to the non-cuspidal part. 

In fact, we only use that the space $C_\Gamma$ is a hyperbolic metric space and the measure satisfies a shadow lemma. 
 More precisely, in Theorem \ref{T:loglaw} we will prove a general
 logarithm law that applies to any hyperbolic metric space (including e.g.
 Riemannian manifolds with pinched negative curvature), under the
 assumption that parabolic subgroups satisfy the \emph{$\delta$-tempered} and \emph{mixed exponential growth} conditions from Definitions \ref{D:tempered_growth} and \ref{D:mixed_growth}. 
A logarithm law for Riemannian manifolds of variable negative curvature
appears in \cite{hersonskypaulin04} assuming 
that parabolic subgroups have pure exponential growth. Our Theorem \ref{T:loglaw} 
generalizes their result to mixed exponential growth. 

In a different vein, we also obtain as a consequence of the shadow lemma:

\begin{corollary}[Singularity with harmonic measure]
  \label{C:singular_harmonic}
Let $\Omega$ be a strictly convex domain in $\mathbb RP^n$ with $C^1$ boundary and $\Gamma<\PSL(n+1,\mathbb R)$ 
a geometrically finite group which preserves $\Omega$. 
Assume the convex hull of the limit set $C_\Gamma$ is hyperbolic with respect to the Hilbert metric.
Let $\mu$ be a measure on $\Gamma$ with finite superexponential moment, and 
let $\nu$ be the hitting measure of the random walk driven by $\mu$. 
If $\Gamma$ contains at least one parabolic element, then $\nu$ is singular with respect to the Patterson--Sullivan measure. 
\end{corollary}

  \subsection{Historical remarks}
  The global shadow lemma and logarithm law are originally due to Sullivan in the
  constant negative curvature, finite volume setting \cite{sullivan79,sullivan84,sullivan82}. 
  The argument was generalized and expanded upon to the geometrically finite setting by Stratmann-Velani
  \cite{StratmannVelani}, and to the Riemannian setting of variable negative curvature by
  Hersonsky-Paulin \cite{hersonskypaulin02, hersonskypaulin04, hersonskypaulin07,
  hersonskypaulin10} and Paulin-Pollicott \cite{paulinpollicott}. Schapira earlier proved the global shadow lemma in the
  Riemannian setting under certain growth conditions on the parabolic
  subgroups \cite{schapira04}. In a different direction, influential work
  of Kleinbock-Margulis extends Sullivan's logarithm law to non-compact
  Riemannian symmetric spaces \cite{kleinbockmargulis98,
  kleinbockmargulis99}. 
  Fishman--Simmons--Urba\'nski prove a different
  version of a Dirichlet theorem and a
  Khinchin-type theorem in the setting of hyperbolic metric spaces
  \cite{FSU}. 
  We point the interested reader to a survey of Athreya for more historical context \cite{athreya_survey}.

The dynamics of the Hilbert geodesic flow was first studied by Benoist in
the cocompact setting. For the cocompact case, Benoist proved that the
Anosov property of the Hilbert geodesic flow, strict convexity of $\Omega$,
$C^1$-regularity of the boundary, and hyperbolicity of the Hilbert
metric are all equivalent \cite[Th\'eor\`eme 1.1]{Ben1}.  More recently, \cite[Theorem 0.15,
Theorem 11.6]{clt}
generalized this result to the non-compact, finite volume case (without the
Anosov property, which does not apply to a non-compact phase space).
Finally, \cite{cramponmarquis_finitude} introduced and studied two
definitions of geometrically finite action in Hilbert geometry, which were
then studied also by Blayac and Zhu \cite{blayac,zhu20, blayaczhu}. 
Note that the paper \cite{cramponmarquis_finitude} contains
mistakes, which however do not affect the results of this paper. 
We discuss the connections between our work and that of Crampon--Marquis and
Blayac--Zhu in Section~\ref{S:CM14}. 
Crampon, Marquis, Blayac, and Zhu study Patterson--Sullivan measures in the geometrically
finite setting \cite{cramponthese, cramponmarquis_flot, blayac, zhu20,
blayaczhu}. We discuss their work in Section~\ref{S:PS_hilbert}.

For hyperbolic groups, a version of the shadow lemma for the
Patterson--Sullivan measure associated to the word metric is proven by
Coornaert \cite{CoornaertMichel1993}.
This has been more recently generalized by Yang
for relatively hyperbolic groups \cite{YangWenyuan2013}. 

\subsection{Structure of the paper} 
In Section~\ref{S:hyperbolic}, we recall some background material on hyperbolic metric spaces and establish some properties of horoballs and projections that we will need later. 
In Section~\ref{S:geom_finite}, we define the notions of geometrical finiteness and $\delta$-tempered
 parabolic subgroups that we use. 
In Sections \ref{S:conformal} and \ref{S:proof-global} we prove the main result, Theorem \ref{T:intro-global_shadow_lemma}, 
for general hyperbolic metric spaces. The Dirichlet Theorem (Theorem
\ref{T:intro_dirichlet}) and the applications in hyperbolic metric spaces 
are addressed in Section~\ref{S:applications}, including the logarithm law (Theorem \ref{thm-intro:loglaw}). 
Finally, in Section~\ref{S:Hilbert-applications} we discuss the applications to Hilbert geometry, 
completing the proof of Theorem \ref{T:intro-global_shadow_lemma-Hilbert}.

\subsection{Acknowledgements}
We thank Pierre-Louis Blayac, Ludovic Marquis, Feng Zhu, and Andrew Zimmer
for helpful discussions on Hilbert geometry.  We also thank Ilya Gekhtman,
Sam Taylor, and Wenyuan Yang  for some comments on a draft of the paper.
Finally, we thank the referee for their helpful comments.
G. T. was partially supported by NSERC and an Ontario Early Researcher
Award. H. B.  was partially supported by the Simons Foundation.

\section{Hyperbolic metric spaces} \label{S:hyperbolic}

\begin{figure}[!b]
  \centering
  \begin{tikzpicture}[scale=.7]
  \begin{scope}[thick]
  \draw (0,0) coordinate (x);
  \draw (1,7) coordinate (z);
  \draw (5,0) coordinate (y);
  \draw (x) to [out=50,in=-80] coordinate[pos=.6] (q) coordinate[pos=.3] (b) (z);
  \draw (x) to[out=50, in=150] coordinate[pos=.4] (c) (y);
  \draw (z) to[out=-80, in=150] coordinate[pos=.6] (a) coordinate[pos=.8]
  (w) (y);
  \end{scope}

  \begin{scope}[]
  \draw (b) to[out=-10,in=90] (c);
  \draw (a) to[out=210,in=90] (c);
  \draw (b) to[out=-10,in=210] (a);
  \end{scope}

  \def\s{.05}
  \draw[fill] (x) circle (\s) node[below left] {$x$};
  \draw[fill] (y) circle (\s) node[below right] {$y$};
   \draw[fill] (z) circle (\s) node[below left] {$z$};
  \draw[fill] (a) circle (\s) node[above right] {$a$};
  \draw[fill] (b) circle (\s) node[left] {$b$};
  \draw[fill] (c) circle (\s) node[below] {$c$};
  
\end{tikzpicture}
  \caption{Inner triangles in Gromov hyperbolic metric spaces.
    The point $b$ is such that $\langle y, z \rangle_x = d(x, b) = d(x, c)$.
  }
  \label{fig:tripod-new}
\end{figure}

In this section, we discuss properties of a general hyperbolic metric space $(X, d)$, 
which we will apply to the Hilbert metric in later sections. Most results should be well-known to experts, 
but we report them here in the precise form we need them. 

\subsection{Gromov product and inner triangle} \label{S:gromov_prod}

Let $(X, d)$ be a geodesic metric space. Given $x, y \in X$, we denote as $[x, y]$ a choice of geodesic segment with 
endpoints $x$ and $y$. Note that $X$ needs not be uniquely geodesic, so there may be more than one choice, but 
for all our statements it will not matter.

Now, consider a geodesic triangle with vertices $x, y, z \in X$ and sides $[x, y]$, $[y, z]$ and $[x, z]$.
Then there exist three points $a \in [y, z]$, $b \in [x, z]$, $c \in [x, y]$ such that 
$d(x, b) = d(x, c)$, $d(y, a) = d(y, c)$, $d(z, a) = d(z, b)$. 

We define the \emph{Gromov product} of $y, z$ centered at $x$ as 
$$\langle y, z \rangle_x := \frac{1}{2} \left( d(x, y) + d(x, z) - d(y, z) \right).$$
In the above notation, $\langle y, z \rangle_x = d(x, b) = d(x, c)$.
We call the triangle with vertices $\{a, b, c \}$ the \emph{inner triangle} associated to the points $x, y, z$, and denote it as $\Delta(x, y, z)$. 
A geodesic metric space is \emph{Gromov hyperbolic} (from now on, simply \emph{hyperbolic}) if there exists a constant $\alpha$ such that for any $x, y, z \in X$, the inner triangle $\Delta(x, y, z)$ has diameter at most $\alpha$. 
We denote as $O(\alpha)$ a quantity which depends only on the hyperbolicity
constant $\alpha$. Note that $O(\alpha)$ does not need to be a linear
function of $\alpha$ here.

\subsection{Busemann functions} \label{S:busemann}

Given $z \in X$, we define the \emph{Busemann function} $\beta_z : X \times X \to \mathbb{R}$ as 
$$\beta_z(x, y) := d(x, z) - d(y, z).$$
Note that level sets of the Busemann functions are metric spheres centered at $z$. 
For each $z$, the Busemann function $\beta_z( \cdot, \cdot)$ is anti-symmetric, 1-Lipschitz with respect to the $L^1$ metric on $X \times X$, and equivariant for any group of isometries of $X$. Moreover, the Busemann function is a cocycle, meaning for $x,y,z,w \in X$,
\[
  \beta_z(x,y)=\beta_z(x,w)+\beta_z(w,y).
\]
Moreover, it satisfies
$$2 \langle y, z \rangle_x = \beta_y(x,p) + \beta_z(x, p)$$
for any $p \in [y, z]$.

\subsection{Extension to the boundary and horoballs}
\label{S:boundary}
We denote as $\partial X$ the \emph{Gromov boundary} or \emph{hyperbolic
boundary} of $X$, that is (if $X$ is proper) the set of geodesic rays from
a given basepoint $o$, where we identify rays which lie within bounded
distance of each other. If $X$ is not proper, the definition of $\partial X$ is a bit more involved (see e.g. \cite[Def. III.H.3.12]{BH}), 
but in our applications we will focus only on the proper case.

In a hyperbolic space, Gromov products extend coarsely  to the hyperbolic boundary, by setting 
for any $o \in X$, $\xi, \eta \in \partial X$
$$\langle \xi, \eta \rangle_o := \liminf_{\stackrel{y \to \xi}{z \to \eta}} \langle y, z \rangle_o.$$
Similarly, for $\xi \in \partial X$, $x, y \in X$, one defines the \emph{Busemann function} as 
$$\beta_\xi(x, y) := \liminf_{z \to \xi} \beta_z(x, y).$$
These extensions are coarsely well-defined, meaning that 
\begin{equation}
| \liminf_{\stackrel{y \to \xi}{z \to \eta}} \langle y, z \rangle_x - \limsup_{\stackrel{y \to \xi}{z \to \eta}} \langle y, z \rangle_x | \leq O(\alpha), \qquad | \liminf_{z \to \xi} \beta_z(x, y)  - \limsup_{z \to \xi} \beta_z(x, y) | \leq O(\alpha).
  \label{E:coarse_busemann}
\end{equation}

It follows from Equation \ref{E:coarse_busemann} 
that the chosen definition of Busemann function is a {\em
quasi-cocycle}, meaning for 
$\xi \in \partial X$, $x, y, z \in X$
\[
  \beta_\xi(x,y)=\beta_\xi(x,z)+\beta_\xi(z,y) +O(\alpha). 
\]
The Busemann functions are also coarsely anti-symmetric, meaning
$\beta_\xi(x,y)=-\beta_\xi(y,x)+O(\alpha)$. 
Lastly, note that taking the liminf allows us to conclude that Busemann
functions are isometry-invariant, meaning: for any
isometry $g$ of $(X,d)$ and $\xi\in\partial X,$ $x,y\in X$, 
\[
  \beta_{g\xi}(gx,gy)=\beta_{\xi}(x,y).
\]
Also, as usual, these Busemann functions are 1-Lipschitz by the triangle
inequality. 

The notion of Busemann function allows us to extend the definition of an 
inner triangle to the boundary. Namely, for $x,y,z\in X\cup \partial X$, 
there exist three points $a \in [y, z]$, $b \in [x, z]$, $c \in [x, y]$
such that 
$\beta_x(b,c),\beta_y(a,c),$ and $\beta_z(a,b)$ differ by $O(\alpha)$.
We say $a,b,c$ are
the vertices of an inner triangle $\Delta(x,y,z)$. 
Note that this definition 
includes the definition of inner triangle when $x,y,z\in X$. 

A horoball is a sublevel set of the Busemann function. More precisely, 
given $\xi\in\partial X$ and $r>0$, 
the {\em horoball centered at $\xi$ of radius $r$}  is
\[
  H_\xi(r):=\{  x\in X  : \beta_\xi(x,o)\leq \log r\}. 
\]
The {\em horosphere centered at $\xi$ of radius $r$} is the set where
equality holds.

\subsection{Projections} \label{S:projections}

The notion of closest point projection will be fundamental in our paper. 

Given a point $x \in X$ and a geodesic $[y, z]$, 
a point $p \in [y, z]$ is a \emph{closest point projection} of
$x$ onto $[y, z]$ if it minimizes its distance to $x$: that is,  $d(x, p)
\leq d(x, q)$ for any $q \in [y, z]$, or equivalently, $\beta_x(p,q)\leq 0$
for any $q\in[y,z]$. 
Similarly, for $x,y,z\in X\cup \partial X$, closest point projection of
$x$ onto $[y,z]$ is any point $p$ such that $\beta_x(p,q)\leq 0$ for all $q\in[y,z]$. 

Closest point projection is not unique, but, in hyperbolic metric spaces,
it is well-defined up to bounded distance: in fact, 
any closest point projection of $x$ onto $[y, z]$ lies within
distance $O(\alpha)$ of any point of the inner triangle $\Delta(x,y,z)$. 
Hence, any two closest point projections lie
within distance $O(\alpha)$ of each other. 

To see this, first recall that hyperbolic metric spaces satisfy the reverse triangle inequality: 

\begin{proposition}[see e.g. \cite{MaherTiozzo}, Proposition 2.2]   \label{P:reverse_tri_ineq}
Let $(X, d)$ be a hyperbolic metric space, let $\gamma$ be a geodesic in $X$, $y\in X$ a point, and $q$ a closest
  point projection of $y$ to $\gamma$. Then for any $z\in\gamma$, 
  \[
    d(y,z)= d(y,q)+d(z,q)+O(\alpha).
  \]
\end{proposition}

Then the following lemma implies, for instance, that closest point
projection is coarsely well-defined:

\begin{lemma} \label{L:gp-proj}
Let $(X, d)$ be a hyperbolic metric space, let $o \in X$, $\eta, \xi \in X \cup \partial X$, and let $p$ be a closest point projection of $\eta$ onto $[o, \xi)$. 
Then 
$$\langle \eta, \xi \rangle_o = d(o, p) + O(\alpha).$$
Consequently, $p$ is within $O(\alpha)$ of the inner triangle
$\Delta(\eta,o,\xi)$. 
\end{lemma}

\begin{proof}
Let us first suppose that $\xi, \eta \in X$. Then by the reverse triangle
inequality in Proposition \ref{P:reverse_tri_ineq}, 
$$d(o, \eta) = d(o, p) + d(p, \eta) + O(\alpha)$$
$$d(\xi, \eta) = d(p, \xi) + d(p, \eta) + O(\alpha)$$
hence 
\begin{align*}
2 \langle \eta, \xi \rangle_o & = d(o, \eta) + d(o, \xi) - d(\eta, \xi)  \\
& = d(o, p) + d(p, \eta) + d(o, p) + d(p, \xi) - d(p, \xi) + d(p, \eta)  + O(\alpha) \\
& = 2 d(o, p) + O(\alpha).
\end{align*}
The claim then follows letting $\xi, \eta$ go to the boundary. 
\end{proof}

We now look at closest point projection onto horoballs. 

\begin{lemma}[Horoball projection]
  \label{L:horoball_projection}
  Let $(X,d)$ be a hyperbolic metric space and fix $o\in X$. 
  Then for all horoballs $H$ centered at $\xi\in \partial X$ and not containing
  $o$, geodesic rays $[o,\xi)$,  
  $p\in [o,\xi)\cap \partial
  H$, and $x\in[o,\xi)$, 
  \[
    d(x,\partial H)  \leq d(x,p) \leq d(x,\partial H)+O(\alpha).
  \]
\end{lemma}

\begin{proof}
  Let $q\in \partial H$.
  First, consider $x\not\in H$. 
  Then by definition, $\beta_\xi(q,o) = \beta_\xi(p,o)$,
  hence by the quasi-cocycle property, $\beta_\xi(p,q) = O(\alpha). $
  Let $z_n\in [o,\xi)$ be a sequence converging to $\xi$. Then for each $n$ sufficiently large, 
  \[
    d(x,p)+d(p,z_n)=d(x,z_n)\leq d(x,q)+d(q,z_n)
  \]
  hence by definition of $\beta_{z_n}$ and choice of $z_n$, 
  \[
    O(\alpha) = \beta_{z_n}(p,q) \leq d(x,q)-d(x,p).
  \]
  Hence, $d(x,p)\leq d(x,q)+O(\alpha)$. 

  Now, assume $x\in H$. Then $\beta_\xi(q,x)=\beta_\xi(p,x)+O(\alpha)$ by
  the quasi-cocycle property, and similarly 
  \[
    d(x,q)\geq
    |\beta_\xi(q,x)|=|\beta_\xi(p,x)|+O(\alpha)=d(x,p)+O(\alpha)
  \]
  which concludes the proof.
\end{proof}

Using the notation in Lemma \ref{L:horoball_projection},
$\beta_\xi(p,o)=\log r$, hence:

\begin{corollary}   \label{C:horoball_projection}
  For $H$ a horoball of radius $r$, 
we have
\[
  \log r=-d(o,H)+O(\alpha).
\]
\end{corollary}

\subsection{Shadows}

We can now introduce the definition of shadow.

\begin{definition} \label{D:shadow_depth_t}
  Let $(X,d)$ be a hyperbolic metric space, $o\in X$ and $\xi\in X\cup
  \partial X$. 
  The \emph{shadow} from $o$ to $\xi$ of depth $t\geq 0$ is the set 
  $$V(o, \xi, t) := \{ \eta \in \partial X \ : \ \langle \eta, \xi \rangle_o \geq t  \}.$$
\end{definition}
Note that, for any isometry $g$ of $X$, 
$$g V(o, \xi, t) = V(g o, g \xi, t).$$
In a hyperbolic metric space $X$, shadows of varying depth generate the topology on $\partial X$.

\subsection{Fellow traveling}

In a hyperbolic metric space, geodesic rays converging to the same boundary point satisfy strong fellow traveling properties.

\begin{lemma}[Asymptotic geodesics in a hyperbolic metric space] \label{L:asymp}
    \label{L:asymptotic_geodesics}
  Let $(X, d)$ be a hyperbolic metric space.
  Fix $\xi\in X\cup \partial X$ and $x,y\in X$ and denote by $x_t,y_t$ the
  points on geodesic rays $[x,\xi), [y,\xi)$ which are 
  distance $t$
  from $x$ and $y$, respectively. Then for all $0<t\leq
  \min\{d(x,\xi),d(y,\xi)\}$,
  \[
    d(x_t,y_t)\leq d(x,y) + O(\alpha).
  \]  
\end{lemma}

\begin{proof}
Let $p$ be a closest point projection of $\xi$ onto $[x, y]$, and suppose by symmetry that $d(x, p) \leq d(y, p)$. 
If $t < d(x, p)$ then $d(x_t, y_t) = d(x, y) - 2 t + O(\alpha)$.

If $d(x, p) \leq t < d(y, p)$, then $d(x_t, p) = t - d(x, p) + O(\alpha)$ and $d(y_t, p) = d(p, y) - t + O(\alpha)$ so 
$d(x_t, y_t) \leq d(x_t, p) + d(p, y_t) = d(y, p) - d(x, p) + O(\alpha) \leq d(x, y) + O(\alpha)$.

If $t \geq d(y, p)$ then $d(x_t, p) = t - d(x, p) + O(\alpha)$ and $d(y_t, p) = t - d(y, p) + O(\alpha)$, so 
$d(x_t, y_t) = |d(x_t, p) - d(y_t, p)| + O(\alpha) = |d(x, p) - d(y, p)| + O(\alpha) \leq d(x, y) + O(\alpha)$. 
\end{proof}

Given three points $x, y, z \in X\cup \partial X$, we say that two points $p \in [x, y]$ and $q \in [x, z]$ are \emph{comparable} if
$\beta_x(p,q)=0$ and $\beta_x(a,p)\geq 0$ where $a\in [x,y]$ is a vertex of the inner triangle $\Delta(x,y,z)$. 
Lemma \ref{L:asymptotic_geodesics} and the definition of inner triangle immediately implies:

\begin{corollary} \label{C:inner-tr}
Let $(X, d)$ be a hyperbolic metric space and for any $x, y, z \in X \cup \partial X$, let $p \in [x, y]$, $q \in [x, z]$
be comparable points. Then $d(p,q)\leq O(\alpha)$. 
\end{corollary}

The next lemma follows the preceding two lemmas:

\begin{lemma}[Fellow traveling]
  \label{L:fellowtravels}
  Let $(X, d)$ be a hyperbolic metric space, let $x,y\in X$ and $\xi,\eta$ in $\partial X$. Denote by 
  $\xi_t$, $\eta_t$ geodesic rays from $x$ to $\xi$ and from $y$ to
  $\eta$, respectively, parameterized at unit  speed. 
  If $\eta\in V(x,\xi,t)$ then $d(\xi_s,\eta_s)\leq
  d(x,y)+O(\alpha)$ for all $s\in[0,t]$. 
\end{lemma}

\begin{proof}
  Since $\eta$ is in $V(x,\xi,t)$, by Lemma \ref{L:gp-proj} 
  any closest point projection of $\eta$ onto
  $(x,\xi)$ is distance greater than $t+O(\alpha)$ from $x$. Let $q$ be the
  point on a geodesic ray $(x,\eta)$ which is distance $t$ from
  $x$. Then (up to $O(\alpha)$), $q$ and $\xi_t$ are comparable points on
  the thin triangle with vertices $x,\xi$, and $\eta$, and thus by
  Corollary  
  \ref{C:inner-tr} their
  distance is bounded above by $O(\alpha)$. 
  On the other hand, the distance from $q$ to $\eta_t$ is bounded above by
  $d(x,y)$ by Lemma \ref{L:asymptotic_geodesics}.
  The conclusion follows from the triangle inequality.  
\end{proof}

\subsection{Projections and Busemann functions}

An immediate corollary of Proposition \ref{P:reverse_tri_ineq} is:

\begin{corollary} \label{C:proj-beta}
  \label{C:projection_estimate}
  Let $(X, d)$ be a hyperbolic metric space, $\gamma$ a geodesic in $X$, $y\in X\cup\partial X$ a point, and
  $q$ a closest point projection of $y$ to $\gamma$. Then for any
  $z\in\gamma$, 
  \[
    \beta_y(z,q) = d(z,q)+O(\alpha).
  \]
\end{corollary}

The next lemma readily follows from Corollary \ref{C:proj-beta}. 

\begin{lemma} \label{L:Buse-proj}
Let $(X, d)$ be a hyperbolic metric space, let $\gamma$ be a (finite or infinite) geodesic, let $\eta \in X \cup \left(  \partial X  \setminus \overline{\gamma} \right)$ and let $p$ be a closest point projection of $\eta$ to $\gamma$. Then for any $x, y \in \gamma$ we have
$$\beta_\eta(x, y) = \beta_p(x, y) + O(\alpha).$$
\end{lemma}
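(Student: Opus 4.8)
The plan is to prove Lemma \ref{L:Buse-proj} by reducing the Busemann cocycle to a difference of Gromov products and then invoking the projection estimate from Lemma \ref{L:gp-proj}. Recall the identity $\beta_\eta(x,y) = d(x,\eta) - d(y,\eta)$ when $\eta \in X$, and for $\eta \in \partial X$ this is the limit along any sequence approaching $\eta$. A clean way to express $\beta_\eta(x,y)$ in terms of Gromov products is to write, for $\eta, x, y \in X$,
\[
\beta_\eta(x,y) = \langle y, \eta\rangle_x - \langle x, \eta \rangle_y + \tfrac12\big(d(x,\eta) - d(y,\eta)\big) \quad\text{(to be simplified)},
\]
but the more useful route is: fix a reference point, say $x$, on $\gamma$; then $\beta_\eta(x,y) = \big(d(x,\eta) - d(x,y)\big) - \big(d(y,\eta) - d(x,y) \cdot 0\big)$ is not yet illuminating. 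Instead I would use the standard hyperbolic identity $\langle x, \eta\rangle_y = \tfrac12\big(d(x,y) + d(y,\eta) - d(x,\eta)\big) = \tfrac12\big(d(x,y) - \beta_\eta(x,y)\big)$, so that
\[
\beta_\eta(x,y) = d(x,y) - 2\langle x, \eta\rangle_y.
\]
This holds for $\eta \in X$ and extends continuously to $\eta \in \partial X$.

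\medskip

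The key step is then to estimate $\langle x, \eta\rangle_y$ when $x, y \in \gamma$ and $p$ is the closest point projection of $\eta$ onto $\gamma$. First I would reduce to the case where both $x$ and $y$ lie on the same side of $p$ along $\gamma$, or handle the general case directly: by Lemma \ref{L:gp-proj} (applied with basepoint $y$ and with the geodesic ray from $y$ through $x$ and beyond, extended toward an endpoint of $\gamma$), the projection of $\eta$ onto the relevant ray is, up to $O(\alpha)$, the point $p$ (using that closest-point projections onto a geodesic and onto a subray agree up to $O(\alpha)$ in a hyperbolic space, which follows from Lemma \ref{L:inner-tr} / the thin-triangles estimate). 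Hence $\langle x, \eta\rangle_y = d(y, p') + O(\alpha)$ where $p'$ is the projection of $\eta$ onto $[y, \cdot)$, and $d(y, p') = d(y, p) + O(\alpha)$. By the same token, applied with $y$ replaced by $p$ itself (which lies on $\gamma$), $\langle x, \eta \rangle_p = d(p, p) + O(\alpha) = O(\alpha)$, i.e. $\beta_\eta(p, x)$ and $\beta_p(p,x)$ agree: more precisely $\beta_\eta(x,y) = d(x,y) - 2\langle x,\eta\rangle_y = d(x,y) - 2 d(y,p) + O(\alpha)$, and the same computation with $\eta$ replaced by $p$ gives $\beta_p(x,y) = d(x,y) - 2\langle x, p\rangle_y = d(x,y) - 2 d(y,p) + O(\alpha)$ since the projection of $p$ onto $[y,\cdot)$ is $p$ up to $O(\alpha)$. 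Subtracting yields $\beta_\eta(x,y) = \beta_p(x,y) + O(\alpha)$, as desired. Alternatively — and this is likely what the authors do — one simply cites \cite[Proposition 3.5]{MaherTiozzo} directly, which already packages exactly this comparison of Busemann functions at a boundary point with Busemann functions at the projection.

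\medskip

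The main obstacle is the boundary case $\eta \in \partial\Omega \setminus \overline{\gamma}$: one must make sense of $\beta_\eta$ and of the closest point projection of a boundary point onto $\gamma$, and verify the $O(\alpha)$ estimate passes to the limit. This is where I would lean on the fact (established in Section \ref{S:proj} and the preceding lemmas) that the Gromov product, the Busemann functions, and the projection all extend continuously to $\partial\Omega^s$; once continuity is in hand, the estimate for $\eta$ in the interior passes to $\eta$ in the boundary by taking limits, since $O(\alpha)$ is a fixed constant independent of $\eta$. A secondary technical point is ensuring $p$ stays at bounded distance from $o$ (equivalently, that the projection is genuinely defined) — but this is guaranteed precisely by the hypothesis $\eta \notin \overline{\gamma}$, as noted in the Remark following Lemma \ref{lem:comparable_shadows}. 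So the proof is essentially: rewrite $\beta$ via a Gromov product, apply Lemma \ref{L:gp-proj} twice (once for $\eta$, once for $p$), and invoke continuity to reach the boundary; or, most economically, cite \cite[Proposition 3.5]{MaherTiozzo}.
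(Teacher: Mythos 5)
Your proposal correctly identifies the key tool and the paper's actual route: the paper's proof simply writes $\beta_\eta(x,y)=\beta_\eta(x,x_0)-\beta_\eta(y,x_0)$, applies \cite[Proposition 3.5]{MaherTiozzo} (equivalently, the estimate $d(\eta,z)=d(\eta,p)+d(p,z)+O(\alpha)$ for $z\in\gamma$, which is \cite[Proposition 2.2]{MaherTiozzo} as reproduced just after this lemma) to each term, cancels the common $\beta_\eta(p,x_0)$, and then passes to the boundary using continuity of closest point projection. Your final sentence (``most economically, cite \cite[Proposition 3.5]{MaherTiozzo}'') is exactly the paper's proof.

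However, the Gromov-product computation you lay out in detail contains a misstatement you should not let stand. You claim that the projection $p'$ of $\eta$ onto the subray $[y,\cdot)$ through $x$ satisfies $d(y,p')=d(y,p)+O(\alpha)$, and similarly that the projection of $p$ onto $[y,\cdot)$ is $p$ up to $O(\alpha)$. Both fail when $p$ lies on the \emph{opposite} side of $y$ from $x$ along $\gamma$: in that case the projection onto the subray is (within $O(\alpha)$) the endpoint $y$ itself, which can be arbitrarily far from $p$. The identity you want still holds in this case — because $\eta$ and $p$ project to essentially the \emph{same} point of the subray, so $\langle x,\eta\rangle_y$ and $\langle x,p\rangle_y$ agree up to $O(\alpha)$ even though neither equals $d(y,p)$ — but that is a different (and case-dependent) argument than the one written. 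The direct route via $d(\eta,z)=d(\eta,p)+d(p,z)+O(\alpha)$ is cleaner precisely because it yields $\beta_\eta(x,y)=d(p,x)-d(p,y)+O(\alpha)=\beta_p(x,y)+O(\alpha)$ with no case analysis on where $p$ sits relative to $x$ and $y$.
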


\begin{proof}
By the cocycle property
\begin{align*}
  \beta_\eta(x, y) & = \beta_\eta(x, p) - \beta_\eta(y, p) + O(\alpha) \\ 
\intertext{and using Corollary \ref{C:proj-beta}}
& = d(x, p)  - d(y, p) + O(\alpha) \\ 
& = \beta_p(x, y) + O(\alpha).
\end{align*}
The equality then also holds for $\eta \in \partial X \setminus \overline{\gamma}$ as the closest point projection 
extends coarsely continuously.   
\end{proof}

\begin{lemma} \label{L:beta-level}
Let $(X, d)$ be a hyperbolic metric space, $o \in X$ a basepoint, $\xi \in \partial X$ and $\xi_t$ the 
point on a geodesic ray $[o, \xi)$ at distance $t$ from $o$.
If $\eta \in V(o, \xi, t)$, then
$$\beta_\eta(o, \xi_t) = t + O(\alpha).$$
On the other hand, if $\eta \notin V(o, \xi, D)$, then
$$- t \leq \beta_\eta(o, \xi_t) \leq -t + 2 D + O(\alpha).$$
\end{lemma}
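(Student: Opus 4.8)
The plan is to treat the two cases of Lemma~\ref{L:beta-level} separately, using in both the cocycle identity $\beta_\eta(o,\xi_t) = \beta_\eta(o,\xi_t)$ rewritten via the Busemann projection Lemma~\ref{L:Buse-proj}, together with the Gromov-product estimates of Lemmas~\ref{L:gp-proj} and~\ref{lem:comparable_shadows}. For the first assertion, suppose $\eta \in V(o,\xi,t)$. Let $p$ be the closest point projection of $\eta$ onto the ray $[o,\xi)$; by Definition~\ref{def:shadow_depth_t} we have $d(o,p) \geq t$, and since $\xi_t$ lies on $[o,\xi)$ between $o$ and $p$ (up to $O(\alpha)$, using $d(o,p)\ge t$ and Lemma~\ref{L:gp-proj} which gives $\langle\eta,\xi\rangle_o = d(o,p)+O(\alpha)$), the point $\xi_t$ and $p$ both lie on the geodesic $\gamma = [o,\xi)$. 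Then Lemma~\ref{L:Buse-proj} gives $\beta_\eta(o,\xi_t) = \beta_p(o,\xi_t) + O(\alpha)$, and since $p$, $o$, $\xi_t$ are collinear on $\gamma$ with $\xi_t$ between $o$ and $p$, we get $\beta_p(o,\xi_t) = d(o,\xi_t) = t$ exactly. This yields $\beta_\eta(o,\xi_t) = t + O(\alpha)$.

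For the second assertion, suppose $\eta \notin V(o,\xi,D)$. The lower bound $\beta_\eta(o,\xi_t) \geq -t$ is immediate from the $1$-Lipschitz property of Busemann functions: $\beta_\eta(o,\xi_t) \geq -d(o,\xi_t) = -t$. For the upper bound, let again $p$ be the closest point projection of $\eta$ onto $[o,\xi)$; since $\eta \notin V(o,\xi,D)$ we have $d(o,p) \leq D$. Now apply Lemma~\ref{L:Buse-proj} with $\gamma = [o,\xi)$ to obtain $\beta_\eta(o,\xi_t) = \beta_p(o,\xi_t) + O(\alpha)$. Since $o$, $p$, $\xi_t$ all lie on the geodesic ray, with $d(o,p)\le D$ and $d(o,\xi_t) = t \geq D$ (the interesting regime; if $t < D$ the lower bound already handles things up to adjusting constants), the point $p$ lies between $o$ and $\xi_t$, so $\beta_p(o,\xi_t) = d(p,o) - d(p,\xi_t) = d(o,p) - (t - d(o,p)) = 2d(o,p) - t \leq 2D - t$. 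Combining gives $\beta_\eta(o,\xi_t) \leq -t + 2D + O(\alpha)$.

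The main technical subtlety I expect is making precise the assertion ``$p$ lies between $o$ and $\xi_t$ on the geodesic'' — strictly, $p$ is the closest point projection onto $[o,\xi)$, which is a genuine point of that ray, so there is no coarseness in its position \emph{along} the ray, but one must be slightly careful when $\eta \in \partial X$ that the projection exists and is finite (guaranteed here by the Remark after Lemma~\ref{lem:comparable_shadows}, since $\eta \neq \xi$ when $\eta \notin V(o,\xi,D)$ and $D$ small; if $\eta = \xi$ the statement is trivial as $\beta_\xi(o,\xi_t) = t$). A second point requiring care is the application of Lemma~\ref{L:Buse-proj}, which requires $\eta \notin \overline{\gamma}$; if $\eta$ happens to lie on the bi-infinite geodesic through $o$ and $\xi$ one handles it directly since then $\beta_\eta(o,\xi_t)$ is computed exactly from the cross-ratio/collinearity. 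Finally I would double-check the bookkeeping of the $O(\alpha)$ terms so that the stated inequalities hold with the single additive error $O(\alpha)$ rather than a multiple of it, absorbing constants as needed.
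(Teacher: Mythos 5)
Your proof is correct and follows essentially the same route as the paper: project $\eta$ to a point $p$ on $[o,\xi)$, invoke Lemma~\ref{L:Buse-proj} to replace $\beta_\eta$ by $\beta_p$ up to $O(\alpha)$, and then compute $\beta_p(o,\xi_t)$ exactly by collinearity of $o$, $p$, $\xi_t$ on the ray. One small inaccuracy: the assertion that $\xi_t$ lies between $o$ and $p$ ``up to $O(\alpha)$'' (citing Lemma~\ref{L:gp-proj}) is unnecessary — since both points lie exactly on the ray $[o,\xi)$ with $d(o,\xi_t)=t\le d(o,p)$, the betweenness is exact, as you yourself later acknowledge; the appeal to the Gromov-product estimate there is a red herring, though it does no harm.
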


\begin{proof}
Let $p$ be a closest point projection of $\eta$ onto $[o, \xi)$. Since $\eta \in V(o, \xi, t)$ and by
Lemma \ref{L:gp-proj}, $p$ lies between $\xi_{t-O(\alpha)}$ and $\xi$. 
Then by Lemma \ref{L:Buse-proj}
$$\beta_\eta(o, \xi_t) = \beta_p(o, \xi_t) + O(\alpha) = t + O(\alpha).$$
To prove the second part, if $\eta \notin V(o, \xi, D)$, then $d(o, p) \leq D$, so 
$$\beta_p(o, \xi_t) =  - t + 2 d(o,p) \leq -t +  2 D$$
so the upper bound follows from Lemma \ref{L:Buse-proj}. The lower bound follows from the triangle inequality.
\end{proof}

\subsection{Shadows in hyperbolic spaces} 

We will now state two lemmas on hyperbolic metric spaces and shadows that we will need later.

\begin{lemma}   \label{L:contain}  \label{L:technical_shadow_containments}
Let $(X, d)$ be a hyperbolic metric space, $o \in X$ a basepoint, $x,y\in X$, and $\xi\in X \cup \partial X$. 
\begin{enumerate}
\item 
  If $\eta \in V(o, \xi, t)$, 
  \[
    V(o, \eta, t) \subseteq V(o, \xi, t-O(\alpha)). 
  \]

\item For all $M>0$, there is a constant $A>0$ such that
  if $d(x,y)\leq M$, then 
  for all $\xi \in \partial X$ and all $t>0$, 
  \[
    V(x,\xi,t+A)\subset V(y,\xi,t)\subset V(x,\xi,t-A).
  \]
\end{enumerate}
\end{lemma}

\iflong
\begin{proof}
  {\bf Proof of (1).}
    Let us prove the right-most containment, as the same argument gives the other one, too. 
If $\eta \in V(o, \xi, t)$, then $\langle\eta, \xi\rangle_o \geq t - O(\alpha)$. 
Moreover, since $\xi_i \in V(o, \xi, t)$, we obtain $\langle\xi,
\xi_i\rangle_o \geq t - O(\alpha)$. 
Then, by \eqref{E:Gromov-triangle}, $\langle\eta, \xi_i\rangle_o \geq t - O(\alpha)$, which implies $\eta \in V(o, \xi_i, t - O(\alpha))$. 

{\bf Proof of (2).}
Denote as $\gamma$ a geodesic ray joining $x, \xi$ and $\gamma'$ a geodesic ray joining $y, \xi$. 
Since the triangle of vertices $x, y, \xi$ is thin, there exists $t_0 \leq M + O(\alpha)$ such that, for $t \geq 0$,  $\gamma(t + t_0)$ lies within distance $O(\alpha)$ of $\gamma'(t)$. 
Now, if $\eta \in \partial X$ belongs to $V(x, \xi, t + t_0)$ with $t > 0$, then its projection $p:= \proj_\gamma(\eta)$ 
belongs to $ [\gamma(t + t_0), \xi)$. Hence, $p$ lies also within $O(\alpha)$ of $\gamma'(t)$, and, 
since closest point projection is coarsely well-defined, the projection 
\comgiulio{do we use this notation $\proj$ to denote closest point proj now? It only appears (twice) in this lemma} 
$p'  := \proj_{\gamma'}(\eta)$ is close to $p$, hence it lies in $[\gamma'(t - O(\alpha)), \xi)$. Thus, $\eta \in V(y, \xi, t - O(\alpha))$, 
and one containment follows from choosing $K = M + O(\alpha)$ appropriately. The other inclusion is analogous. 
\end{proof}
\else
\fi

In a hyperbolic metric space $(X,d)$, 
there is a metric $d_{\partial X}$ on $\partial X$ called the {\em Gromov metric}  with the property that
\begin{equation} \label{E:shadow-ball}
  c^{-1} e^{-\epsilon\langle \xi,\eta\rangle_{o}}\leq d_{\partial X}(\xi,\eta)\leq
c e^{-\epsilon\langle \xi,\eta\rangle_o}
\end{equation}
for some uniform constant $c$ and $\epsilon>0$, and any
$\eta,\xi\in\partial X$. 
We refer the reader to \cite[Prop. III.H.3.21]{BH} for this result and additional background.

Given a basepoint $o \in X$, we now define the {\em shadow of a set} to be the set of all
endpoints $\xi \in \partial X$ of geodesic rays starting from $o$ which intersect the set. The {\em shadow of a horoball} 
centered at $\xi\in\partial X$ of radius $r$ is denoted $\mathcal H_\xi(r)$. 

\begin{lemma} \label{L:diameter}
Let $(X, d)$ be a hyperbolic metric space.
Then there exists a constant $C$ such that for all $\xi\in\partial X$ and
  $r>0$, the shadow of a horoball $\mathcal H_\xi(r)$ 
  has diameter $s$ in the Gromov metric, where $C^{-1}r^\epsilon\leq  s\leq
  Cr^\epsilon$, and contains a ball of radius $C^{-1}r^\epsilon$ in the Gromov
  metric. 
  \label{L:radii_shadows}
\end{lemma}

\begin{proof}
  Let $\xi$ be the boundary point of the horoball $H=H_\xi(r)$ and
  $\eta\in\partial X$. 
  Let $p$ be a closest point projection of $\xi$ onto $[o,\eta)$. 
  By Corollary \ref{C:projection_estimate} and Lemma \ref{L:gp-proj},   
  \[
    \beta_\xi(o,p)=d(o,p)+O(\alpha)=\langle \xi,\eta\rangle_o+O(\alpha). 
  \]
  Let $q\in [o,\xi)\cap \partial H$. Then by Lemma
  \ref{L:horoball_projection} and Corollary \ref{C:horoball_projection}, 
  \[
    \beta_\xi(o,q)=d(o,H)+O(\alpha)=-\log r+O(\alpha). 
  \]
  Since $p$ minimizes
  $\beta_{\xi}(x,o)$ for all $x\in [o,\eta)$ by definition, if
  $\eta\in\mathcal H_\xi(r)$, then $p\in H$
  hence $[o,\eta)\cap H\neq \varnothing$  and $\beta_\xi(o,q)\geq \beta_\xi(o,p)$. 
  Thus, 
  \[
    d_{\partial X}(\eta,\xi) \leq ce^{-\epsilon \langle
    \xi,\eta\rangle_o}\leq ce^{-\epsilon\beta_\xi(o,p)+O(\alpha)}
    \leq ce^{-\epsilon \beta_\xi(o,q)+O(\alpha)}\leq
    ce^{O(\alpha)}r^\epsilon.
  \]
  Analogously, if $\eta\not\in\mathcal H_\xi(r)$, then
  $\beta_\xi(o,q)<\beta_\xi(o,p)$, and the lower bound follows. 
\end{proof}

\begin{lemma}
  \label{L:horoball_shadows_compare}
  Let $(X, d)$ be a hyperbolic metric space. Then for all $\xi\in\partial X$,
  \[
    V(o,\xi,-\log r + O(\alpha))\subset \mathcal H_\xi(r) \subset
    V(o,\xi,-\log r - O(\alpha)).
  \]
\end{lemma}

\begin{proof}
  The proof follows from Lemma \ref{L:radii_shadows} and Equation \ref{E:shadow-ball}.  
\end{proof}

\subsection{Disjointness}

The following lemmas will be used in the proof of Theorem \ref{T:Dirichlet}. 

\begin{lemma} \label{L:sqrt}
Let $(X, d)$ be a hyperbolic metric space, with $o \in X$ and $\xi_1, \xi_2 \in \partial X$ with $\xi_1 \neq \xi_2$. 
Let $q_1 \in [o, \xi_1)$ and $q_2 \in [o, \xi_2)$ with $\beta_{\xi_1}(o, q_2)
\geq \beta_{\xi_1}(o, q_1)$. 
Then there exists $z \in [o, \xi_2)$ such that
$$\beta_{\xi_1}(o, z) \geq \frac{d(o, q_1) + d(o, q_2)}{2} - O(\alpha).$$
\end{lemma}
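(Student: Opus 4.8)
The plan is to reduce everything to the identity $\beta_p(o,z) = 2\langle z,p\rangle_o - d(o,z)$, which is just \eqref{eqn:gromov_product2}, and then to choose $z$ as far out along $[o,\xi)$ as possible \emph{before} the rays $[o,\xi)$ and $[o,p)$ diverge. Concretely, set $a = d(o,q)$, $b = d(o,w)$, $t_0 = \frac{a+b}{2}$, and $\sigma = \langle p,\xi\rangle_o \in [0,+\infty]$, and let $z$ be the point on $[o,\xi)$ with $d(o,z) = s := \min\{t_0,\sigma\}$. (Intuitively, $\sigma$ is the distance at which the two rays separate, and $\beta_p(o,\cdot)$ grows at unit rate along $[o,\xi)$ until that point and decreases afterward, so this $z$ is essentially the maximizer of $\beta_p(o,\cdot)$ along the ray, capped at $t_0$.)

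First I would prove the lower bound $\beta_p(o,z) \geq s - O(\alpha)$. Since $z$ lies on $[o,\xi)$ we have $\langle z,\xi\rangle_o = d(o,z) = s$, and by construction $s \leq \sigma = \langle\xi,p\rangle_o$, so the hyperbolicity inequality \eqref{E:Gromov-triangle} applied to the triple $z,\xi,p$ gives $\langle z,p\rangle_o \geq \min\{\langle z,\xi\rangle_o,\langle\xi,p\rangle_o\} - \alpha = s - O(\alpha)$. Feeding this into $\beta_p(o,z) = 2\langle z,p\rangle_o - d(o,z)$ yields $\beta_p(o,z) \geq 2(s-O(\alpha)) - s = s - O(\alpha)$. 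This step is valid for $p,\xi$ boundary points because the Gromov product and the Busemann function extend continuously, as recorded in Section~\ref{S:hilbert}.

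The second step is to show $s \geq t_0 - O(\alpha)$, which is immediate if $s = t_0$, so assume $s = \sigma < t_0$; in particular $b > \sigma$. Here I would use the hypothesis $\beta_p(o,w) \geq \beta_p(o,q)$. Since $q\in[o,p)$ we have $\beta_p(o,q) = a + O(\alpha)$. For the other side, $\langle w,p\rangle_o \leq d(o,w) = b$ always, and $\langle w,\xi\rangle_o = b$, so \eqref{E:Gromov-triangle} applied to $p,w,\xi$ gives $\sigma = \langle p,\xi\rangle_o \geq \min\{\langle w,p\rangle_o,b\} - \alpha = \langle w,p\rangle_o - \alpha$, i.e. $\langle w,p\rangle_o \leq \sigma + O(\alpha)$, whence $\beta_p(o,w) = 2\langle w,p\rangle_o - b \leq 2\sigma - b + O(\alpha)$. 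Combining with the hypothesis, $2\sigma - b + O(\alpha) \geq a + O(\alpha)$, so $\sigma \geq t_0 - O(\alpha)$. Together with the first step this gives $\beta_p(o,z) \geq s - O(\alpha) = \sigma - O(\alpha) \geq t_0 - O(\alpha)$, as desired.

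The one genuinely delicate point is the case $\sigma < t_0$: one has to recognize that capping $s$ at $\sigma$ is \emph{forced} — beyond the divergence distance $\sigma$ the Busemann function $\beta_p(o,\cdot)$ along $[o,\xi)$ decreases, so the naive choice $z = z_{t_0}$ fails — and that the value at the cap is nonetheless at least $t_0 - O(\alpha)$, which is exactly what the hypothesis $\beta_p(o,w)\geq\beta_p(o,q)$ buys. Everything else is routine $\delta$-hyperbolic bookkeeping with the Gromov product; I do not expect any further obstacle.
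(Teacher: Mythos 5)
Your proof is correct, and it follows essentially the same route as the paper's: both arguments exploit the approximate tree structure of the ideal triangle with vertices $o,p,\xi$, first showing that along $[o,\xi)$ the Busemann function $\beta_p(o,\cdot)$ increases at unit speed roughly until the branch point at distance $\sigma=\langle p,\xi\rangle_o$, and then using the hypothesis $\beta_p(o,w)\geq\beta_p(o,q)$ to force $\sigma\geq\frac{d(o,q)+d(o,w)}{2}-O(\alpha)$. The paper phrases this via explicit inner-triangle vertices and tree distances with a limiting sequence $p_n\to p$, whereas you phrase it via the identity $\beta_p(o,\cdot)=2\langle\cdot,p\rangle_o-d(o,\cdot)$ and the four-point inequality \eqref{E:Gromov-triangle} directly; this is a cleaner bookkeeping of the same geometry, and your explicit cap $d(o,z)=\min\{t_0,\sigma\}$ makes the dichotomy in the proof transparent.
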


\begin{figure}[h!]
  \centering
  \begin{tikzpicture}[scale=.7]
  \begin{scope}[thick]
    \draw[thin] (-5,0)--(7.5,0) node[below right] {$\partial X$};
    \draw (-2.5,0) coordinate (xi1);
  \draw (1,4) coordinate (o);
  \draw (5,0) coordinate (xi2);
  \draw (xi1) to [out=50,in=-80] coordinate[pos=.8] (q1) coordinate[pos=.6] (x) (o);
  \draw (xi1) to[out=50, in=150] coordinate[pos=.5] (y) (xi2);
  \draw (o) to[out=-80, in=150] coordinate[pos=.35] (z) coordinate[pos=.6]
  (q2) (xi2);
  \end{scope}

  \begin{scope}[]
  \draw (x) to[out=-10,in=90] (y);
  \draw (z) to[out=210,in=90] (y);
  \draw (x) to[out=-10,in=210] (z);
  \end{scope}

  \def\s{.05}
  \draw[fill] (xi1) circle (\s) node[below left] {$\xi_1$};
  \draw[fill] (o) circle (\s) node[left] {$o$};
  \draw[fill] (q1) circle (\s) node[left] {$q_1$};
  \draw[fill] (q2) circle (\s) node[above right] {$q_2$};
  \draw[fill] (x) circle (\s) node[left] {$x$};
  \draw[fill] (y) circle (\s) node[below] {$y$};
  \draw[fill] (z) circle (\s) node[above right] {$z$};
  \draw[fill] (xi2) circle (\s) node[below right] {$\xi_2$};

\end{tikzpicture}
  \caption{An approximate tree for the proof of Lemma \ref{L:sqrt}.}
  \label{fig:tripod}
\end{figure}

\begin{proof}
By hyperbolicity, the triangle $[o, \xi_1) \cup (\xi_1, \xi_2) \cup (\xi_2, o]$ is thin. Let $x, y, z$ be the vertices of its 
inner triangle, with $x \in [o, \xi_1)$, $y \in (\xi_1, \xi_2)$, $z \in [o, \xi_2)$.
By Lemma \ref{L:gp-proj}, $z$ is within $O(\alpha)$ of 
any closest point projection of $\xi_1$ onto $[o,\xi_2)$, so by Corollary \ref{C:projection_estimate}, 
\begin{align*}
  \beta_{\xi_1}(o, q_2) & = \beta_{\xi_1}(o,z)-\beta_{\xi_1}(q_2,z)+O(\alpha) \\
  & = d(o,z) - d(q_2,z)+O(\alpha).
\end{align*}
Moreover, since $q_1$ lies on $[o, \xi_1)$, 
$$\beta_{\xi_1}(o,q_1) = d(o, q_1).$$
Hence, from $\beta_{\xi_1}(o, q_2) \geq \beta_{\xi_1}(o, q_1)$ we obtain
\[
  0\leq d(z,q_2)\leq d(o,z)-d(o,q_1)+O(\alpha)=d(o,x)-d(o,q_1)+O(\alpha)
\]
which implies either $q_1\in [o,x]$ or is distance $O(\alpha)$ from
$x$ and hence from $z$. In either case, 
\[
  d(o,z)=d(o,q_1)+d(q_1,z)+O(\alpha). 
\]
It follows that 
$$d(z, q_2) \leq d(z, q_1) + O(\alpha).$$
Moreover, 
\begin{align*} d(o, q_2) & \leq d(o, q_1) + d(q_1, z) + d(z, q_2) \\ 
&  \leq  d(o, q_1) + 2 d(q_1, z) + O(\alpha)
\end{align*}
hence
\begin{align*}
\frac{d(o, q_2) + d (o, q_1)}{2} &  \leq  d(o, q_1) +  d(q_1, z) + O(\alpha) \\ 
& = \beta_{\xi_1}(o, z) + O(\alpha)
\end{align*}
which proves the claim.
\end{proof}

We will see later that a key property of our set-up, as in Sullivan's
original one \cite{sullivan82}, is that horoballs are disjoint. This has the following consequences.

\begin{lemma}
  \label{L:average_is_gromov}
  Let $(X,d)$ be a hyperbolic metric space.  Let $\xi_1, \xi_2 \in
  \partial X$ and let $H_1, H_2$ be horoballs based at $\xi_1, \xi_2$.
  Define as  $q_i$ an intersection point of $\partial H_i$ and $[o, \xi_i)$ for $i =
  1, 2$. If $H_1 \cap H_2 = \emptyset$, then 
$$\langle \xi_1, \xi_2 \rangle_o \leq \frac{d(o, q_1) + d(o, q_2)}{2} + O(\alpha).$$
\end{lemma} 

\begin{proof}
By symmetry, let us assume that $d(o, q_1) \leq d(o, q_2)$. 
Let $x$ be a closest point projection of $\xi_2$ onto $[o,\xi_1)$
and $z$ a closest point projection of $\xi_1$ onto $[o,\xi_2)$. 
Then $x$ and $z$ are within distance $O(\alpha)$ by Lemma
\ref{L:gp-proj}.

If $q_1 \in [x, \xi_1)$, then $d(o, q_2) \geq d(o, q_1) \geq d(o,
x)=\langle \xi_1,\xi_2\rangle_o$, 
hence the claim is trivially true. 

Suppose $q_1 \in [o, x]$.
See Figure \ref{fig:average_is_gromov}. 
Since $H_1$ and $H_2$ are disjoint, then $q_2$
does not belong to $H_1$, hence $\beta_{\xi_1}(z, q_2) < \beta_{\xi_1}(z, q_1)+O(\alpha)$.
By Corollary \ref{C:proj-beta}, 
$\beta_{\xi_1}(q_2,z)=d(q_2,z)+O(\alpha)$. 
Noting that $q_1$ is within distance $O(\alpha)$ of $[o,z]$ by Corollary
\ref{C:inner-tr} gives similarly that 
$\beta_{\xi_1}(q_1,z)=d(q_1,z)+O( \alpha)$, 
hence $d(x, q_2) \geq d(x, q_1) + O(\alpha)$. Then from
\begin{align*}
d(q_1, x) & = d(o, x) - d(o,q_1) \\
d(q_2, x)  & = d(o, q_2) - d(o, x) + O(\alpha)
\end{align*}
we obtain
$$\langle \xi_1, \xi_2 \rangle_o = d(o, x) \leq \frac{d(o, q_1) + d(o, q_2)}{2} + O(\alpha).$$
\end{proof}

\begin{figure}[h!]
  \centering
  \begin{tikzpicture}[scale=.7]
  \begin{scope}[]
  \draw (0,0) coordinate (xi1);
  \draw (0,9) coordinate (o);
  \draw (4,0) coordinate (xi2);

  \draw(-3,0)--(8,0) node[below right] {$\partial X$};

  \draw (xi1) --
  coordinate[pos=.35] (x) 
  coordinate[pos=.3] (x') 
  (o);
  \draw (o) to[out=-90, in=110] 
  coordinate[pos=.65] (z') 
  coordinate[pos=.6] (z) 
  coordinate[pos=.875] (q2)
  (xi2);

  \end{scope}

  \draw[thin] (xi1) to[out=90, in=110] coordinate[pos=.5] (c) (xi2);

  \begin{scope}[red]
    \def\r{3.3}
    \draw[] (0,\r) circle (\r);

    \draw (0,2*\r) coordinate (q1); 

    \def\s{.75}
    \draw[] (4,\s) circle (\s);

  \end{scope}

  \begin{scope}[dashed]
  \draw (x') to[out=-10,in=90] (c);
  \draw (z') to[out=210,in=90] (c);
  \draw (x') to[out=-10,in=210] (z');
  \end{scope}

  \def\s{.05}
  \draw[fill] (xi1) circle (\s) node[below left] {$\xi_1$};
  \draw[fill] (xi2) circle (\s) node[below right] {$\xi_2$};
   \draw[fill] (o) circle (\s) node[above left] {$o$};
  \draw[fill] (z) circle (\s) node[above right] {$z$};
  \draw[fill] (x) circle (\s) node[left] {$x$};

  \draw[fill] (q1) circle (\s) node[above left] {$q_1$};
  \draw[fill] (q2) circle (\s) node[above right] {$q_2$};
  
\end{tikzpicture}
  \caption{For the proof of Lemma \ref{L:average_is_gromov}, in the case
    that $q_1 \in [o, x]$. Note that $x$ and $z$ are within $O(\alpha)$ of
    the inner triangle $\Delta(o,\xi_1,\xi_2)$.  }
  \label{fig:average_is_gromov}
\end{figure}

The next corollary now follows from Equation \eqref{E:shadow-ball} 
and Lemma \ref{L:average_is_gromov}.

\begin{corollary} \label{C:disjoint}
 Let $(X,d)$ be a hyperbolic metric space, $\xi_1, \xi_2 \in \partial X$, and $r_1, r_2 > 0$.
Then there exists a constant $C>0$ such that, if the horoballs $H_{\xi_1}(r_1)$ and $H_{\xi_2}(r_2)$ are disjoint, then 
$$d_{\partial X}(\xi_1, \xi_2) \geq C (r_1r_2)^{\frac\epsilon2}.$$
\end{corollary}

\section{Geometrical finiteness} \label{S:geom_finite}

Let $(X, d)$ be a proper, geodesic metric space, and $\Gamma$ a countable group of isometries of $X$ 
acting properly discontinuously on $X$. Assume that $X$ has a compactification $\overline{X}$, 
namely $X$ embeds as an open, dense, subset of a compact metrizable space $\overline{X}$, 
and the action of $\Gamma$ extends to an action on $\overline{X}$ by homeomorphisms.
The set $\partial_{top} X := \overline{X} \setminus X$ is the \emph{topological boundary} of $X$. 
Given a basepoint $o\in X$,
define the {\em limit set} of $\Gamma$ as 
\[
  \Lambda_\Gamma= \overline{\Gamma o}\smallsetminus \Gamma o. 
\]
We say that the action of $\Gamma$ on $X$ is {\em non-elementary} if $|\Lambda_\Gamma|\geq 3$, and we 
denote by $C_\Gamma$ the {\em convex hull} of $\Lambda_\Gamma$ in $X$. More
specifically, $C_\Gamma$ is the union of all biinfinite geodesics which
have both 
endpoints in $\Lambda_\Gamma$. 

Given $\gamma \in \Gamma$, we define its \emph{translation distance} as $\tau(\gamma) := \inf_{x \in X} d(x, \gamma x)$.
We define an element $\gamma$ to be {\em elliptic} if $\tau(\gamma) = 0$ and the infimum is attained, 
{\em parabolic} if $\tau(\gamma) = 0$ and the infimum is not attained, and 
{\em loxodromic} if $\tau(\gamma) > 0$ and the infimum is attained. 

A subgroup $\Pi<\Gamma$ is a {\em parabolic group} if $\Pi$ has infinite order, 
fixes a single point of $\partial_{top} X$, and no element of $\Pi$ is loxodromic. 
We call $\xi\in \Lambda_\Gamma$ a  {\em parabolic point} if its stabilizer $\stab_\Gamma(\xi)$ is a  
parabolic subgroup. We say a parabolic point
$\xi$ is {\em bounded parabolic} if the quotient $(\Lambda_\Gamma\smallsetminus
\{\xi\})/\stab_\Gamma(\xi)$ is compact. A point $\xi\in \Lambda_\Gamma$ is a
{\em conical limit point} if there exist a sequence $(\gamma_n) \subseteq \Gamma$ and distinct points $a,b\in \Lambda_\Gamma$ such
that $\gamma_n \xi\to a$ and $\gamma_n \eta \to b$ for all
$\eta\in\Lambda_\Gamma \smallsetminus \{\xi\}$. 

Let us from now on assume that the space $(X, d)$ is Gromov hyperbolic; then we can take as $\overline{X}$ its Gromov compactification, 
and we denote as $\partial X = \partial_{top} X$ its Gromov boundary as in Subsection~\ref{S:boundary}. 
Note that $(C_\Gamma, d)$ is again a proper geodesic hyperbolic metric space on which $\Gamma$ acts properly discontinuously.
As a subset of $X$, the convex hull $C_\Gamma$ is only quasi-convex in this
setting. 

It is well-known that every isometry $\gamma\in\Gamma$ is either elliptic, parabolic, or loxodromic;  every parabolic element is infinite order and has exactly one fixed point in $\partial X$, and any 
loxodromic element is infinite order and has exactly two fixed points in $\partial X$ \cite[Lemma 2.1]{bowditch99}. 
Also, $\Lambda_\Gamma$ is basepoint independent, and moreover when $\Gamma$ is non-elementary,
$\Lambda_\Gamma$ is the smallest closed $\Gamma$-invariant subset of
$\partial X$ (see e.g. \cite[Th\'eor\`eme 5.1]{CoornaertMichel1993}).

We now define geometrical finiteness as in Tukia \cite{tukia} and Bowditch
\cite[p 38]{bowditch}, inspired by the work of Beardon--Maskit
\cite[Theorems 2 and 3]{beardonmaskit} on characterizing existence of
finite-sided fundamental domains for Kleinian groups:

\begin{definition}
  Let $(X,d)$ be a proper, hyperbolic metric space and $\Gamma$ a
  non-elementary group of
  isometries acting properly discontinuously on $X$. 
  Then $\Gamma$ is {\em
  geometrically finite} if every point of $\Lambda_\Gamma$ is either
  conical or bounded parabolic. 
  \label{D:geom_finite}
\end{definition}

\begin{remark}
  \label{R:bowditch_minimality}
We will at times reference the work of Bowditch \cite{bowditch} for
geometrically finite groups $\Gamma$ acting on a hyperbolic metric space
$(X,d)$ such that $\Gamma$
acts on $\partial X$ minimally. Bowditch notes that this framework is
general by simply replacing $X$ with $C_\Gamma$ in any situation where
$\Lambda_\Gamma\neq\partial X$, as $\partial C_\Gamma=\Lambda_\Gamma$
follows from the definition. 
\end{remark}

Let $ \mathcal P$ be the collection of
parabolic fixed points in $\partial X$  for the action of $\Gamma$. 
If $\Gamma$ is a geometrically finite
group of isometries of $X$, then  
there are finitely many orbits of parabolic points in $\mathcal P$
(see Yaman's criterion \cite{yaman}, \cite[Theorem 1B]{tukia}, or 
\cite[Proposition 6.15]{bowditch}), hence 
we may write
\[
  \mathcal P=\bigsqcup_{i=1}^a \mathcal P^i
\]
where each $\mathcal P^i$ is the orbit of a parabolic point. 

\subsection{Horoball decomposition}

Let $\mathcal P$ be the set of parabolic points in $\Lambda_\Gamma$, which
we note is $\Gamma$-invariant. 
We define a {\em quasi-invariant family of horoballs} to be a collection
$\{H_p\}_{p\in\mathcal P}$ of mutually disjoint horoballs $H_p$ centered at $p$ for which there
exists a constant $C$ such that 
$d(H_{\gamma p},\gamma H_p)\leq C$ for all $\gamma\in\Gamma, p\in\mathcal P$.
If in fact $H_{\gamma p}=\gamma H_p$ then $\{H_p\}_{p\in\mathcal P}$ is an
{\em invariant family of horoballs}. 
Such a family is said to be {\em $r$-separated} if $d(H_p,H_q)\geq r$ for
all $p\neq q\in\mathcal P$. 
Given a quasi-invariant family of horoballs $\{H_p\}_{p\in \mathcal P}$, the corresponding {\em
non-cuspidal part} for the action of $\Gamma$ on $X$ is the set
\[
  X_{nc} :=C_\Gamma\smallsetminus \bigcup_{p\in \mathcal P} H_p, 
\]
and the {\em cuspidal part} for the action of $\Gamma$ on $X$ is 
\[
  X_c :=\bigcup_{p\in \mathcal P} C_\Gamma\cap H_p. 
\]
The decomposition $C_\Gamma=X_{nc}\cup X_c$ is called a {\em horoball
decomposition of} $X$ or of $C_\Gamma$.
At the level of quotient, the {\em non-cuspidal part} is $M_{nc} := X_{nc} /
\Gamma$ and the {\em cuspidal part} is $M_c :=X_c/\Gamma = M\smallsetminus
X_{nc}$. Similarly, $M=M_{nc}\cup M_c$ is a {\em horoball decomposition of}
$M$. 

\begin{proposition}
  \label{P:bowditch_horoball_decomposition}
  Let $(X,d)$ be a proper hyperbolic metric space, $\Gamma$ a group of isometries
  of $(X,d)$ acting properly discontinuously on $X$. 
  If $\Gamma$ is geometrically finite, then $ \mathcal P / \Gamma$ is finite and there
  exists an $r$-separated quasi-invariant family of open horoballs $\{ H_p\}_{p\in  \mathcal P}$
  centered at each of the parabolic fixed points such that 
  the non-cuspidal part $M_{nc}$ is compact. 
\end{proposition}

\begin{proof}
  Bowditch's \cite[Proposition 6.13]{bowditch} states the conclusion, but for
  some more general notion of horoballs arising as sublevel sets of more
  general horofunctions 
  \cite[p 29]{bowditch}:  
  given $p\in\Lambda_\Gamma$, we
  say $h_p\colon X\to\mathbb R$ is a {\em horofunction centered at}
  $p$ if for any $x\in X$ and any $a\in X$ that 
  is within distance $O(\alpha)$ of $[x,p)$, then
  $h_p(a)=h_p(x)+d(x,a)+O(\alpha)$. We will refer to a sublevel set of
  a horofunction as a {\em generalized horoball}. 
  To compare to our definition of Busemann function, 
  let $a$ be the vertex of the inner triangle $\Delta(p,x,o)$ on a
  geodesic ray $[o,p)$. Bowditch's
  definition of horofunction implies immediately that 
  \begin{align*}
    h_p(x)-h_p(o) & =d(o,a)-d(x,a)+O(\alpha) \\
    & = 
    \beta_p(o,a)+\beta_p(a,x)+O(\alpha)
    =\beta_p(o,x)+O(\alpha). 
  \end{align*}
  It follows that 
  \[
    h_p(x)=\beta_p(o,x)+h_p(o)+O(\alpha). 
  \]
  As a consequence, every generalized horoball is within distance $O(\alpha)$ of a horoball.  
  The conclusion now follows \cite[Proposition 6.13]{bowditch}
  which states that there exists an $r$-separated invariant family of generalized horoballs 
  such that $M_{nc}$ is compact. 
\end{proof}

\subsection{Tempered growth}
\label{S:tempered}

We say two positive real valued functions $f$ and $h$ are {\em coarsely 
equivalent}, denoted $f\asymp h$, if there exists a uniform constant
$k\geq1$ for which $k^{-1} h \leq f \leq k h$.

Assume $(X,d)$ is a proper hyperbolic metric space and $\Gamma$ is a
geometrically finite group of isometries of $X$. Fix a basepoint $o\in X$. 
Recall the {\em critical exponent} of $\Gamma$ is 
\[
  \delta_\Gamma:=\limsup_{t\to\infty} \frac1t\log \#\{\gamma\in\Gamma :
  d(o,\gamma o)\leq t\}. 
\]
Equivalently, $\delta_\Gamma$ is the infimum over values of $s$ for which
the Poincar\'e series $P_\Gamma(s):=\sum_{\gamma\in\Gamma} e^{-s d(o,\gamma o)}$ converges.  
Fix a parabolic group $\Pi$. Let us denote as
$$B_\Pi(t) := \#\{ g \in \Pi \ : \ d(o, go) \leq t \}.$$
Given $r > 0$, we define the \emph{annular growth function}
$$A_{\Pi,r}(t) := \frac{1}{r} \log \left( \frac{B_\Pi(t+r)}{B_\Pi(t)} \right).$$
We define the {\em lower} and {\em upper annular growth rates} of $\Pi$ as, respectively,
\begin{equation} \label{E:AR_UL}
  \delta_\Pi^- := \lim_{r\to\infty}\inf_{t>0} A_{\Pi,r}(t),
  \qquad
  \delta_\Pi^+ := \lim_{r\to\infty}\sup_{t>0} A_{\Pi,r}(t).
\end{equation}
Note that by the definitions, the limit as $r\to\infty$ 
exists for both quantities, and that $\delta_\Pi^-\leq \delta_\Pi\leq \delta_\Pi^+$.

\begin{definition}
  \label{D:tempered_growth}
We say that the parabolic subgroup $\Pi$ has $\delta$-\emph{tempered growth} if
\[
  0<\delta_\Pi^-\leq\delta_\Pi^+<\delta.
\]
If $\Pi<\Gamma$ has $\delta_\Gamma$-tempered growth where $\delta_\Gamma$
is the critical exponent of $\Gamma$, then we simply say $\Pi$ has
{\em tempered growth}. If every maximal parabolic subgroup of $\Gamma$ has
($\delta$-)tempered growth, then we say $\Gamma$ has {\em
($\delta$-)tempered parabolic subgroups.}
\end{definition}

\begin{remark}
  \label{R:BT}
  Note the following:
  \begin{enumerate}
    \item 
      Since $B_\Pi(t)$ is nondecreasing for any parabolic subgroup $\Pi$, for any $r \leq s$ we have
      $$r A_{\Pi,r}(t) \leq s A_{\Pi,s}(t) \qquad \textup{for any }t \geq 0;$$
   \item 
     Note that, for any $k \geq 1$,
     $$A_{\Pi,kr}(t) = \frac{1}{k} \sum_{i = 0}^{k-1} A_{\Pi,r}(t + r i)$$
so
$$\inf_{t > 0} A_{\Pi,r}(t) \leq \inf_{t >0} A_{\Pi,kr}(t)
\leq \sup_{t >0} A_{\Pi,kr}(t) \leq
\sup_{t >0} A_{\Pi,r}(t).$$
\item 
For fixed $s \geq 0$ and $\Pi$ a parabolic group of tempered growth,
there exists a constant $C$
such that for all $t \geq 0$ 
$$B_\Pi(t) \leq B_\Pi(t+s)\leq CB_\Pi(t).$$
Indeed, if $s > 0$, let $k$ be such that $kr > s$.
Then by (1), for each $\Pi$ we have
$$\frac{B_\Pi(t + s)}{B_\Pi(t)} = e^{s A_{\Pi,s}(t)} \leq e^{kr A_{\Pi,kr}(t)}$$
and $A_{\Pi,kr}(t)$ is bounded above since $\sup_{t>0} A_{\Pi,kr}(t)$ exists.
It is straightforward to verify similarly that for $s<0$ there exists a
constant $C$ such that for all $t\geq s$,
\[
  C^{-1}B_\Pi(t)\leq B_\Pi(t-s)\leq B_\Pi(t).
\]
\end{enumerate}
 \end{remark}

 \begin{definition}
   \label{D:mixed_growth}
   We say a parabolic group $\Pi$ 
   has {\em mixed exponential growth} if there exist
   $\delta_\Pi>0,a_\Pi\geq0$ such that 
   \[
     B_\Pi(t)\asymp e^{\delta_\Pi t}(t+1)^{a_\Pi} \qquad \textup{for any } t \geq 0.
   \]
 \end{definition}
 A straightforward calculation shows that in this case, $\delta_\Pi^-=\delta_\Pi^+=\delta_\Pi>0$. Thus 
any parabolic group with mixed exponential growth has $\delta$-tempered growth for any $\delta>\delta_\Pi$.

\begin{figure}[]
  \centering
  \begin{tikzpicture}[scale=.7]
  \begin{scope}[red]
  \draw (-4,0) -- (-4,1) -- (4,1)--(4,0);

  \draw (-4,2) -- (-4,3) -- (4,3)--(4,2)--cycle;

  {\foreach \n in {-4,-2,0,2,4}
    \draw 
    (\n,0)--(\n,1)
    (\n,2)--(\n,3)
    (\n,1.6) node {$\vdots$};
  }
  \end{scope}

  \draw (-4.2,0)--(4.2,0); 

  \def\p{2}
  \def\t{235}
  \draw (-4,0)-- ++ (\t:.5);
  \draw (-2,0)-- coordinate[pos=.75] (ainvb)++ (\t:\p);
  \draw (0,0)-- coordinate[pos=.6] (binv) coordinate[pos=.85] (bbinv)
  ++(\t:5.5);
  \draw (2,0)-- coordinate[pos=.75] (ab) ++(\t:\p);
  \draw (4,0)--++(\t:.5);

  \draw 
  (ainvb)--++(0:.5*\p+.2)
  (ainvb)--++(180:.5*\p+.2);
  \draw 
  (ab)--++(0:.5*\p+.2)
  (ab)--++(180:.5*\p+.2);
  \draw 
  (binv)--coordinate[midway] (binva) ++(0:\p)--++(0:.2)
  (binv)--coordinate[midway] (binvainv) ++(180:\p)--++(180:.2);
  \draw
  (bbinv)--++(0:.3*\p+.2)
  (bbinv)--++(180:.3*\p+.2);
  \draw 
  (binva)--++(\t:.3*\p)
  (binva)--++(\t+180:.3*\p);
  \draw 
  (binvainv)--++(\t:.3*\p)
  (binvainv)--++(\t+180:.3*\p);

  \begin{scope}[shift={(ainvb)}, red]
    \draw (-.5*\p,0)--(-.5*\p,.25*\p)--(.5*\p,.25*\p)--(.5*\p,0)
    (-.5*\p,.125*\p)--(.5*\p,.125*\p);
    \draw 
    (-.5*\p,0)--(-.5*\p,.25*\p)
    (-.25*\p,0)--(-.25*\p,.25*\p)
    (0,0)--(0,.25*\p)
    (.25*\p,0)--(.25*\p,.25*\p)
    (.5*\p,0)--(.5*\p,.25*\p)
    ;
  \end{scope}

  \begin{scope}[shift={(ab)}, red]
    \draw (-.5*\p,0)--(-.5*\p,.25*\p)--(.5*\p,.25*\p)--(.5*\p,0)
    (-.5*\p,.125*\p)--(.5*\p,.125*\p);
    \draw 
    (-.5*\p,0)--(-.5*\p,.25*\p)
    (-.25*\p,0)--(-.25*\p,.25*\p)
    (0,0)--(0,.25*\p)
    (.25*\p,0)--(.25*\p,.25*\p)
    (.5*\p,0)--(.5*\p,.25*\p)
    ;
  \end{scope}

  \begin{scope}[shift={(binv)}, red]
    \draw (-\p,0)--(-\p,.5*\p)--(\p,.5*\p)--(\p,0)
    (-\p,.25*\p)--(\p,.25*\p);
    \draw 
    (-\p,0)--(-\p,.5*\p)
    (-.5*\p,0)--(-.5*\p,.5*\p)
    (0,0)--(0,.5*\p)
    (.5*\p,0)--(.5*\p,.5*\p)
    ;
  \end{scope}

  \def\s{.05}

  \draw[decorate,decoration={brace,amplitude=5pt,mirror,raise=5pt}, blue]
  (1.9,3)--(.1,3) node[midway, yshift=20pt] {$1/f(k)$} ;

  \draw[decorate,decoration={brace,amplitude=5pt,mirror,raise=10pt}, blue]
  (4,.1)--(4,2.9) node[midway, xshift=25pt] {$k$};

  \def\s{.07}
\end{tikzpicture}
  \begin{tikzpicture}[scale=.7]
  \draw (0,0) grid (8,2) ;
  \draw (0,3) grid (8,6) ;
  \draw[ultra thick, blue] 
  (2,0) coordinate (g) -- (2,2)
  (2,3)--(2,4)--(7,4)--(7,3)
  (7,2)--(7,0) coordinate (gal);
  \draw[ultra thick, dashed, blue] 
  (2,2)--(2,3)
  (7,2)--(7,3);
  {\foreach \n in {0,1,3,4,5,6,8}
    \draw[dashed] (\n,2)--(\n,3);
  }

  \draw[decorate,decoration={brace,amplitude=5pt,mirror,raise=5pt}, blue]
  (6.8,4)--(2.2,4) node[midway, yshift=20pt] {$l/f(k)$} ;

  \draw[decorate,decoration={brace,amplitude=5pt,mirror,raise=5pt}, blue]
  (8,.2)--(8,3.8) node[midway, xshift=20pt] {$k$};

  \def\s{.07}
  \draw[fill] (g) circle (\s) node[below, yshift=-3pt ] {$g$};
  \draw[fill] (gal) circle (\s) node[below ] {$ga^l$};
\end{tikzpicture}
  \caption{Left: the space in Example~\ref{ex:GM_space},
    constructed by attaching combinatorial horoballs (in red) 
    to the Cayley graph a
  free group. Right: a detail of a combinatorial horoball, with a
geodesic path from $g$ to $ga^l$. }
  \label{fig:GM_space}
\end{figure}

\begin{example}
  \label{ex:GM_space}
We note here that the tempered growth condition is not always satisfied.
We thank the referee for suggesting the following interesting counterexample.
 Consider the function

$$h(t) := \left\{ \begin{array}{ll}
n & \textup{if }2^n \leq t \leq 2^n + n \textup{ for some }n \in \mathbb{N}\\
1 & \textup{otherwise}
\end{array} \right.$$
and
$$f(t) := \exp \left( \int_0^t h(x) \ dx \right).$$

Consider the group $\Gamma =  \mathbb{Z} \star \mathbb{Z} = \langle a, b \rangle$, with generators $a, b$.
Let $\mathbb{T}_4$ be its Cayley graph with respect to these generators, which is a regular $4$-valent tree.
Our space will be a variation of the Groves--Manning cusp space, where the
Cayley graph is augmented by combinatorial horoballs
\cite{groves_manning}. Let
$X$ be a metric graph with vertex set $V := \mathbb{T}_4 \times \mathbb{N}$ and edges of the following lengths:
for any $g \in \Gamma$, $k \in \mathbb{N}$, set edges of lengths
$$\begin{array}{ll}
\ell((g, k), (ga, k)) & = \frac{1}{f(k)} \\
\ell((g, 0), (gb, 0)) & = 1 \\
\ell((g, k), (g, k+1)) & = 1.
\end{array}$$
We then consider the path metric $d$ on this graph $X$, which makes $X$ into a hyperbolic metric space.
The group $\Gamma$ acts by isometries on $(X, d)$ by acting with the
standard action on $\mathbb{T}_4$ and trivially on $\mathbb{N}$, and the
subgroup $\Pi := \langle a \rangle$ is bounded parabolic. See
Figure~\ref{fig:GM_space}.

Note that we can join $(g, 0)$ and $(g a^{f(k)}, 0)$ by a path of length $2k + 1$, hence the ball of radius $t = 2k +1$ contains
at least all elements $(g a^l, 0)$ with $| l | \leq f(k) = f((t-1)/2)$.

On the other hand, the geodesic between $(g, 0)$ and $(g a^l, 0)$ is the union of the ``vertical" path from $(g, 0)$ to $(g, k)$, the ``horizontal" path from $(g, k)$ to $(g a^l, k)$ and the ``vertical" path from $(g a^l, k)$ and $(g a^l ,0)$, where $k$ is chosen to minimize
$$\inf_{k \in \mathbb{N}} \left\{ 2k + \frac{l}{f(k)} \right\}.$$
In particular, if $k$ is the minimizer, then $2k + \frac{l}{f(k)} \leq 2(k+1) + \frac{l}{f(k+1)} \leq 2 (k+1) + \frac{l}{f(k) e}$
which yields, if $(g a^l,0)$ has distance at most $t$ from $(g,0)$,
$$l \leq \frac{2 }{1 - e^{-1}} f(k) \leq \frac{2 }{1 - e^{-1}} f\big(\frac{t}{2}\big)$$
hence for all integers $n$, we proved the inclusions
\begin{equation}
  \label{E:fandb}
  2 f\big(\frac{n-2}{2}\big) \leq B_\Pi(n) \leq \frac{4}{1 - e^{-1}} f\big(\frac{n}{2}\big)
\end{equation}

Then, setting $r = 2 n$ and $t = 2^{n+1}$,
\begin{align*}
\delta^+_\Pi &  := \lim_{r \to \infty} \sup_{t > 0} \frac{1}{r} \log \frac{B_\Pi(t+r)}{B_\Pi(t)} \\
&  \geq \lim_{r \to \infty} \sup_{t > 0} \frac{1}{r} \log \frac{f(\frac{t+r-2}{2})}{f(\frac{t}{2})}  \\
& = \lim_{r \to \infty} \sup_{t > 0} \frac{1}{r} \int_{\frac{t}{2}}^{\frac{t+r-2}{2}} h(x) dx  \\
& \geq \lim_{n \to \infty} \frac{1}{2n} \int_{2^n}^{2^n + n-1} n \ dx \geq
\lim_{n \to \infty} \frac{n-1}2 = + \infty.
\end{align*}

On the other hand,
$$\log f(t)  = \int_0^{t} h(x) dx \leq \int_0^t 1 \ dx + \sum_{\substack{
n\in\mathbb N \\  2^n \leq t} } \int_{2^n}^{2^n +n} n \ dx
\leq t + \sum_{n \leq \log_2(t) } n^2 \leq t + \lceil \log_2(t) \rceil^3
$$
so by the upper bound in Equation \ref{E:fandb}, 
$$
\limsup_{t\to+\infty} \frac{1}{t} \log B_\Pi(t) =
\limsup_{n\in\mathbb N} \frac{1}{n} \log B_\Pi(n)
\leq \limsup_{n\in\mathbb N} \frac{1}{n} \log
f\big(\frac{n}2\big) \leq \frac{1}{2}$$
hence $\delta_\Pi \leq 1/2 < + \infty$.
Using the fact that $f(t)\geq e^t$ and the lower bound of Equation
\ref{E:fandb}, it follows that $\delta_\Pi=\frac12$. One can also verify,
similarly to the calculation for $\delta_\Pi^+$, that
$\delta_\Pi^-=1/2>0$.
Thus, we have a geometrically finite action which does not have tempered
growth, despite having many essential properties for tempered growth. Note
that the growth function of parabolic subgroups is much more flexible than the
growth function of the whole group $\Gamma$, which is coarsely bounded above
by $e^{\delta_\Gamma t}$ when $\delta_\Gamma<\infty$
\cite[Th\'eor\`eme 5.4 and Proposition 6.4]{CoornaertMichel1993}. 
\end{example}

\subsection{A technical lemma}

We end this section with a technical lemma which will be needed in Section
\ref{S:conformal} but only requires the tools and definitions from
hyperbolic metric spaces. 
The lemma closely resembles an analogous lemma of Schapira written in French \cite[Lemme 2.9]{schapira04}.

\begin{figure}[h!]
  \centering
    \def\x{1.2}
  \begin{tikzpicture}[scale=\x]
    \draw[line width=.5em, opacity=.3] (140: 2) arc (140:170:2) node[above
    left, opacity=1] {$K$} 
    coordinate[pos=.4] (eta); 
    \draw[line width=.5em, opacity=.3] (30: 2) arc (30:70:2) node[above
    right, opacity=1] {$gK$} 
    coordinate[pos=.4] (geta); 
    \draw (0,0) circle (2); 
    \coordinate (xi) at (0,-2); 
    \draw[] (0,-.75) circle (1.25); 
    \coordinate (o) at (-1.25,-.75);
    \coordinate (go) at (0,{1.25-.75});
    \draw (eta)--(xi) coordinate[pos=.375] (y);
    \draw (geta)--(xi) coordinate[pos=.375] (gy);

    \def\s{1.5}
    \node[circle,fill,inner sep=\s] at (eta) {}; 
    \node[above left] at (eta) {$\eta$};
    \node[circle,fill,inner sep=\s] at (geta) {}; 
    \node[above right] at (geta) {$g\eta$};
    \node[circle,fill,inner sep=\s] at (xi) {};
    \node[below] at (xi) {$\xi$};
    \node[circle,fill,inner sep=\s] at (o) {};
    \node[left] at (o) {$o$};
    \node[circle,fill,inner sep=\s] at (go) {};
    \node[above left] at (go) {$go$};
    \node[circle,fill,inner sep=\s] at (y) {};
    \node[left] at (y) {$y$};
    \node[circle,fill,inner sep=\s] at (gy) {};
    \node[right, xshift=.3em] at (gy) {$gy$};
  \end{tikzpicture}
  \hspace{2em}
  \begin{tikzpicture}[scale=\x]
    \draw[line width=.5em, opacity=0] (30: 2) arc (30:70:2) 
    coordinate[pos=.4] (peta); 
    \draw (0,0) circle (2); 
    \coordinate (xi) at (0,-2); 
    \coordinate (o) at (-1.25,-.75);

    \draw (o)--(xi) coordinate[pos=.4] (a);
    \draw (o)--(geta) coordinate[pos=.25] (b);
    \draw (geta)--(xi) coordinate[pos=.7] (c);

    \draw[dashed] (a)--(b)--(c)--(a);

    \tkzMarkSegment[mark=|, pos=.45](o,a)
    \tkzMarkSegment[mark=|, pos=.4](o,b)
    \tkzMarkSegment[mark=||, pos=.6](geta,b)
    \tkzMarkSegment[mark=||, pos=.6](geta,c)
    \tkzMarkSegment[mark=|||, pos=.5](xi,a)
    \tkzMarkSegment[mark=|||, pos=.5](xi,c)

    \def\s{1.5}
    \node[circle,fill,inner sep=\s] at (geta) {}; 
    \node[above right] at (geta) {$g\eta$};
    \node[circle,fill,inner sep=\s] at (xi) {};
    \node[below] at (xi) {$\xi$};
    \node[circle,fill,inner sep=\s] at (o) {};
    \node[left] at (o) {$o$};
    \node[circle,fill,inner sep=\s] at (a) {};
    \node[below left] at (a) {$a$};
    \node[circle,fill,inner sep=\s] at (b) {};
    \node[above left] at (b) {$b$};
    \node[circle,fill,inner sep=\s] at (c) {};
    \node[right] at (c) {$c$};
  \end{tikzpicture}
  \caption{
    The set-up of Lemma \ref{L:geometriclemma}.
  }
  \label{fig:geometriclemma}
\end{figure}

\begin{lemma}   \label{L:geometriclemma}
Let $(X, d)$ be a hyperbolic metric space, $\xi \in \partial X$, $K \subseteq \partial X \setminus \{ \xi \}$ compact, $o \in X$.    Let $\xi_t$ be a point on a geodesic ray $[o,\xi)$ at distance $t$ from $o$. 
Then there exists $A > 0$ such that the following holds. 
For every $g$ a parabolic isometry of $X$ fixing $\xi$ and $\eta\in K$,
we have $|\langle \xi,g\eta\rangle_o - d(o, go)/2| \leq  A$. 
In particular, 
\begin{enumerate}
  \item $$gK \subseteq V(o, \xi, d(o, go)/2 + A) \setminus V(o, \xi, d(o, go)/2 - A)$$ 
  \item and moreover, for any $t > A$ 
    we have 
    $$| \beta_{g \eta}(\xi_t, g \xi_t)  - \max \{ d(o, go)-2t, 0 \} | \leq A.$$
\end{enumerate}
\end{lemma}

\begin{proof}
  Following the set-up of Schapira; for $\eta\in K$, let
  $y$ be the point on $(\xi, \eta)$ 
  which is on the same horosphere at $\xi$ as
  $o$. Then $y$ is bounded distance from $o$ for all $\eta\in K$ by
  compactness of $K$; let $C$ be an upper bound on $d(o,y)$.

  Consider a geodesic triangle with endpoints $\xi,o,$ and $g\eta\in gK$.
  This triangle has an inner triangle 
  with vertices $a\in
  [o,\xi), b\in [o, g\eta),$ and $c\in (\xi, g\eta)$ such that
  $\beta_o(a,b),\beta_\xi(a,c),\beta_{g\eta}(b,c)$ differ by $O(\alpha)$.

  We will first compare $d(o,go)$ with $2d(o,a)$ (notice that
  $a$ of course depends on $g$). Then we will estimate $2d(o,a)$ to
  prove the containments of shadows. Then we will prove the estimate on the
  Busemann functions. 

  Note that since $c$ and $gy$ are both on
  $(\xi, g\eta)$ and $g$ preserves horospheres centered at
  $\xi$, 
  \begin{equation}
    d(gy,c) = |\beta_\xi(gy,c)|
    +O(\alpha)
    = |\beta_\xi(gy,a)|
    +O(\alpha)
    =
     |\beta_\xi(o,a)| 
    +O(\alpha)
     = d(o,a)
     +O(\alpha). 
    \label{eqn:geometriclemma1}
  \end{equation}
  By hyperbolicity of the
  space, there is a uniform constant $O(\alpha)$ which bounds the diameter of
  this inner triangle. Then by the triangle inequality and
  \eqref{eqn:geometriclemma1},
  \[
    d(o,gy) \leq
    d(o,a)+d(a,c)+d(c, gy)\leq
    2d(o,a)+O(\alpha).
  \]
Moreover, let $q$ be the comparable point on a geodesic
  $[o,g\eta)$ to $gy$ on a geodesic $(g\eta,\xi)$; this means
  $q$ is the point in the same horosphere centered at $g\eta$ as the point $gy$.  In particular,
  \[
    d(q,b) = |\beta_{g\eta}(q,b)| +O(\alpha) =
    |\beta_{g\eta}(gy,c)|+O(\alpha) =
    d(gy,c) +O(\alpha) = d(o,a) +O(\alpha) = d(o,b) +O(\alpha)
  \]
  hence $d(o, q) = 2 d(o, a)+O(\alpha)$, and since $q$ is comparable to $gy$, their distance is bounded above
  by $O(\alpha) $ by Corollary \ref{C:inner-tr}.
  Then we obtain a lower bound
  \[
    2d(o,a)-O(\alpha) \leq d(o,q)-d(gy,q)\leq
    d(o,gy).
  \]
  Then by the triangle inequality, the fact that $g$ is an isometry, and
  the upper bound on $d(o,y)$,
  \begin{equation}
    2d(o,a)-C-O(\alpha)\leq d(o,go)\leq 2d(o,a)+C+O(\alpha).
    \label{eqn:distpo}
  \end{equation}
  Noting that
  $\langle
  \xi,g\eta\rangle_o=d(o,a)+O(\alpha)=\frac12d(o,go)$ implies $|\langle
  \xi,g\eta\rangle_o-\frac12 d(o,go)| \leq A$ where $A=C+O(\alpha)$. 
  By Definition \ref{D:shadow_depth_t}, the containments of shadows in (1)
  follows.

  Let us now prove the bound on the Busemann functions in (2). By the quasi-cocycle property,
\begin{equation}
\beta_{g\eta}(o, go) = \beta_{g\eta}(o, \xi_t) + \beta_{g\eta}(\xi_t,
g\xi_t) + \beta_{g\eta}(g \xi_t, go) + O(\alpha).
  \label{E:cocycle}
\end{equation}

Now, assume that $d(o,go)>2t$. Then $g\eta \in V(o, \xi, t-A)$, so we have
by Lemma \ref{L:beta-level}
$$\beta_{g\eta}(o, \xi_t) = t + O(\alpha).$$
Moreover, since the group acts by isometries,
$$\beta_{g\eta}(g \xi_t, go) = \beta_{\eta}(\xi_t, o).$$
Further, by compactness we can choose a constant $D$ such that
$K$ is disjoint from $V(o, \xi, D)$. Hence, by Lemma \ref{L:beta-level} and
the antisymmetric and 1-Lipschitz properties of Busemann functions, 
$\eta \in K$ implies
$$t - 2D-O(\alpha) \leq \beta_{\eta}(\xi_t, o) \leq t.$$
Finally, since $q$ lies on $[o, g \eta)$,
$$\beta_{g \eta}(o, q) = d(o, q)+O(\alpha)$$
and, as discussed before, 
$$d(q, go) \leq d(q, gy) + d(gy, go) \leq O(\alpha) + C$$
hence

$$\left| \beta_{g \eta}(o, go) - d(o, go) \right| \leq O(\alpha) + 2 C$$
which yields by Equation \ref{E:cocycle} and the preceding equations
$$\left| \beta_{g \eta}(\xi_t, g\xi_t) - d(o, go) + 2t \right| \leq B$$
for a suitable choice of $B$, as required.

Now assume $d(o,go)\leq 2t$. Then $g\eta\not\in V(o,\xi,2t-A)$ so up to
bounded error, $\xi_t$ is closer to $\xi$ than $a$. Since $g$ fixes
$\xi$ and horoballs are coarsely invariant,  
\begin{align*}
|\beta_{g\eta}(\xi_t,g\xi_t)|\leq d(\xi_t,g\xi_t)\leq
d(a,ga)+O(\alpha)=d(a,b)+O(\alpha) \leq O(\alpha).
\end{align*}
\end{proof}

\section{Quasi-conformal densities and estimates near the cusps} \label{S:conformal}

In this section, we will introduce the background on quasi-conformal
densities and prove several key lemmas for the global shadow
lemma. 

\subsection{Background on quasi-conformal densities}
\label{S:quasi_conformal_background}

Let $(X, d)$ be a proper hyperbolic metric space and $\Gamma < \textup{Isom}(X, d)$ acting properly discontinuously on $X$. 
Then a {\em quasi-conformal density of dimension} $\delta>0$ is a family
$\{\mu_x\}_{x\in X}$ of mutually absolutely continuous finite non-trivial
measures on $\partial X$ with the following properties:
\begin{itemize}
  \item (quasi-$\Gamma$-invariance) 
  there exists $C > 0$ such that for all $\gamma\in\Gamma$, $x\in X$, and a.e. $\eta \in \partial X$, 
    we have 
\begin{equation}
    C^{-1} \leq \frac{d \gamma_\ast \mu_x}{ d \mu_{\gamma x}}(\eta) \leq C;
  \label{E:quasi-inv-meas}
\end{equation}
    
  \item (transformation rule) there exists $C > 0$ such that, for all $x,y\in X$ and a.e. $\eta\in \partial X$, we have
    \begin{equation}\label{E:transform}
      C^{-1} e^{-\delta \beta_\eta(x,y)} \leq \frac{d\mu_x}{d\mu_y}(\eta) \leq C e^{-\delta \beta_\eta(x,y)}.
 \end{equation} 
 \end{itemize}

 If $C$ can be chosen equal to $1$, then the density is called a {\em conformal density}.
 A measure $\mu$ is a $\delta$-(quasi-)conformal measure if
 $\mu=\mu_{x}$ for some 
 (quasi-)conformal density $\{\mu_x\}_{x\in X}$ of dimension $\delta$ (see  in 
   \cite[Proposition 2.5]{MYJ} that this definition agrees with 
   the original definition of quasi-conformal measure, as in 
 \cite[Definition 4.1]{CoornaertMichel1993}).
 Note that 
 any quasi-conformal measure with support contained in $\Lambda_\Gamma$
 must in fact have support equal to $\Lambda_\Gamma$, because 
 quasi-$\Gamma$-invariance and the transformation rule imply that the
 support is a $\Gamma$-invariant set, and $\Lambda_\Gamma$ is the smallest
 closed $\Gamma$-invariant set. 

A particularly famous example of a conformal density is the
{\em Patterson--Sullivan density}, first constructed by Patterson for Fuchsian
groups and extended by Sullivan to geometrically finite actions on
hyperbolic spaces (\cite{patterson, sullivan79, sullivan84}). 
We call any density (measure) constructed in such a way a {\em
Patterson--Sullivan density (measure)}.
A Patterson--Sullivan density, if it exists, has conformal dimension equal
to the critical exponent 
$\delta_\Gamma$,
which is also the critical exponent of the Poincar\'e series. 

  The Patterson--Sullivan construction
  has been generalized by Coornaert
  \cite{CoornaertMichel1993} to any non-elementary group $\Gamma$ of isometries of
  $X$ when $(X,d)$ is  a proper 
  hyperbolic metric space and $\delta_\Gamma$ is finite. 
  Coornaert showed under these assumptions 
  that there exists a Patterson--Sullivan density 
  on $\Lambda_\Gamma$ \cite[Th\'eor\`eme
  5.4]{CoornaertMichel1993}. 
  Coornaert recovers Sullivan's shadow lemma \cite{sullivan79} in this setting
  \cite[Proposition 6.1]{CoornaertMichel1993} for a quasi-conformal measure
  $\mu$ of any dimension $\delta>0$.
  When the action is geometrically finite, it follows that 
  (1) $\mu$ must have conformal dimension at least $\delta_\Gamma$ \cite[Corollaire
6.6]{CoornaertMichel1993}; (2) the only points in $\Lambda_\Gamma$ that
can have positive $\mu$ mass are parabolic points; 
(3)
$\delta_\Gamma>0$ \cite[Corollaire 5.5]{CoornaertMichel1993}
and 
(4) the set of parabolic points has full 
measure if and only if the Poincar\'e series converges at $\delta$ 
(see e.g. \cite[p. 114]{dop} for the case of Patterson--Sullivan measure and
\cite[Proposition 2.12]{MYJ} in generality). 
When $\Gamma$ is geometrically finite, 
the set of
parabolic points is countable \cite[Lemma 6.9]{bowditch}, 
hence (4) implies any quasi-conformal density of dimension
$\delta>\delta_\Gamma$ has
atomic part on the set of parabolic points, since the Poincar\'e series
must converge at $\delta$.  
On the other hand, (4) implies further that 
$\Gamma$ is of divergence type if and only if
Patterson--Sullivan measure has no atoms. Hence, \cite[Theorem
4.1, Theorem 5.2]{MYJ} 
imply that 
all nonatomic quasi-conformal
measures of dimension $\delta>0$ on $\Lambda_\Gamma$ are ergodic and equivalent
up to bounded Radon-Nikodym derivative. 

The existence of a Patterson--Sullivan measure with no atoms is nontrivial:
see for instance the examples of Dal'bo--Otal--Peign\'e \cite[Section 4]{dop},
which arise from geometrically finite Riemannian manifolds of pinched
negative curvature and which have atoms at parabolic points. 
Patterson--Sullivan density is known to have 
no atoms in some settings, such as for geometrically finite 
Riemannian manifolds with pinched negative curvature and parabolic gap
($\delta_\Pi<\delta_\Gamma$ for all parabolic subgroups $\Pi<\Gamma$)
\cite[Proposition 1]{dop}, for relatively
hyperbolic groups acting on their Cayley graph \cite[Proposition
4.1]{YangWenyuan2013}, and for geometrically finite Hilbert geometries
(discussed in Section~\ref{S:PS_hilbert}).  Note that the Cayley graph is
in general not hyperbolic when the group is relatively hyperbolic, but the
construction can still be adapted \cite{YangWenyuan2013}. 

In our hypotheses, we will study quasi-conformal measures on $\Lambda_\Gamma$ with no atoms. 
One appeal of results stated in this generality is that the proof
is intrinsic to these defining properties, rather than the Patterson
construction.

\subsection{The measure of shadows at parabolic fixed points}

Let us now embark on the proof of our global shadow lemma (Theorem \ref{T:intro-global_shadow_lemma}).
Let us remark that, as discussed in Subsection
\ref{S:quasi_conformal_background},
Coornaert \cite[Proposition 6.1]{CoornaertMichel1993}
proved a version of the shadow lemma for shadows that are centered at
points on the orbit $\Gamma o$ of a given basepoint: all such points belong
to the non-cuspidal part of $X$.  In this paper, we generalize this result
by considering shadows centered at \emph{any} point $\xi_t$, in particular
points that may be far from the orbit $\Gamma o$. 

\begin{lemma}
  Let $(X, d)$ be a proper hyperbolic metric space, $\Gamma$ a 
  group of isometries of $X$ acting properly discontinuously on $X$ and
  $\{\mu_x\}_{x\in X}$ a quasi-conformal density of dimension $\delta$ 
  on $\Lambda_\Gamma$. Fix $o\in C_\Gamma$ and $\xi\in \Lambda_\Gamma$, and let $\alpha$ be the hyperbolicity constant of $X$.  
  Then for all $\eta\in V(o,\xi,t)$ and $t\geq 0$,
  \[
    |\beta_\eta(o,\xi_t)-t|\leq O(\alpha)
  \]
  and thus for all $t\geq 0$ and $-t\leq s\leq 0$, 
  \[
    \mu_{\xi_t}(V(o,\xi,t))\asymp e^{-\delta s}\mu_{\xi_{t+s}}(V(o,\xi,t))
  \]
  with uniform constants, independent of $t$ and $s$.
  \label{L:change_perspective_along_geod}
\end{lemma}

\begin{proof}
 To prove the first part, let $p$ be a closest point projection of $\eta$
onto $[o, \xi)$. By Lemma \ref{L:gp-proj}, since $\eta \in V(o, \xi, t)$ we have
$d(o, p) \geq t+O(\alpha)$, hence $\beta_p(o, \xi_t) = t+O(\alpha)$ and by
Lemma \ref{L:Buse-proj} we have for $-t\leq s\leq 0$,
$$\beta_\eta(o, \xi_{t+s}) = \beta_p(o, \xi_{t+s}) + O(\alpha) = t+s + O(\alpha).$$
The first part implies the second part because, by the transformation rule
of conformal densities and the coarse cocycle property of Busemann
functions, 
\begin{align*}
    \mu_{\xi_t}(V(o,\xi,t)) & \asymp \int_{V(o,\xi,t)}
    e^{-\delta\beta_\eta(\xi_t,\xi_{t+s})}\ d\mu_{\xi_{t+s}}(\eta)
    \\
   &  \asymp \int_{V(o,\xi,t)}
    e^{
      -\delta (-\beta_\eta(o,\xi_t) + \beta_\eta(o,\xi_{t+s}))
    } \ d\mu_{\xi_{t+s}}(\eta) \\
&    \asymp e^{-\delta s}\mu_{\xi_{t+s}}(V(o,\xi,t))
\end{align*}
where the
cocycle and antisymmetric properties of the Busemann function
are applied in the second equality.
\end{proof}

Let $\Pi$ be a parabolic subgroup of $\Gamma$. 
In the following propositions we will use the following ``counting functions": for $t \geq 0$
$$f_\Pi(t) := \sum_{\substack{g\in\Pi
   \\d(o,go)\geq 2t}}
   e^{-\delta d(o,go)+\delta t}$$
 and 
$$f_\Pi^c(t) := \#\{ g\in\Pi \ : \ d(o,go) \leq 2t \} e^{-\delta t}.$$

\begin{lemma} \label{L:geom_estimate}
  Let $(X,d)$ be a proper hyperbolic metric space and $\Gamma$ a geometrically finite group 
  of isometries of $X$. 
  Assume $\{\mu_x\}_{x\in X}$ a quasi-conformal density of dimension
  $\delta$ on $\Lambda_\Gamma$ with no atoms. 
  Let $\xi$ be a bounded parabolic point in $\Lambda_\Gamma$  with stabilizer the parabolic subgroup $\Pi$, and $o\in X$. 
  Let $\xi_t$ be a point on a geodesic ray $[o,\xi)$ at distance $t$ from $o$. 
  Then there exist constants $A$ and $C$ depending on $\xi$ and $o$ such that for all $t>A$, 
  \[
 C^{-1}  \mu_{\xi_t}(V(o,\xi,t+A))  \leq   f_\Pi(t)  \leq C \mu_{\xi_t}(V(o,\xi,t-A)), 
 \]
 and 
 \[
   C^{-1}  \mu_{\xi_t}(\partial X\smallsetminus V(o,\xi,t-A))  \leq
   f_\Pi^c(t)
    \leq C \mu_{\xi_t}(\partial  X \smallsetminus V(o,\xi,t+A)). 
  \] 
  \end{lemma}

\begin{proof}
  First let us show the upper bound. By Lemma \ref{L:geometriclemma}(1),
  there is a constant $A$ such that for all $t>A$, 
  \[
    \bigcup_{\substack{g\in \Pi\\d (o,go)\geq2t}} gK \subset V(o,\xi,t-A)
   \]
  where $K$ is a compact fundamental domain for the action of the parabolic
  subgroup $\Pi$ on $\Lambda_\Gamma\smallsetminus\{\xi\}$ given by the
  definition of bounded parabolic. 
  Since  $\Pi$ acts on $\Lambda_\Gamma\smallsetminus\{\xi\}$ properly discontinuously, 
  every point of $\Lambda_\Gamma\smallsetminus\{\xi\}$ is contained in 
  finitely many translates of $K$, hence for some integer $M$
  \begin{equation}
    \sum_{\substack{g\in\Pi \\d (o,go)\geq 2t}} \mu_{\xi_t}(gK) \leq
    M \mu_{\xi_t} \left(
      \bigcup_{\substack{g\in\Pi \\ d (o,go)\geq2t}} gK
    \right)\leq
    M\mu_{\xi_t}(V(o,\xi,t-A)). 
    \label{eqn:estshadow}
  \end{equation}
  Moreover, Lemma \ref{L:geometriclemma}
  gives us control over
  $\beta_{g\eta}(\xi_t,g\xi_t)$ for all such
  $g\in\Pi$ and all $\eta\in K$:
  applying the defining properties of a quasi-conformal density,
  \begin{align}
    \label{eqn:estpK}
    \mu_{\xi_t}(gK) & 
    = \int_{gK} 
    \frac{d\mu_{\xi_t}}{d\mu_{g\xi_t}}(\lambda) 
   \ d\mu_{g\xi_t}(\lambda)
    \asymp \int_{gK} e^{-\delta \beta_\lambda(\xi_t,g\xi_t)}\ d\mu_{g\xi_t}(\lambda)
    \\
    \intertext{and, using Lemma \ref{L:geometriclemma}(2),
    }
  &   \asymp\int_{gK} e^{-\delta(d (o,go)-2t)}\ d\mu_{g\xi_t}(\lambda)
    =e^{-\delta d (o,go)+2\delta t} \mu_{g\xi_t}(gK)
    \\
     & \asymp e^{-\delta d (o,go)+2\delta t} \mu_{\xi_t}(K).
  \end{align}
 Since $K$ is compact and disjoint from $\xi$, there is a constant $D$ such
 that $K$ is disjoint from $V(o, \xi, D)$, hence by Lemma \ref{L:beta-level}, for $t$ sufficiently large
  and $\eta'\in K$, 
    \[
    t- 2 D - O(\alpha) =d (o,\xi_t)- 2 D - O(\alpha) \leq \beta_{\eta'}(\xi_t, o) \leq t.
  \]
  Then another computation using the defining properties of a conformal
  density gives 
  \begin{equation}
    \mu_{\xi_t}(K)\asymp e^{-\delta t}\mu_{o}(K). 
    \label{eqn:estpK2}
  \end{equation}
  Since $K$ is a fundamental domain for the action of the countable group
  $\Pi$ on $\Lambda_\Gamma\smallsetminus\{\xi\}$, and the quasi-conformal
  densities are absolutely continuous by definition and nonatomic by
  assumption, $\mu_o(K)$ is some positive constant, so we obtain a constant $A'$ independent of $t$ such that
  \[
    \frac1{A'}
    f_\Pi(t) \leq \mu_{\xi_t}(V(o,\xi,t-A)).
  \]
  The argument for the lower bound is similar. 
 Next, by Lemma
  \ref{L:geometriclemma}(1), and using that $K$ is a fundamental domain for
  the parabolic subgroup acting on $\Lambda_\Gamma\smallsetminus \{\xi\}$, 
  there is a constant $A$ such that
  \[
  \big(\Lambda_\Gamma\cap V(o,\xi,t+A) \big)\smallsetminus \{\xi\} \subset
    \bigcup_{\substack{g\in\Pi\\d (o,go)\geq 2t}} gK.
  \]
  Then by subadditivity and since the quasi-conformal density is supported
  on $\Lambda_\Gamma$ with no atomic part,  
  \[
    \mu_{\xi_t}(V(o,\xi,t+A))\leq
    \sum_{\substack{g\in\Pi\\d (o,go)\geq 2t}}
    \mu_{\xi_t}(gK).
  \]
  Now, by applying the estimates from Equations \eqref{eqn:estpK} and
  \eqref{eqn:estpK2}, and adjusting the previous constant $A'$ if needed, we
  have 
  \[
    \mu_{\xi_t}(V(o,\xi,t+A)) \leq A' f_\Pi(t). 
  \]
  The estimate for the complement of the shadow is similar and uses Lemma
  \ref{L:geometriclemma} as well, hence the proof is omitted. For more
  details, see \cite[Proposition 3.6]{schapira04}. 
\end{proof}

\begin{lemma}  \label{L:shadowlem_at_parabolic_pts}
 Let $(X,d)$ be a proper hyperbolic metric space and $\Gamma$ a geometrically finite  
 group of isometries of $X$. 
 Assume $\{\mu_x\}_{x\in X}$ is a quasi-conformal density of dimension
 $\delta$ on $\Lambda_\Gamma$ with no atoms. 
For any bounded parabolic fixed point $\xi$ whose stabilizer $\Pi$ has $\delta$-tempered growth, and any
  $o\in X$, there exists a constant $C$ (note that it depends on all the above) such that for all
  $\xi_t$ on a geodesic ray $[o,\xi)$ distance $t$ from $o$,
  \[
   C^{-1}
    B_\Pi(2t) e^{-\delta t} 
    \leq \mu_{\xi_t}(V(o,\xi,t))\leq C 
    B_\Pi(2t) e^{-\delta t} 
  \]
  and
  \[
    C^{-1}
    B_\Pi(2t) e^{-\delta t} 
    \leq \mu_{\xi_t}(\partial X \smallsetminus V(o,\xi,t))\leq C
    B_\Pi(2t) e^{-\delta t} . 
  \]
 \end{lemma}

Lemma \ref{L:shadowlem_at_parabolic_pts} is the crucial point where we assume tempered growth to prove that $f_\Pi(t)\asymp f_\Pi^c(t)$.
Note that, by summing the two equations above, we obtain that the total mass of the measure $\mu_{\xi_t}$ grows like $B_\Pi(2t) e^{-\delta t}$. This is possible because $\xi_t$ is far from the orbit $\Gamma o$. 

\begin{proof}
  Note that we may prove the claim for all $t$ sufficiently large 
  since by adjusting constants, the claim then applies to all $t\geq 0$. 
  Let us write $\Pi$ as the disjoint union 
  \[
    \Pi=\bigcup_{n\in\mathbb N} \{ g\in\Pi : d (o,go)\in[Rn-R,Rn) \}
  \]
and denote for any $n \geq 1$
$$a_n := \#\{ g \in \Pi \ : \ d (o, go) \in [Rn-R, Rn)\}.$$
First, we claim that, by choosing $R$ large enough, we can make sure that 
\begin{equation} \label{E:E3}
\limsup_{n \to \infty} \frac{1}{R} \log \frac{a_{n+1}}{a_n} < \delta.
\end{equation}
Indeed, since $\delta_\Pi^->0$, 
for any $\epsilon > 0$, there exists an $r$ such that for $R\geq r$, 
we have $B_\Pi(T-R) \leq (1-\epsilon) B_\Pi(T)$ for all $T>R$. 
Then  
$$\frac{1}{R} \log \frac{a_{n+1}}{a_n}  = \frac{1}{R} \log \frac{B_\Pi(n R +R)-B_\Pi(n R)}{B_\Pi(n R)-B_\Pi(n R-R)} \leq \frac{1}{R} \log \frac{B_\Pi(n R+R)}{\epsilon B_\Pi(n R)} \leq \frac{1}{R} \log \frac{B_\Pi(n R+R)}{B_\Pi(n R)} + \frac{\log(1/\epsilon)}{R}$$
so, since $\delta_\Pi^+<\delta$, by choosing $R$ large enough, we make sure the right hand side is $< \delta$.
Second, we show that 
$$\sup_n \sum_{k \geq 0} \frac{a_{n+k}}{a_n} e^{- \delta R k} < \infty.$$
Indeed, from \eqref{E:E3} there exists $\delta' < \delta R$, $C > 0$ and $N$ such that 
$$\frac{a_{n+1}}{a_n} \leq e^{\delta'} \qquad  \forall n \geq N \qquad \textup{ and } \frac{a_{n+1}}{a_n} \leq C \qquad \forall n \leq N.$$
Thus, 
$$\sum_{k \geq 0} \frac{a_{n+k}}{a_n} e^{- \delta R k}  = \sum_{k \geq 0} \prod_{j = 1}^k \frac{a_{n+j}}{a_{n+j-1}}  e^{- \delta R k} 
\leq C^N \sum_{k \geq 0} e^{(\delta' - \delta R) k} < \infty
$$
as desired. Then for $t$ sufficiently large, a short calculation gives 
\begin{align*}
    \sum_{d (o,go)\geq 2t} e^{-\delta d (o,go)+\delta t} 
    & \asymp e^{\delta t}\sum_{Rn-R\geq 2t} \sum_{\substack{
      d (o,go)\in\\ [Rn-R, Rn)
      }
    } e^{-\delta
    Rn}  \\
    & 
    = e^{\delta t}\sum_{Rn-R\geq 2t}  a_{n} e^{-\delta Rn}  \\
    \intertext{and, setting $n_0 := \lceil \frac{2t}{R} + 1\rceil$, we have }
    & 
    = e^{\delta t} a_{n_0} e^{-\delta R n_0} \sum_{n \geq n_0} \frac{a_n}{a_{n_0}} e^{-\delta R (n-n_0)}
    \intertext{and, using that $n_0 R = 2 t +O(1)$ and $a_{n_0} \asymp B_\Pi(2t)$, }
    & \asymp B_\Pi(2t) e^{- \delta t} 
\end{align*}
thus 
\begin{equation} \label{E:new-growth}
f_\Pi(t)  \asymp B_\Pi(2 t) e^{- \delta t}.
\end{equation}
Finally, by Lemma \ref{L:geom_estimate}, 
  \[
    \mu_{\xi_{t}}(V(o,\xi,t-A)) \geq C^{-1} B_\Pi(2t)e^{-\delta t}.
 \]
  An analogous argument for the upper bound gives
  \[
    \mu_{\xi_t}(V(o,\xi,t+A))\leq CB_\Pi(2t) e^{-\delta t}.
  \]
 Then by the transformation rule and using that 
  $|\beta_\eta(\xi_t,\xi_{t\pm A})| \leq \pm A$
 we compare $\mu_{\xi_{t\pm A}}(V(o,\xi,t))$ with $\mu_{\xi_t}(V(o,\xi,t))$ to conclude 
  \[
    C^{-1} e^{-\delta A} B_\Pi(2(t+A))e^{-\delta t}\leq
    \mu_{\xi_t}(V(o,\xi,t))\leq 
    C e^{-\delta A} B_\Pi(2(t-A))e^{-\delta t}
  \]
  and the result follows from the fact that $B_\Pi$ is nondecreasing. 

To estimate the complement of the shadow, Lemma \ref{L:geom_estimate} 
immediately yields, by definition, 
   \[
    \sum_{d (o,po)\leq 2t} e^{-\delta t} =  \#\{ g\in\Pi \ : \ d(o,go) \leq 2t \} e^{-\delta t} = B_\Pi(2 t ) e^{- \delta t} 
  \]
 from which the claim follows.
\end{proof}

\subsection{Uniform control over all parabolic fixed points}

Note that so far, the constants depend on a particular parabolic point $\xi$. 

Recall by \cite[Proposition 6.15]{bowditch} that there are finitely many orbits of
parabolic points, hence we express the set of parabolic points $\mathcal P$
as a disjoint union of these orbits $\mathcal P^1,\ldots,\mathcal P^a$. For
each $i=1,\ldots, a$, pick $p_i\in\mathcal P^i$ with stabilizer $\Pi_i$,
and denote 
\[
  B_i(t):=B_{\Pi_i}(t). 
\]
Moreover, we choose a quasi-invariant horoball decomposition
$\{H_\xi\}_{\xi\in \mathcal P}$ of $X$ as given by Proposition
\ref{P:bowditch_horoball_decomposition}. 
We now prove a version of the previous lemma where the constants no longer depend on the particular parabolic point chosen. 

\begin{lemma} \label{L:shadow-parabolic}   \label{L:shadowlem_orbits_parabolics}
  Let $(X,d)$ be a proper hyperbolic metric space and $\Gamma$ a geometrically
  finite group of isometries of $X$.   
  Assume $\{\mu_x\}_{x\in X}$ is a quasi-conformal density
  of dimension $\delta$ on $\Lambda_\Gamma$ with no atoms,
  and that $\Gamma$ has $\delta$-tempered parabolic subgroups. 
  Fix a basepoint $o \in X$ and $i$ with $1 \leq i \leq a$.
For $\xi\in\partial X$, let $\xi_t$ denote a point on a 
  geodesic ray $[o,\xi)$ at distance $t$ from $o$. 
  Then, there exists a constant $C$ such that for all
  $\xi\in\mathcal P^i$ and all times $t > 0$ such that  $\xi_t\in H_\xi$, we have 
  \[
    C^{-1} 
    B_i(2 d( \xi_t,\Gamma o) ) e^{-\delta d( \xi_t,\Gamma o)}
    \leq \mu_{\xi_t}(V(o,\xi,t))\leq C 
    B_i(2 d( \xi_t,\Gamma o) ) e^{-\delta d( \xi_t,\Gamma o)}
  \]
 and 
  \[
    C^{-1} 
    B_i(2 d( \xi_t,\Gamma o) ) e^{-\delta d( \xi_t,\Gamma o)}
    \leq \mu_{\xi_t}(\partial X \smallsetminus V(o,\xi,t))\leq C 
    B_i(2 d( \xi_t,\Gamma o) ) e^{-\delta d( \xi_t,\Gamma o)}. 
  \]
\end{lemma}

\begin{proof}
  Let $\eta=p_i$, a fixed element of $\mathcal P^i$. Let $\xi_s$ be the intersection of $[o,\xi)$ with $\partial H_\xi$. 
  Similarly, let $\eta_{s'}$ be the intersection of $[o,\eta)$ with $\partial H_\eta$. 
  Since the non-cuspidal part is quasi-$\Gamma$-invariant (Proposition \ref{P:bowditch_horoball_decomposition}), 
  any group element $\gamma$ for which $\gamma \eta =\xi$ also takes $H_{\eta}$ within distance $O(\alpha)$ of $H_\xi$. 
  Hence, for any such $\gamma$, we have that $\gamma \eta_{s'}$
  and $\xi_s$ are both within distance $O(\alpha)$ of the $\partial H_\xi$. 
  Since the parabolic stabilizer of $\xi$ acts cocompactly on 
  $\partial
  H_\xi\cap C_\Gamma$, 
  and we can choose a fundamental domain with diameter uniformly bounded
  over all translates of $\eta$, we can choose a particular $\gamma$ such
  that $\gamma \eta_{s'}$ and $\xi_s$ are uniformly bounded distance apart. Denote this bound by $M$, and thus
  \begin{equation}
    d(\xi_{s},\gamma o)
    \leq d(\xi_{s},\gamma \eta_{s'})+d(\gamma
    \eta_{s'}, \gamma o) 
    \leq M+s' =: M'. 
    \label{eqn:final_lemma1}
  \end{equation}
  Then since geodesic rays meeting at the same boundary point $\xi$ are
  asymptotic in a hyperbolic metric space (Lemma \ref{L:asymp}),  
  \[
    d ( \xi_t,\gamma \eta_{t-s})\leq d(\xi_{s},\gamma
    \eta_0)+O(\alpha)=
    d(\xi_{s},\gamma o)+O(\alpha) \leq M'+O(\alpha)
  \]
  as well, and by quasi-conformality of the measures 
  (Equation \eqref{E:transform}) and since 
  Busemann functions are 1-Lipschitz, for any measurable set
  $E\subset \partial X$, 
  \begin{equation}
    \label{eqn:final_lemma2}
    \mu_{\xi_t}(E)\asymp \mu_{\gamma \eta_{t-s}}(E). 
  \end{equation}
  On the other hand, Equation \eqref{eqn:final_lemma1} suffices to apply 
  Lemma \ref{L:technical_shadow_containments}(2); for all points such as $\xi_s$ 
  and $\gamma o$ which are
  bounded distance, there is a constant $C$ depending on this bound
  such that 
  \begin{equation}
    V(\gamma o,\xi,t-s+C) \subset V(\xi_s,\xi,t-s) \subset
    V(\gamma o,\xi, t-s-C),
    \label{eqn:final_lemma3}
  \end{equation}
  and the containments apply in reverse to the complementary shadow. 
 It follows from the definition that there exists a positive $t_0 = O(\alpha)$ such that 
  for any $t \geq s +t_0$ we have 
  $$V(\xi_s, \xi, t - s + t_0) \subseteq V(o, \xi, t) \subseteq V(\xi_s, \xi, t - s - t_0).$$
  Hence  applying Equation \eqref{eqn:final_lemma2},  Equation \eqref{eqn:final_lemma3},  
  quasi-$\Gamma$-invariance of the conformal measures, $\Gamma$-equivariance of the shadows, and Lemma
  \ref{L:shadowlem_at_parabolic_pts} (it does apply because $\xi_t$ is in $H_\xi$, so $t-s$ is positive), 
  \begin{align*}
    \mu_{\xi_t}(V(o,\xi,t)) & \asymp \mu_{\xi_t}(V(\xi_s,\xi,t-s))
    \asymp 
    \mu_{\gamma
      \eta_{t-s}}(V(\gamma o,\gamma \eta,t-s)) 
    \\
      & \asymp
      \mu_{\eta_{t-s}}(V(o, \eta, t-s))\asymp
      B_i(2(t-s))e^{-\delta(t-s)},
  \end{align*}
and again, with similar expressions for the complementary shadow. 
To conclude the proof, see that $t-s+O(\alpha)$
is the distance of $\xi_t$ to $\partial H$ by Lemma
\ref{L:horoball_projection}, which is equal to $d(\xi_t,\Gamma o)$ up to uniform additive constants. 
\end{proof}

\section{Proof of the global shadow lemma}  \label{S:proof-global}

In this section, we complete the proof of the first main result. 
Recall the quasi-invariant horoball decomposition
$\{H_\xi\}_{\xi\in \mathcal P}$ of $X$ as given by Proposition
\ref{P:bowditch_horoball_decomposition} and the decomposition $\mathcal
P=\mathcal P^1\cup\cdots\cup\mathcal P^a$ into the finitely many distinct
orbits of parabolic points in $\partial X$.
Define the {\em $i$th cuspidal part} of $X$ to be 
\[
  X_c^i:=\bigcup_{p\in \mathcal P^i} H_p\cap C_\Gamma.
\]

Recall for each $i$ we choose $p_i\in\mathcal P^i$ with stabilizer
$\Pi_i$, and denote $B_i :=B_{\Pi_i}$. 
Define $b\colon C_\Gamma\to\mathbb R$ by
\begin{equation}
  \label{E:master_B}
  b(x) := \left\{ \begin{array}[]{ll}
      1 & \text{ if }x\in X_{nc} \\ 
      B_i(2d(x,\Gamma o)) & \text{ if }x\in X_c^i. 
  \end{array} \right.
\end{equation}

The main result in full generality is:

\begin{theorem} \label{T:global_shadow_lemma}
    Let $(X,d)$ be a proper hyperbolic metric space and $\Gamma$ a
    geometrically finite group of isometries of $X$. 
  Assume $\{\mu_x\}_{x\in X}$ is a quasi-conformal density
  of dimension $\delta$ on  $\Lambda_\Gamma$ 
  with no atoms, and that $\Gamma$
  has $\delta$-tempered parabolic subgroups. 
  Let $o \in C_\Gamma$, and let $\xi_t$ denote the point on a geodesic ray
  from $o$ to $\xi$ which is distance $t$ from $o$.
Then there exists a constant $C$ such that for any
  $\xi\in \Lambda_\Gamma$ and any $t > 0$ we have 
  \[
    C^{-1} b(\xi_t) e^{-\delta (t + d(\xi_t, \Gamma o))}  
    \leq
    \mu_o(V(o,\xi,t))\leq 
    C b(\xi_t) e^{-\delta (t + d(\xi_t, \Gamma o))}. 
  \]
\end{theorem}

Theorem \ref{T:global_shadow_lemma} is the same as \ref{T:intro-global_shadow_lemma} from the introduction. 

\subsection{Shadows in the non-cuspidal part}

Let us start proving a weak form of the shadow lemma, as in \cite[Proposition 6.1]{CoornaertMichel1993}.

\begin{lemma} \label{L:thickpart}
Let $(X, d)$ be a hyperbolic metric space and $\Gamma$ a
    geometrically finite group of isometries of $X$. Let $\{\mu_x\}_{x\in X}$ be a quasi-conformal density
  of dimension $\delta$ on  $\Lambda_\Gamma$ with no atoms. 
  Then for any $t_0 > 0$ there is a constant $C > 0$ 
such that for all $x$ in $X_{nc}$,  and any $\xi \in \Lambda_\Gamma$, 
  \[
    C^{-1} \leq \mu_x (V(x,\xi, t_0)) \leq C.
  \]
\end{lemma}

\begin{proof}
  Every point in the non-cuspidal part $X_{nc}$ is uniformly bounded distance from the
  $\Gamma$-orbit of $o$ for any fixed point $o$ in $X_{nc}$. Let
  $\gamma o$ be some closest point to $x$ which is in the $\Gamma$-orbit of
  $o$. Then by
  quasi-$\Gamma$-invariance of the measures and equivariance of shadows,
  \[
    \mu_x(V(x,\xi,t_0))\asymp
    \mu_{\gamma o}(V(\gamma o,\xi,t_0) ) = \mu_o(V(o,\xi',t_0))
  \]
  where $\xi'=\gamma^{-1}\xi$ varies over $\Lambda_\Gamma$. 
  
First, we claim that there exists $t >0$ such that for any $\xi, \eta \in \Lambda_{\Gamma}$, 
if $\eta \in V(o, \xi, t)$ then $V(o, \xi, t_1) \subseteq V(o, \eta, t_0)$.

Indeed, $\zeta \in V(o, \xi, t)$ implies $\langle\xi, \zeta \rangle_o \geq
t$. Moreover, if $\eta \in V(o, \xi, t)$ then $\langle\eta, \xi \rangle_o \geq t$,  
hence by Equation \eqref{E:shadow-ball} and the fact that the Gromov metric
is a metric, one gets
\begin{equation*}
  \label{E:compact_shadow_lem1}
  c^{-1}e^{-\epsilon \langle \zeta,\eta\rangle_o}\leq d_{\partial
  X}(\zeta,\eta)\leq d_{\Lambda_\Gamma}(\zeta,\xi)+d_{\Lambda_\Gamma}(\xi,\eta)\leq
  c e^{-\epsilon \langle \zeta,\xi \rangle_o}+ce^{-\epsilon \langle
  \xi,\eta\rangle_o}\leq 2ce^{-\epsilon t}. 
\end{equation*}
Thus, $\langle \zeta,\eta\rangle_o\geq t-\frac{\log(2c^2)}\epsilon\geq t_0$ by taking $t$ large enough, which proves the claim. 

Then by compactness we can cover $\Lambda_\Gamma$ with finitely many shadows of type
$V(o, \xi_i, t)$ for $i = 1, \dots, k$. Now note that, since the support of $\mu_o$ is $\Gamma$-invariant by quasi-conformality 
and the action of $\Gamma$ on $\Lambda_\Gamma$ is minimal, then $\mu_o$ has full support on $\Lambda_\Gamma$. 
Then we have
$$C:= \inf_i \mu_o(V(o, \xi_i, t)) > 0.$$

Now, let $\xi \in \Lambda_\Gamma$. Then there is a $\xi_i$ such that $\xi \in V(o, \xi_i, t)$, hence by the above claim 
we have $V(o, \xi_i, t) \subseteq V(o, \xi, t_0)$, so 
$$\mu_o(V(o, \xi, t_0)) \geq  \mu_o(V(o, \xi_i, t)) \geq C.$$  
  
Now the upper bound is clear, since $\mu_o$ is a finite measure. 
\end{proof}

\begin{figure}[h!]
  \centering
   \begin{tikzpicture}[scale=1.7]
    \def\s{1.5}
    \draw[line width=.5em, 
    opacity=.3] (200: 2) arc
    (200:280:2 );
    \draw (0,0) circle (2); 
    \draw (0,-1) circle (1); 
    \coordinate (xii) at (0,-2); 
    \draw[dashed] (0,-.75) circle (1.25); 
    \draw[draw=none] (240:2) coordinate (xi') -- (50:2) coordinate (xi)
    coordinate[pos=.525] (o') 
    coordinate[pos=.3] (xit) coordinate[pos=.038] (o); 
    \coordinate (gamo) at (50:.58);
    \draw (o)--(xi);
    \draw (gamo) 
    -- (xii) coordinate[pos=.20]
    (gamx) coordinate[pos=.55] (gamxis);

    \node[circle,fill,inner sep=\s] at (xii) {}; 
    \node[below] at (xii) {$\eta$};
    \node[circle,fill,inner sep=\s] at (xi) {};
    \node[above right] at (xi) {$\xi$};
    \node[circle,fill,inner sep=\s] at (xit) {};
    \node[circle,fill,inner sep=\s] at (o') {};
    \node[above left] at (o') {$o'$};
    \node[above left] at (xit) {$\xi_t$};
    \node[circle,fill,inner sep=\s] at (o) {};
    \node[above left, xshift=-.4em] at (o) {$o$};
    \node[circle,fill,inner sep=\s] at (gamo) {};
    \node[above] at (gamo) {$\gamma o$};
    \node[circle,fill,inner sep=\s] at (gamx) {};
    \node[right,yshift=.3em,xshift=.2em] at (gamx) {$\gamma x$};
    \node[circle,fill,inner sep=\s] at (gamxis) {};
    \node[right, xshift=.2em] at (gamxis) {$\gamma \eta_{s}$};

    \node at (-2.3,-1.2) {$V(o',o,t')$};
    \node at (-1.8,1.5) {$X$};
    \node at (.6,-1.5) {$H_\eta$};
  \end{tikzpicture}
   \caption{
    Case 2 in the proof of Theorem \ref{T:global_shadow_lemma}.
  }
  \label{fig:final_case_of_shadowlemma}
\end{figure}

\subsection{Proof of Theorem \ref{T:global_shadow_lemma}}

With all the ingredients established so far, the proof now follows quite closely the strategy of \cite{schapira04}.

\begin{proof}[Proof of Theorem \ref{T:global_shadow_lemma}]
  First, by Lemma \ref{L:change_perspective_along_geod} comparing
  $\mu_{\xi_t}(V(o,\xi,t))$ with $\mu_{\xi_0}(V(o,\xi,t)) =
  \mu_o(V(o, \xi,t))$, it suffices to show that there is a constant $C$ such
  that 
  \begin{equation} \label{E:final}
    C^{-1} b(\xi_t) e^{-\delta d(\xi_t,\Gamma o)}
    \leq \mu_{\xi_t}(V(o,\xi,t)) 
    \leq C b(\xi_t) e^{-\delta d(\xi_t,\Gamma o)}. 
  \end{equation}
  The case where $\xi_t$ is in $X_{nc}$ now follows from Lemma \ref{L:thickpart}; 
 from the definition of shadows, there exists $t_0 = O(\alpha)$ such that 
  \[
    V(o,\xi, t ) \supseteq V(\xi_{t},\xi,t_0 )
  \]
 so Lemma \ref{L:thickpart} applied with $x = \xi_t$ gives the estimate 
\[
    \mu_{\xi_t}(V(o,\xi,t)) \asymp \mu_{\xi_t}(V(\xi_t, \xi,  t_0)) \asymp 1. 
  \]
  The conclusion follows for $\xi_t$ in $X_{nc}$, since all such
  $\xi_t$ are bounded distance from $\Gamma o$ and $b(\xi_t)=1$. 

   It remains to consider the case where $\xi_t$ is in the cuspidal part of $ X$. 
  Let $i$ be such that $\xi_t\in X_c^i$. We denote as $\eta\in\mathcal P^i$ the
  boundary point of the horoball to which $\xi_t$ belongs.  We have three
  cases. 
 
  \smallskip
  \textbf{Case 1}: If $\eta \in V(o, \xi, t+O(\alpha))$, then by Lemma
  \ref{L:contain}(1)
  $$V(o, \eta, t+O(\alpha)) \subseteq V(o, \xi, t) 
  \textrm{ and }
  V(o,\xi,t)\subseteq V(o,\eta,t-O(\alpha)).$$
  By Lemma \ref{L:fellowtravels},  
  $$|\beta_\zeta(\eta_t, \xi_t)| \leq d(\eta_t, \xi_t) \leq O(\alpha)$$
  hence quasi-conformality yields
  $$ C^{-1} \mu_{\eta_t}(V(o,\eta, t + O(\alpha) )) \leq
  \mu_{\xi_t}(V(o,\xi,t))  \leq C \mu_{\eta_t}(V(o,\eta, t-O(\alpha))) $$
  where $C$ depends on $\alpha$ and the quasi-conformality constant. 
  The lower bound follows by Lemma
  \ref{L:shadow-parabolic} and the fact that $d(\eta_t, \Gamma o) = d(\xi_t, \Gamma o) + O(\alpha)$.

  \smallskip

  \textbf{Case 2}: 
  Suppose that $\eta\not\in V(o,\xi,t-O(\alpha))$. 
  Let us introduce some notation; see Figure
  \ref{fig:final_case_of_shadowlemma} for guidance. 
  Let $o'$ denote the intersection point of a geodesic $[o,\xi)$ 
  with the horosphere $\partial H_\eta$ 
  centered at $\eta$ bounding $X_{nc}$, where $o'$ is closer to
  $\xi$ than $\xi_t$. 
  Let $t'=d(\xi_t,o')$. 
By Lemma~\ref{L:gp-proj}, we see $t'$ is chosen so that, for some
$t_1,t_2$ within $O(\alpha)$ of $t'$, 
  \[
    \partial X \smallsetminus V(o',o,t_1) \subset V(o,\xi,t)\subset
    \partial X \smallsetminus V(o',o,t_2).
  \]
  We will now estimate the measure of the left and right hand side by
  comparing them to complements of shadows centered at $\eta$. 
  Let $\eta_t$ be the point on a geodesic ray $[o,\eta)$ which is distance $t$ from $o$. 
  Let $\gamma$ be an element of the stabilizer of $\eta$ such that
  the geodesic ray $\gamma[o,\eta)$ from $\gamma o$ to $\eta$ 
  intersects the same fundamental domain for the action of the stabilizer of $\eta$ on
  $\partial H_\eta$ as $o'$. Let $x\in [o,\eta)$ be such that $\gamma x$ is the
  intersection of the geodesic $\gamma[o,\eta)$ with the horosphere $\partial H_\eta$. 
  In particular, the distance between 
  $\gamma x$ and $o'$ is uniformly bounded, independently of $\eta$. 

  The Case 2 assumption and Lemma~\ref{L:gp-proj} imply
  $\eta$ is in $V(o',o,t'-O(\alpha))$, 
  and by Lemma \ref{L:contain}(1), there exists $t_3,t_4$ within
  $O(\alpha)$ of $t'$,
  such that 
    \[
    V(o',\eta,t_3)\subset V(o',o,t_1)
  \]
  and
      \[
	V(o',o,t_2)\subset V(o',\eta,t_4).
  \]

  Thus, to estimate $\mu_{\xi_t}(V(o,\xi,t))$, it suffices to estimate 
  $$\mu_{\xi_t}(\partial  X \smallsetminus V(o',\eta,t''))$$ 
  for any $t''$ within $O(\alpha)$ of $t'$.
  In order to do so, set $s = t'+d(o,x)$. 
  Chose geodesic representatives $[o,o']\subset [o,\xi)$ and $[x,\eta)\subset [o,\eta)$. 
  Then $\eta$ is in $V(o',o,t'-O(\alpha))$, so 
  by the fellow traveler property of Lemma \ref{L:fellowtravels},   
  $\xi_t$ and $\gamma \eta_s$, which are the points at time $t'$ along
  $[o,o']$ and $\gamma[x,\eta)$ respectively, are uniformly bounded distance apart. 
  Since $\gamma x$ is close to $o'$, we have 
  \begin{align}
    \label{eqn:l3}
    \mu_{\xi_t}(\partial X \smallsetminus  V(o',\eta,t')) 
    &\asymp \mu_{\gamma \eta_s} (\partial  X \smallsetminus  V(\gamma x,\eta,t'))  \\
    \intertext{and, by shifting perspective along the geodesic, we obtain}
    \label{eqn:l34}
    &\asymp \mu_{\gamma \eta_s} (\partial X
    \smallsetminus V(\gamma
    o,\eta,s))  \\
     \intertext{hence, since $\gamma \eta = \eta$, and by quasi-$\Gamma$-invariance, }  
    \label{eqn:l4}
    & \asymp \mu_{\eta_s}(\partial X \smallsetminus
    V(o,\eta,s)) \\
    \intertext{thus recalling that $\eta\in\mathcal P^i$, we have}
   \label{eqn:l5}
   & \asymp B_i(2d(\eta_s,\Gamma o))e^{-\delta d(\eta_s,\Gamma o)}
    \intertext{by direct application of Lemma
      \ref{L:shadowlem_orbits_parabolics}. Finally
    }
    \label{eqn:l6}
   & \asymp B_i(2d(\xi_t,\Gamma o))e^{-\delta d(\xi_t,\Gamma o)}
    \end{align}
    again because $\xi_t$ is uniformly bounded distance from
    $\gamma\eta_s$.
    Note that the tempered growth property implies that the above estimate
    also holds replacing $t'$ with any $t''$
    within $O(\alpha)$ of $t'$. 
    Recalling that $b(\xi_t)=B_i(2d(\xi_t,\Gamma o))$ 
    yields \eqref{E:final}, thus completing case 2. 

    \textbf{Case 3}: Assume $\eta\in V(o,\xi,t-O(\alpha))\smallsetminus
    V(o,\xi,t+O(\alpha))$. 
    Then there exist times $t_1\leq t_2$ such that  $|t_2-t_1|\leq O(\alpha)$ and 
    $\eta \in V(o,\xi,t_1)$  falls into case 1, and 
    $\eta\in V(o,\xi,t_2)$ falls into case 2.
    Then for all $t\in[t_1,t_2]$, see that $|\beta_{\zeta}(\xi_{t_i},\xi_t)|\leq O(\alpha)$ so by monotonicity of shadows and quasi-conformality, 
    we have
    \begin{align*}
      \mu_{\xi_{t_2}}(V(o,\xi,t_2))
      \leq \mu_{\xi_t}(V(o,\xi,t)) 
      \leq       \mu_{\xi_{t_1}}(V(o,\xi,t_1)).
    \end{align*}
    The result now follows because the times $t_i$ were chosen to satisfy
    cases 1 and 2, and because of the 
    tempered growth assumption and 
    boundedness of $|t-t_i|$. 
  \end{proof}

\subsection{Corollaries of the global shadow lemma}

We now prove Corollaries \ref{C:doubling} and \ref{C:singular_harmonic}
from the introduction.

\begin{proof}[Proof of Corollary \ref{C:doubling}]
By Equation \eqref{E:shadow-ball}, there exists $A > 0$ such that 
$$V(o, \xi, \epsilon^{-1}\log(r^{-1}) + A) \subset  D(\xi, r) \subset 
D(\xi,2r)\subset  V(o, \xi,  \epsilon^{-1}\log(r^{-1}) - A)$$
for any $\xi \in \Lambda_\Gamma$, any $r > 0$. 
By the triangle inequality, for any $t \geq A$
$$|d(\xi_{t-A}, \Gamma o) - d(\xi_{t+A}, \Gamma o) | \leq d(\xi_{t-A},
\xi_{t+A}) = 2A.$$
Then setting $t=\epsilon^{-1}\log(r^{-1})$, 
\[
  1\leq \frac{\mu_o(D(\xi,2r))}{\mu_o(D(\xi,r))} \leq C^2 e^{\delta 4A} 
  \frac{b(\xi_{t-A})}{b(\xi_{t+A})}. 
\]
If $\xi_{t-A}$ and $\xi_{t+A}$ are in a horoball $H_\xi$ centered at the same
parabolic point $\xi$ in $\mathcal P^i$, then 
by Remark~\ref{R:BT}(3), there is a constant $C'$ such that for all $t\geq A$, 
\[
  \frac{b(\xi_{t-A})}{b(\xi_{t+A})} = \frac{B_i(2d(\xi_{t-A},\Gamma
  o))}{B_i(2d(\xi_{t+A},\Gamma o))} \leq C'
\]
which yields the estimate. Else, $t \leq A$  hence $d(\xi_{t+A}, \Gamma o) \leq d(o, \Gamma o) + 2A$ is uniformly bounded above, 
so there exist $C'' > 0$ independent of $\xi$ such that $\mu_o(D(\xi, r)) \geq \mu_o(V(o, \xi, t + A)) \geq C''$, 
which completes the proof, since $\mu_o$ is a finite measure. 
\end{proof}

\begin{proof}[Proof of Corollary \ref{C:singular_harmonic}] 
Suppose by contradiction that the harmonic measure $\nu$ and the Patterson--Sullivan measure $\mu$ are in the same measure class. 
By \cite[Proposition 5.1]{GekhtmanTiozzo}, the Radon-Nykodim derivative $\frac{d\mu}{d\nu}$ is bounded away from $0$ and infinity. 
Now, for any $g \in \Gamma$ let $\xi \in \Lambda_\Gamma$ such that $go$ lies within distance $O(\alpha)$ of a geodesic ray $[o, \xi)$. 
By the shadow lemma for the hitting measure (\cite[Proposition 2.3]{GekhtmanTiozzo}), we have 
$$\nu(V(o, \xi, d(o, go))) \asymp e^{- d_G(e, g)}$$
where $d_G$ is the Green distance (see e.g. \cite[Section 2.5]{GekhtmanTiozzo}).
On the other hand, by Theorem \ref{T:intro-global_shadow_lemma},
$$\mu(V(o, \xi, d(o, go))) \asymp e^{- \delta_\Gamma d(o, go)}$$
so, since $\frac{d\mu}{d\nu}$ is bounded above and below, the difference $d_G(e, g) - \delta_\Gamma d(o, go)$
is bounded independently of $g$. 
Since the Green metric $d_G$ is quasi-isometric to any word metric on $\Gamma$, this implies that the orbit map $g \mapsto go$ 
is a quasi-isometric embedding; however, by letting $g = h^n$ with $h$ a parabolic element and taking the limit as $n \to \infty$, we obtain a contradiction. 
\end{proof}

\section{Applications of the shadow lemma} \label{S:applications}

Recall that if $p$ is a boundary point, then $H_p(r)$ is the unique horoball centered at $p$ with radius $r$
and $\mathcal H_{p}(r)$ is the shadow in $\partial X$ of $H_p(r)$.
Recall that $\mathcal P$ is the set of all parabolic fixed points in
$\partial X$, which we decompose as a disjoint union of orbits $\mathcal
P=\mathcal P^1\cup \cdots \cup \mathcal P^a$. 
Then we fix a quasi-$\Gamma$-invariant horoball decomposition of $X$ as given by 
Proposition \ref{P:bowditch_horoball_decomposition}, where each parabolic point $p$ in the set $\mathcal P$
determines a unique horoball $H_p$ centered at $p$ in the decomposition, and we denote the radius of $H_p$ by $r_p$. 

\subsection{Dirichlet Theorem}

We now prove the Dirichlet-type theorem, which does not rely on the shadow
lemma. For fixed $s>0$, recall the set of parabolic points with large radius is denoted by 
\[
  \mathcal P_s:= \{p\in \mathcal P \mid r_p\geq s\}. 
\]
We also denote the set of 
parabolic points in a given orbit with large radius by
\[
  \mathcal P_s^i:= \{p\in \mathcal P^i \mid r_p\geq s\}.
\]

\begin{theorem}[Dirichlet-type theorem] \label{T:Dirichlet}
  Let $(X,d)$ be a proper hyperbolic metric space and $\Gamma$ a geometrically finite group of isometries of 
  $X$ with parabolic elements. 
  Then there exist constants $c_1>0,c_2\geq 1$ such that for all $s\leq c_1$, the set 
  \[
    \bigcup_{p\in \mathcal P_s} \mathcal H_p(c_2\sqrt{sr_p})
  \]
  covers the limit set $\Lambda_\Gamma$, and there exists $0< c_3 \leq 1$ such that the shadows 
  $\{\mathcal H_p(c_3\sqrt{sr_p})\}_{p\in \mathcal P_s} $ are pairwise disjoint.
  \end{theorem}

Note that Theorem \ref{T:Dirichlet} is effectively the same statement as
Theorem \ref{T:intro_dirichlet}.

\begin{proof}
First, by cocompactness of the action of $\Gamma$ on the 
non-cuspidal part, note that we can rescale all
horoballs in each of the finitely many $\Gamma$-orbits of parabolic points
by a multiplicative constant $c$ 
so that the convex hull of the limit set $C_\Gamma$ is covered by the
horoballs rescaled by $c$, i.e. 

\begin{equation}
  C_\Gamma \subseteq \bigcup_{\substack{p\in \mathcal P}} 
  H_{p}(cr_p).
  \label{E:cvxhull_containment}
\end{equation}
Now fix $0<s \leq c_1:= \frac1c $ and $\xi\in\Lambda_\Gamma$.
Let $w\in[o,\xi)$ such that $e^{-d(o,w)}=cs$. 
By the above Equation \eqref{E:cvxhull_containment}, there is some $p\in \mathcal P$ such that 
$w\in H = H_{p}(cr_p)$.  Let $q$ be a point on the intersection of $[o,p)$ with $\partial H$, 
so that $cr_p = e^{-\beta_p(o,q)}$ by the definition of radius of a horoball.  
Since $w\in H$, we have $\beta_p(o,w)\geq \beta_p(o,q)$. 
Since $w\in[o,\xi)$, we apply Lemma \ref{L:sqrt} and conclude
there exists a point $z$ on $[o, \xi)$ with 
\[
  \beta_p(o, z) \geq \frac{d(o, q) + d(o, w)}{2} - O(\alpha).
\]
Then there exists a constant $c_2$ such that 
\[
  e^{-\beta_p(o, z)} \leq e^{ - \frac{d(o, q) + d(o, w)}{2}} e^{O(\alpha)}
  = c_2 \sqrt{r_p s}
\]
which shows that $z$ belongs to $H_{p}(c_2 \sqrt{r_p s})$, 
hence also $\xi$ belongs to $\mathcal{H}_{p}(c_2 \sqrt{r_p s})$.
Finally, observe that $s\leq r_p$ since
\[
  cs = e^{-d(o,w)} \leq  e^{-\beta_p(o,w)}\leq   e^{-\beta_p(o,q)} = 
  cr_p.
\]
To prove the second part, note that, since the horoballs are disjoint, we have by Corollary \ref{C:disjoint}
that there exists a constant $C > 0$ for which 
$$d_{\partial X}(p_1, p_2) \geq C (r_1 r_2)^{\frac\epsilon2}.$$
Now, by Lemma \ref{L:diameter}, there exists a constant $c_3$ such that for each $i = 1,2$ 
one has the following bound on the diameter of the shadow
$$\textup{diam}\ \mathcal{H}_{p_i}(c_3 \sqrt{r_i s}) \leq \frac{C}{4} (r_i
  s)^{\frac\epsilon2}.$$
Hence, using that $r_i \geq s$, the inequalities 
$$d_{\partial X}(p_1, p_2) \geq  C (r_1 r_2)^{\frac\epsilon2}  \geq \frac{C}{2} (s
r_1)^{\frac\epsilon2}  + \frac{C}{2} (s r_2)^{\frac\epsilon2} > \textrm{diam }\mathcal
H_{p_1}(c_3 \sqrt{r_1s}) + \textrm{diam }\mathcal H_{p_2}(c_3 \sqrt{r_2s})$$
show that the shadows $\mathcal{H}_{p_1}(c_3 \sqrt{r_1 s})$ and $\mathcal{H}_{p_2}(c_3 \sqrt{r_2 s}) $ are disjoint. 
\end{proof}

\subsection{Horoball counting} 

Now we will apply the Dirichlet theorem to produce horoball counting
estimates. We need a version of the shadow lemma for shadows of horoballs
rather than traditional shadows. The following condition will be the main hypothesis 
on the measures for the remaining application, so we introduce it as a definition. 

\begin{definition}[Horoball shadow lemma] 
  \label{D:horoball_shadow_lemma}
Let $(X,d)$ be a proper hyperbolic metric space and $\Gamma$ a geometrically finite group of isometries of $X$.
We say that a measure $\mu$ on $\partial X$ \emph{satisfies the horoball
shadow lemma with dimension} $\delta$ if for all 
$c_1 < c_2$ there there exists a multiplicative constant such that 
  \begin{equation}
    \mu(\mathcal H_p(c\theta r_p))\asymp
    B_i(-2\log\theta)\theta^{2\delta} r_p^\delta. 
    \label{E:horoball_shadow_lemma1}
  \end{equation}
  for any $0<\theta \leq 1$, any $c \in [c_1, c_2]$, any $p \in \mathcal P^i$, and any $i = 1, \dots, a$.
\end{definition}

The measures we have considered so far satisfy this property: 

\begin{corollary}
  \label{C:horoball_shadow_lemma}
  Let $(X,d)$ be a proper hyperbolic metric space and $\Gamma$ a
  geometrically finite group of isometries of $X$ with $\delta$-tempered
  parabolic subgroups.  
  Then any $\delta$-quasi-conformal measure $\mu$ on
  $\Lambda_\Gamma$ with no atoms 
  satisfies the horoball shadow lemma with dimension $\delta$.
  \end{corollary}

\begin{proof}
  Letting $t=-\log(c\theta r_p)$, see that by definition of $r_p$, we have 
  $-\log(\theta)-k\leq d(\xi_t,\Gamma o)\leq -\log(\theta)+k$ 
  for some constant $k$ depending only on $\alpha$ and the fixed interval containing $c$. 
  Then Equation \ref{E:horoball_shadow_lemma1} follows from 
  the global shadow lemma (Theorem \ref{T:global_shadow_lemma}), Lemma \ref{L:horoball_shadows_compare}, and Remark~\ref{R:BT}(1) and (3). 
\end{proof}

We will now prove a horoball counting statement, analogous to \cite[Theorem 3]{StratmannVelani}.

\begin{proposition}[Horoball counting] \label{P:counting}
  Let $(X,d)$ be a proper hyperbolic metric space and $\Gamma$ a geometrically finite group of isometries of $X$. 
  Assume $\Gamma$ has $\delta$-tempered parabolic subgroups, and $\mu$ is a
  measure on $\Lambda_\Gamma$ that satisfies the horoball shadow lemma with
  dimension $\delta$ (Definition \ref{D:horoball_shadow_lemma}). 
  Let us define
  $$\mathcal P_n(\lambda) := \{ p \in \mathcal P,  \  \lambda^{n+1} \leq r_p \leq \lambda^n \}.$$
Then there exist $\lambda<1$ and constants such that 
$$\# \mathcal P_n(\lambda) \asymp \lambda^{-n \delta}$$
for all $n\in\mathbb N$. 
\end{proposition}

\begin{proof}
  Let $c_1,c_2$ and $c_3$ be as in Theorem \ref{T:Dirichlet}. 
  Then for all $0<s<c_1$, 
  \[
    \mu(\Lambda_\Gamma) \leq  \sum_{p\in \mathcal P_s} \mu(\mathcal H_p(c_2\sqrt{sr_p}))
  \]
  and 
  \[
    \sum_{p\in \mathcal P_s}
    \mu(\mathcal H_p(c_3\sqrt{sr_p}))
    \leq \mu(\Lambda_\Gamma). 
  \]
  Then applying the horoball shadow lemma (Definition
  \ref{D:horoball_shadow_lemma}) with $\theta=\sqrt{s/r_p}$ to get 
  \[
    \sum_{i=1}^a\sum_{p\in\mathcal P_s^i} B_i \left(-\log\left({\frac{s}{r_p}}\right) \right) \left(\frac{ s}{r_p}\right)^{\delta}r_p^\delta  \asymp 1
  \]
  hence there is a constant $c>0$ such that 
  \begin{equation}
    \label{E:sdelta}
    c^{-1}s^{-\delta}\leq 
    \sum_{i=1}^a\sum_{p\in\mathcal P_s^i} B_i
    \left(\log\frac{r_p}s\right) \leq cs^{-\delta}.
  \end{equation}
  Now by finiteness of the upper annular growth rate, for $u$ sufficiently
  large, $t\geq \log(u)$, and $i=1,\ldots,a$, 
  \[
    \frac{B_i(t)}{B_i(t-\log u)}\leq u^{\delta_{\Pi_{i}}^+}
  \]
  where $\Pi_i$ is the stabilizer of a fixed $p_i\in \mathcal P$.

  Then since $\Gamma$ has $\delta$-tempered parabolic subgroups, we may fix 
  a sufficiently large $u$ such that 
\begin{align*}
    \sum_{i=1}^a\sum_{\substack{p\in\mathcal P^i\\ su \leq r_p } }
    B_i(\log(\frac{r_p}s)) = \sum_{i=1}^a\sum_{\substack{p\in\mathcal P^i\\ su \leq r_p } } 
    B_i(\log(\frac{r_p}{su})) \frac{B_i(\log(\frac{r_p}s))}{B_i(\log(\frac{r_p}{su}))} 
    \leq 
    c (su)^{-\delta} u^{\max_{i}{\delta_{\Pi_i}^+}}
    \leq \frac12 c^{-1}s^{-\delta}
\end{align*}
where $c$ is given by Equation \eqref{E:sdelta}. Then 
  \begin{align*}
    \# \{p\in \mathcal P : s\leq r_p\leq su\} & \asymp
    \sum_{i=1}^a\sum_{\substack{p\in\mathcal P^i\\ s\leq r_p < su}}
    B_i(\log(\frac{r_p}s)) \\ 
    & = 
    \sum_{i=1}^a\sum_{\substack{p\in\mathcal P^i\\ s\leq r_p }}
    B_i(\log(\frac{r_p}s)) 
    - \sum_{i=1}^a\sum_{\substack{p\in\mathcal P^i\\ su \leq r_p } }
    B_i(\log(\frac{r_p}s)) \\
    &\geq c^{-1}s^{-\delta} - \frac12c^{-1} s^{-\delta} \geq
    \frac12 c^{-1} s^{-\delta}. 
  \end{align*}
  Equation \eqref{E:sdelta} and nonnegativity of $B_i$
  implies the same expression is bounded above by $cs^{-\delta}$. 
  Taking $\lambda=u^{-1}$ and $s=\lambda^{n+1}$ proves the statement. 
\end{proof}

\begin{remark}
  In fact, Proposition~\ref{P:counting} also follows from
  \cite[Theorem 1.7]{yang19}, though our approach is different.
  Yang proves that 
the exponential growth of horoballs in Proposition
\ref{P:counting} is equivalent to the
{\em Dal'bo--Otal--Peign\'e (DOP) condition}. We say that $\Gamma$
satisfies the DOP condition if for every parabolic subgroup $\Pi$ of
$\Gamma$, 
\[
  \sum_{g\in\Pi} d(o,go) e^{-\delta_\Gamma d(o,go)}<\infty.
\]
It is straightforward to verify that if $\Gamma$ has tempered growth then
$\Gamma$ has the DOP
condition. Additionally, in the language of Yang, geometrically finite actions are cusp-uniform and
measures satisfying  Definition~\ref{D:horoball_shadow_lemma} have no
atoms, which implies $\Gamma$ is of divergent type (see
Section~\ref{S:quasi_conformal_background}).
Thus, \cite[Theorem 1.7]{yang19} implies Proposition~\ref{P:counting}.
  \label{R:compare_to_yang}
\end{remark}

\begin{proposition}[Horoball counting for distinct orbits] \label{P:distinct-orbits}
  Let $(X,d)$ be a proper hyperbolic metric space and $\Gamma$ a
  geometrically finite group of isometries of $X$. 
  Assume $\Gamma$ has $\delta$-tempered parabolic subgroups, 
  and $\mu$ is a measure on $\Lambda_\Gamma$ that satisfies the horoball shadow lemma with
  dimension $\delta$ (Definition \ref{D:horoball_shadow_lemma}). 
  For each $i = 1, \dots, a$, let us define
  $$\mathcal P_n^i(\lambda) := \{ p \in \mathcal P^i,  \  \lambda^{n+1} \leq r_p \leq \lambda^n \}.$$
  Then there exists a multiplicative constant such that for $\lambda<1$ sufficiently small, 
$$\# \mathcal P_n^i(\lambda) \asymp \lambda^{-n \delta}$$
for all $n\in\mathbb N$ and all $i = 1, \dots, a$. 
\end{proposition}

\begin{proof}
For each $i = 1, \dots, a$, choose a parabolic point $p_i \in \mathcal P^i$, and let $\Pi_i := \textup{Stab}(p_i)$ be its stabilizer. 
For each $t > 0$, consider the function 
$$f_i(t) := \# \{ g \Pi_i \in \Gamma/\Pi_i  \ : \ d(o, g \Pi_i o) \leq t \}.$$
Then, by \cite[Theorem 3.1]{hersonskypaulin04},  for any $i, j$ and any $t > 0$ we have
\begin{equation} \label{E:fi-fj}
f_i(t) \asymp \#\{ g \in \Gamma \ : \ d(o, go) \leq t \} \asymp f_j(t).
\end{equation}
Now, note that, there exists a constant $C$, depending on $\alpha$ and the diameter of a fundamental 
domain for the action of $\Pi_i$ on the horosphere containing $\Pi_i o$, such that for any $g \in \Gamma$,  
$$| d(o, g \Pi_i o) + \log (r_{g p_i}) | \leq C$$
hence, by definition of $\mathcal P_n^i(\lambda)$ and Equation \eqref{E:fi-fj}, for any $i, j$ and any  $n \geq 0$, 
$$\# \mathcal P_n^i(\lambda) \asymp  f_i(n \log(1/\lambda)) \asymp  f_j(n \log(1/\lambda))  \asymp \# \mathcal P_n^j(\lambda).$$
Since $\# \mathcal P_n(\lambda)$ is the sum of the finitely many $\# \mathcal P_n^i(\lambda)$, we obtain by Proposition \ref{P:counting}
$$\# \mathcal P_n^i(\lambda) \asymp \# \mathcal P_n(\lambda) \asymp \lambda^{-n \delta}.$$
\end{proof}

\subsection{Khinchin functions}

A \emph{Khinchin function} is a positive, increasing function $\varphi : \mathbb{R}^+ \to (0, 1]$ 
such that there exist constants $b_1<1, b_2 >0 $ for which 
$$\varphi(b_1 x) \geq b_2 \varphi(x) \qquad \textup{for any }x \in \mathbb{R}^+.$$
Note that it follows that for any $k_1\in(0,1)$ there exists a $k_2$ such that
$\phi(k_1x)\geq k_2 \phi (x)$ for all $x\in\mathbb R^+$. 

Khinchin functions have been introduced in diophantine approximation: Khinchin's classical theorem 
\cite{Khintchine} states that the set of reals $x$ such that $|x - \frac{p}{q}| < \frac{\psi(q)}{q}$ for infinitely many rationals $\frac{p}{q}$ has measure zero if and only if $\sum_{q=1}^\infty \psi(q) < \infty$, 
and full measure otherwise. The function $\phi$ we are using in this paper is related to $\psi$ by the formula $\phi(x) = \psi(\log x^{-1})$. 
As a famous example, define $\varphi(x) = (\log x^{-1})^{-(1+\epsilon)}$
for $x<e^{-1}$ and equal to 1 otherwise. Then using Khinchin's original theorem
one proves that the set $\{x \in \mathbb{R} \ : \  \exists \textup{
infinitely many }\frac{p}{q} \textup{ with } |x - \frac{p}{q}| <
\frac{1}{q^{2 + \epsilon}} \}$
has zero Lebesgue measure if $\epsilon > 0$, and full measure if $\epsilon = 0$.

Now fix $(X, d)$ a hyperbolic metric space, and $\Gamma$ a geometrically finite group of isometries of $X$. 
Let $\mu$ be a quasi-conformal measure  on $\Lambda_\Gamma$ with no atoms. 
Recall that $H_{p}(r)$ is the unique horoball centered at the boundary point
$p$ with radius $r$. Note that for any measure $\mu$ which satisfies the horoball shadow lemma 
(Definition \ref{D:horoball_shadow_lemma}) and 
for any Khinchin function $\phi$,
\begin{equation} \label{E:shad}
\mu(\mathcal{H}_{p}(r_p \varphi(r_p))) \asymp r_p^\delta (\varphi(r_p))^{2 \delta} B_i(-2 \log \varphi(r_p))
\end{equation}
where $p$ belongs to $\mathcal P^i$. 

\subsection{Quasi-independence}

For $i = 1, \dots, a$, let $S_n^i$ be the union of the shadows of $H_{p}(r_p\phi(r_p))$ for $\lambda^n\leq r_p \leq \lambda^{n+1}$ and $p \in \mathcal{P}^i$. 
Note that $S_n^i$ depends on $\lambda$ and $\phi$.

Given a horoball $H$ of radius $r$ and a function $f : \mathbb{R}^+ \to \mathbb{R}^+$, we denote as 
$f H$ the horoball with the same boundary point as $H$ and radius $r f(r)$. 

\begin{lemma}[Quasi-independence]  \label{L:quasi_indp}
  Let $(X, d)$ be a hyperbolic metric space and $\Gamma$ a geometrically
  finite group of isometries of $X$. 
  Assume $\Gamma$ has $\delta$-tempered parabolic subgroups, and 
  $\mu$ is a
  measure on $\Lambda_\Gamma$ that satisfies the horoball shadow lemma with
  dimension $\delta$ (Definition \ref{D:horoball_shadow_lemma}). 
Fix a Khinchin function $\phi$. 
  Then there exists a positive constant $C$ such that for all $i, j$, for all $n,m\in\mathbb N$ sufficiently large, 
  and for all $\lambda<1$ sufficiently small, 
  \[
    \mu(S_n^i \cap S_m^j)\leq C\mu(S_n^i)\mu(S_m^j).
\]  
\end{lemma}

\begin{proof} 
  Let $\lambda<1$ be sufficiently small 
  as given by Proposition \ref{P:distinct-orbits}.
We denote as $S(H)$ the shadow of the horoball $H$. 

Let $r_i=r_{p_i}$ for $i=1,2$. 
Let $H_{p_1} = H_{p_1}(r_1)$  and $H_{p_2} = H_{p_2}(r_2)$ be two disjoint horoballs. 
By Corollary \ref{C:disjoint}, we obtain
  \[ 
    d_{\partial X}(p_1, p_2) \geq C_\alpha (r_1 r_2)^{\frac\epsilon2}. 
  \]
where $C_\alpha > 0$ only depends on the hyperbolicity constant. 

\medskip
\textbf{Claim.} 
There exists a constant $c$ such that, if $r_1 > r_2$ and 
$S(\phi H_{p_1}) \cap S( \phi H_{p_2})\neq \emptyset$,
then 
\begin{equation} \label{E:inclusion}
S (H_{p_2}) \subseteq S (c \varphi H_{p_1}).
\end{equation}

\begin{proof}[Proof of the claim]
Since $\phi$ is increasing,
$$d_{\partial X}(p_1, p_2) \leq C  (\varphi(r_1) r_1)^\epsilon + C
(\varphi(r_2) r_2)^\epsilon \leq 2 C (\varphi(r_1) r_1)^\epsilon$$
where $C$ comes from Lemma \ref{L:diameter}, hence 
 $$C_\alpha (r_1 r_2)^{\frac\epsilon2} \leq 2 C(\varphi(r_1) r_1)^\epsilon$$
thus since $\varphi$ is increasing and $r_1$ is bounded, $\phi$
and hence $\phi^\epsilon$ is bounded by some constant $M$ and 
  $$\frac{M(\varphi(r_1) r_1)^{\epsilon}}{r_2^\epsilon} \geq \left( 
  \frac{\varphi(r_1)^2 r_1}{r_2} 
\right)^{\epsilon}
  \geq \frac{C_\alpha^2}{4C^2}.$$
Hence, if $\xi \in \mathcal{H}_{p_2}(r_2)$, we estimate
$$d_{\partial X}(\xi, p_1) \leq d_{\partial X}(\xi, p_2) + d_{\partial
X}(p_2, p_1) \leq C r_2^\epsilon + 2 C (\varphi(r_1) r_1)^\epsilon \leq c
(\varphi(r_1)r_1)^\epsilon$$
with $c = \frac{M4 C^3}{C_\alpha^2} + 2 C$, which implies Equation \eqref{E:inclusion}.
\end{proof}

Now let $m > n$, and pick an element $p_\star$ of $\mathcal P_n^i(\lambda)$. 
Let us consider the set 
$$I(p_\star) := \{ p \in  \mathcal P^j_m(\lambda) \ : \ S(\varphi H_p) \cap S(\varphi H_{p_\star}) \neq \emptyset \}.$$
By the horoball shadow lemma (Definition \ref{D:horoball_shadow_lemma}), 
for any $p \in \mathcal P_m^j(\lambda)$ we have 
\begin{equation} \label{E:e1}
\mu(S(H_p)) \asymp \lambda^{m \delta}
\end{equation}
while by the counting lemma (Prop. \ref{P:counting})
\begin{equation} \label{E:e2}
\# \mathcal P_m^j(\lambda) \asymp \lambda^{-m \delta}.
\end{equation}

By Theorem \ref{T:Dirichlet}, setting $s = \lambda^{m+1}$, there exists $c_2$ such that the shadows in the set 
$$\Sigma := \{ S(c_2 H_{p}) \ : \  p \in \mathcal P_m^j(\lambda) \}$$ 
are mutually disjoint. Since $\phi(x) \leq 1$, we also have that the shadows in the set
$$\Sigma_\phi := \{ S(c_2 \phi H_{p}) \ : \  p \in \mathcal P_m^j(\lambda) \}$$ 
are mutually disjoint. 
Now, by the horoball shadow lemma (Definition \ref{D:horoball_shadow_lemma}), we have 
$$\mu(S(c_2 H_p)) \asymp \mu(S( H_p)), \qquad \mu(S(c_2 \phi H_p)) \asymp \mu(S( \phi H_p))$$
for any $p \in \mathcal P_m^j(\lambda)$.
Hence, since the elements of $\Sigma_\phi$ are pairwise disjoint, applying Equations \eqref{E:e1} and \eqref{E:e2} we obtain, 
for any $p \in \mathcal P_m^j(\lambda)$,
\begin{equation}
  \mu(S_m^j) \asymp \# \mathcal P_m^j(\lambda) \cdot \mu(S( \varphi H_p) ) \asymp \frac{\mu(S(\varphi H_p) )}{\mu(S(H_p))}. 
  \label{E:countsm}
\end{equation}
Note that the same argument implies $\mu(S_n^i)\asymp \sum_{p\in\mathcal P_n^i(\lambda)}
\mu(S(\phi H_p))$, even if the union is not disjoint.
Now, note that, if $p \in I(p_\star)$, then by Equation \eqref{E:inclusion}
$$S(H_p) \subseteq S( c \varphi H_{p_\star}).$$
Moreover, since the elements of $\Sigma$ are disjoint and $\mu(S(c_2 H_p)) \asymp \mu(S( H_p))$, 
$$\mu(S(\varphi H_{p_\star})) \asymp \mu(S(c \varphi H_{p_\star})) \gtrsim \# I(p_\star) \inf_{p \in I(p_\star)} \mu(S(H_p))$$
hence 
\begin{align*}
\mu(S_n^i \cap S_m^j) & \leq \sum_{p_\star \in  \mathcal P_n^i(\lambda)} \sum_{p \in I(p_\star)} \mu(S( \varphi H_p )) \\
 & \leq \sum_{p_\star \in  \mathcal P_n^i(\lambda)} \# I(p_\star) \sup_{p \in I(p_\star)} \mu(S(\varphi H_p)) \\
& \lesssim \sum_{p_\star \in \mathcal P_n^i(\lambda)}  \frac{\mu(S(\varphi H_{p_\star}))}{\inf_{p \in I(p_\star)} \mu(S(H_p))} \sup_{p \in I(p_\star)}   \mu(S(\varphi H_p)) \\
& \lesssim  \mu(S_n^i) \frac{ \sup_{p \in I(p_\star)}   \mu(S(\varphi H_p))}{ \inf_{p \in I(p_\star)} \mu(S(H_p))}  \asymp  \mu(S_n^i) \mu(S_m^j)
\end{align*}
where the last comparison follows by Equation \eqref{E:countsm}.
This completes the proof.
\end{proof}

\subsection{Khinchin theorem}

Given a Khinchin function $\varphi$, a small enough $\lambda<1$, and $i = 1, \dots, a$, we define the set
$$\Theta_{\lambda}^{i}(\phi) := \limsup_{n\to\infty} S_n^i =  \bigcap_{n = 0}^\infty \bigcup_{m \geq n}
\bigcup_{\substack{p \in \mathcal P_m^i(\lambda)}} \mathcal{H}_{p}(r_p
\varphi(r_p)). $$
Moreover, we have the \emph{Khinchin series} 
$$K_{\lambda}^i(\phi)  :=  \sum_{n = 0}^\infty \phi(\lambda^n)^{2\delta} B_i(-2 \log \varphi(\lambda^n)).$$
Similarly, we define 
$$\Theta_{\lambda}(\phi) := \bigcup_{i = 1}^a \Theta_{\lambda}^i (\phi)
\qquad \textup{and}  \qquad K_{\lambda}(\varphi) := \sum_{i = 1}^a K^i_{\lambda}(\varphi).$$
We are now ready to state the main theorem of this subsection.

\begin{theorem}[Khinchin-type theorem] \label{T:khin}
  Let $(X, d)$ be a proper hyperbolic metric space and $\Gamma$ a geometrically
  finite group of isometries of $X$. 
  Assume $\Gamma$ has $\delta$-tempered parabolic subgroups, and 
  $\mu$ is a quasi-conformal probability measure of dimension $\delta$ on $\Lambda_\Gamma$ with no atoms. 
Let $\varphi$ be a Khinchin function. Then there exists a $\lambda<1$ such
that 
for each $i = 1, \dots, a$:
\begin{enumerate}
\item $\mu(\Theta_{\lambda}^{i}(\phi) ) = 0$ if $K_{\lambda}^i(\phi)  < \infty$;
\item
$\mu(\Theta_{\lambda}^{i}(\phi)) = 1$ if $K_{\lambda}^i(\phi) = \infty$.
\end{enumerate}
As a consequence, $\mu(\Theta_{\lambda}(\phi)) = 0$ if and only if $K_{\lambda}(\phi)  < \infty$, and otherwise $\mu(\Theta_{\lambda}(\phi) ) = 1$.
\end{theorem}

To prove this, let us recall the Borel-Cantelli lemma and its converse (for a proof see e.g. \cite{Lamperti}): 
\begin{lemma} \label{L:BC}
Let $(S, \mathbb{P})$ be a probability measure space, and $(A_n) \subseteq S$ a sequence of measurable subsets. Then: 
\begin{enumerate}
\item
If $\sum_{n = 0}^\infty \mathbb{P}(A_n) < \infty$, then $\mathbb{P}(\limsup A_n) = 0$.
\item
If  $\sum_{n = 0}^\infty \mathbb{P}(A_n) = \infty$ and there exists $c > 0$ such that
$\mathbb{P}(A_n \cap A_m) \leq c \mathbb{P}(A_n) \mathbb{P}(A_m)$ for any distinct $n, m \geq 0$, then $\mathbb{P}(\limsup A_n) > 0$. 
\end{enumerate}
\end{lemma}

\begin{proof}[Proof of Theorem \ref{T:khin}]
  Fix $\lambda<1$ small as given by Proposition \ref{P:distinct-orbits}. 
  Note that by Corollary \ref{C:horoball_shadow_lemma} and 
  Equation \eqref{E:shad}, for any $p \in \mathcal P_n^i(\lambda)$, 
  \[ 
  \mu(S_n^i)\asymp \# \mathcal P_n^i(\lambda) \cdot\mu( S(\phi H_p) ) \asymp
    \lambda^{-n\delta}\lambda^{n\delta}\phi(\lambda^n)^{2\delta} B_i(-2 \log \varphi(\lambda^n))= \phi(\lambda^n)^{2\delta} B_i(-2 \log \varphi(\lambda^n)).
  \]
Now, (1) follows from Lemma \ref{L:BC} (1). 

Conversely, (2) follows Lemma \ref{L:BC} (2):  using the quasi-independence from Lemma \ref{L:quasi_indp} we obtain that
$\mu(\Theta_{\lambda}^{i}(\phi)) > 0$. 
Moreover, from \cite[Lemma 1.2.3]{stratmann}, since $\phi$ is a Khinchin function, we have that $\Theta_{\lambda}^{i}(\phi)$ is $\Gamma$-invariant up to measure zero, meaning that 
for any $g \in \Gamma$ we have $\mu(g \Theta_{\lambda}^{i}(\phi) \Delta \Theta_{\lambda}^{i}(\phi)) = 0$
(their proof is stated in the convex cocompact case, but the same proof applies here). 
Thus, from ergodicity of nonatomic quasi-conformal densities \cite[Theorem 4.1]{MYJ} 
we conclude that $\mu(\Theta_{\lambda}^{i}(\phi)) = 1$. 
\end{proof}

\subsection{The logarithm law}

We now state and prove the logarithm law in the general case of hyperbolic metric spaces. 
The following result compares to \cite[Proposition 4.9]{StratmannVelani}.

\begin{theorem}[Logarithm Law] 
  \label{T:loglaw}
  Let $(X, d)$ be a proper hyperbolic metric space and $\Gamma$ a geometrically
  finite group of isometries of $X$. 
  Assume $\Gamma$ has $\delta$-tempered parabolic subgroups, and 
  $\mu$ is a quasi-confomal measure of dimension $\delta$ on
  $\Lambda_\Gamma$ with no atoms. 
  If the parabolic subgroups of $\Gamma$ moreover have mixed exponential growth, 
  then for $\mu$-almost every $\xi$ in the limit set $\Lambda_\Gamma$, 
  \[
    \limsup_{t\to+\infty} \frac{d(\xi_t,\Gamma o)}{\log t} =
    \frac1{2(\delta-\delta_{\max})}
  \]
  where $\delta_{\max}$ is the maximal growth rate of any parabolic subgroup, 
  and $\xi_t$ is the point on a geodesic ray $[o,\xi)$ that is distance $t$ from $o$.
\end{theorem}

We will see in Section \ref{S:final_result_hilbert}
  how
Theorem \ref{T:loglaw} implies Theorem \ref{thm-intro:loglaw}.

\begin{proof}
 We recall the set-up for the proof provided in Stratmann-Velani
 \cite{StratmannVelani}. For any $\epsilon$ we define for $0\leq x\leq
 e^{-1}$
  \[
    \phi_{\epsilon}(x) := (\log x^{-1})^{-\frac{1+\epsilon}{2(\delta-  \delta_{\max})} }
  \]
  where $\delta_{\max} := \max \{ \delta_{\Pi_i}, 1 \leq i \leq a \} <
  \delta$, and for $x\geq e^{-1}$ we let $\phi_{\epsilon}(x):=1$.
  Observe that $\phi_\epsilon$ is a Khinchin function
  and that $\phi_\epsilon $, hence
  $\Theta_\lambda(\phi_\epsilon)$, is decreasing in $\epsilon$. 
  
 Now, we claim that the Khinchin series $K(\phi_{\epsilon})$ converges if $\epsilon > 0$ and 
 diverges if $\epsilon = 0$. 
 To see this, recall that mixed exponential growth implies that for any $i$ there exist $\delta_i = \delta_{\Pi_i}$ and $a_i \geq 0$ such that 
$B_i(t) \asymp e^{\delta_i t} (t + 1)^{a_i}$, so we compute
\begin{align*}
K^i_{\lambda}(\phi_{\epsilon})  & = \sum_{n  = 1}^\infty \phi_{\epsilon}(\lambda^n)^{2 \delta} B_i( - 2 \log \phi_{\epsilon}(\lambda^n) ) \\
& \asymp   \sum_{n  = 1}^\infty (n \log \lambda^{-1})^{ -\frac{(1+\epsilon) (\delta - \delta_i)}{\delta-  \delta_{\max}}}  \left(\frac{1+\epsilon}{\delta-  \delta_{\max}} \log (n \log \lambda^{-1}) + 1 \right)^{a_i} \\
& \asymp 
\sum_{n = 1}^\infty n^{-(1+\epsilon)  \frac{\delta - \delta_i}{\delta-  \delta_{\max}}} \log(n+1)^{a_i}.
\end{align*}
Now, if $\epsilon > 0$ the above series converges for any $i > 0$, while if $\epsilon = 0$ it diverges if $\delta_i = \delta_{\max}$.

  Then by Theorem \ref{T:khin}, the limsup set $\Theta_{\lambda}(\phi_\epsilon)$ with respect to
  $\phi_\epsilon$ is $\mu$-null for all $\epsilon>0$, and has full measure for $\epsilon=0$. 
  Choose a boundary point $\xi$ in the full measure set
  $\Theta_{\lambda}(\phi_0)\smallsetminus \bigcup_{\epsilon > 0}
  \Theta_{\lambda}(\phi_\epsilon)$ and a geodesic ray $[o,\xi)$. 

  Define for each $p \in \mathcal{P}$ the enlarged horoball $\tilde{H}_p := H_p(c r_p)$, where $c = O(\alpha)$ 
  is chosen so that, if a geodesic ray from $o$ to $\xi \in \partial X$ intersects $H_p$, 
  then \emph{any} geodesic ray from $o$ to $\xi$ intersects $\tilde{H}_p$. 
  
  By definition of the limsup sets, there exists a sequence of parabolic points $p_n$ in $\mathcal P$ 
  with $r_{p_n} \leq 1$   such that $[o,\xi)$ passes through horoballs  $\phi_0 \tilde{H}_{p_n}$ in order,  
  and passes through no other horoballs of the form $\phi_0 H_p$;
  in other words, the radii $r_{p_n} \phi_0(r_{p_n})$ 
  are monotone decreasing in $n$, and $[o,\xi)\cap \phi_0 H_{p} \neq\varnothing$ implies $p=p_n$ for some $n$.

  \begin{figure}[]
  \centering
  \begin{tikzpicture}[scale=.7]
  \draw (0,0) coordinate (pn);
  \draw (0,8) coordinate (o1);
  \draw (4,8) coordinate (o2);
  \draw (4,0) coordinate (xi);
  \draw (2,13) coordinate (o);

  \draw(-6,0)--(8,0) node[below right] {$\partial X$};

  \draw (pn) --
  (o1);
  \draw (xi)-- coordinate[pos=1] (xitn') (o2);

  \draw (o) to [out=210 , in = 90] (o1);
  \draw (o) to [out= -30, in = 90] (o2);

    \def\rr{4}
    \draw[] (0,\rr) circle (\rr);

    \draw (0,2*\rr) coordinate (q); 
    \draw (\rr,\rr) coordinate (xitn);

    \begin{scope}[red, thick]

  \def\r{5}
    \draw[] (0,\r) circle (\r);

    \def\s{.85}
    \draw[] (4.75,\s) circle (\s);

    \def\ss{.1}
    \draw[] (3.95,\ss) circle (\ss);
    \end{scope}

    \draw[dashed] (pn)
    to [out=90,in=180]  (xitn); 

  \draw (-5,8) node {$\tilde H_{p_n}$};
  \draw (-2.5,5) node {$\phi_{\epsilon_n}H_{p_n}$};

  \def\s{.05}
  \draw[fill] (pn) circle (\s) node[below ] {$p_n$};
  \draw[fill] (o) circle (\s) node[above left] {$o$};
  \draw[fill] (xi) circle (\s) node[below ] {$\xi$};
  \draw[fill] (xitn) circle (\s) node[ above left ] {$\xi_{t_n}$};
  \draw[fill] (xitn') circle (\s) node[ above right] {$\xi_{t_n}^{-}$};
  
\end{tikzpicture}
  \caption{For the proof of Theorem \ref{T:loglaw}. The red horoballs
  correspond to the collection of horoballs $\tilde H_p$ which have been
rescaled by $O(\alpha)$ so that if some choice of geodesic $[o,\xi)$ cuts a  
horoball $H_p$, then any other choice of geodesic cuts $\tilde H_p$. }
  \label{F:loglaw}
\end{figure}

  For each $n$, choose $\epsilon_n$
  so that the geodesic $[o,\xi)$ is tangent to
  the horoball $\phi_{\epsilon_n} H_{p_n }$. 
  More precisely, let $\xi_{t_n}$ be a closest point projection of $p_n$ onto $[o, \xi)$, and choose $\epsilon_n$ 
  such that the boundary of $\phi_{\epsilon_n} H_{p_n }$ contains $\xi_{t_n}$. 
  See Figure \ref{F:loglaw} for an illustration. 
  
  See that $\log r_{p_n}^{-1}\leq t_n + O(\alpha)$ because 
  by Corollary \ref{C:horoball_projection}
  $\log r_{p_n}^{-1} + O(\alpha)$ is the
  distance from $o$ to the horoball $\tilde{H}_{p_n }$, which contains the point $\xi_{t_n}$. 
  Also, note that by 
  Corollary \ref{C:horoball_projection} and the 
  definition of the horoball $\phi_{\epsilon_n} H_{p_n }$, the distance from $o$ to the
  horoball  $\phi_{\epsilon_n} H_{p_n }$ is $-\log (r_{p_n}\phi_{\epsilon_n}(r_{p_n})) + O(\alpha)$. 
 
 Let $\xi_{t_n^-}$ denote the entry point of the geodesic $[o, \xi)$ in the horoball $\tilde{H}_{p_n}$. 
First, since each $r_p$ is chosen so that the union of all $H_p$ is the non-cuspidal part, $\xi_{t_n}^-$ is within 
  uniform bounded distance of $\Gamma o$, so there exists $C_1 $ such that 
\begin{equation} \label{E:log1}
d( \xi_{t_n}, \partial H_{p_n}) - C_1 \leq d(\xi_{t_n}, \Gamma o) \leq d(\xi_{t_n}, \xi_{t_n^-}) + C_1.
\end{equation} 
By Corollary \ref{C:projection_estimate}, since $\xi_{t_n}$ is a closest point projection of $p_n$ onto $[o, \xi)$, 
\begin{equation} \label{E:log2}
 d(\xi_{t_n}, \xi_{t_n^-}) = \beta_{p_n}(\xi_{t_n^-}, \xi_{t_n}) + O(\alpha). 
 \end{equation}
 Moreover, for any $x$ on the boundary of $H_{p_n}$ we have, since Busemann functions are $1$-Lipschitz, 
$ d(x, \xi_{t_n}) \geq \beta_{p_n}(x, \xi_{t_n}) = \beta_{p_n}(\xi_{t_n^-}, \xi_{t_n}) + O(\alpha),$ hence
 \begin{equation} \label{E:log3}
 d(\xi_{t_n}, \partial H_{p_n}) \geq \beta_{p_n}(\xi_{t_n^-}, \xi_{t_n}) + O(\alpha).
 \end{equation}
 Finally, since $\xi_{t_n}$ is on the boundary of $\phi_{\epsilon_n} H_{p_n }$ and $\xi_{t_n^-}$ is on the boundary of $\tilde{H}_{p_n }$, 
   by the quasi-cocycle property of Busemann functions and the
 definition of horospheres 
 we have
 \begin{equation} \label{E:log4}
\beta_{p_n }(\xi_{t_n^-}, \xi_{t_n})=-\log \phi_{\epsilon_n}(r_{p_n}) + O(\alpha).
\end{equation}
  Thus, combining Equations \eqref{E:log1}, \eqref{E:log2},
  and \eqref{E:log4} yields the estimate
  \begin{align}
   \nonumber
    d(\xi_{t_n}, \Gamma o) &  \leq \beta_{p_n}( \xi_{t_n^-}, \xi_{t_n}) + C_1 + O(\alpha)\\
    \nonumber
    &= -\log \phi_{\epsilon_n}(r_{p_n})+C_1 + O(\alpha) \\
    \nonumber
     & = \left(\frac{1+\epsilon_n}{2(\delta-\delta_{\max})}\right)\log \left( \log
    (r_{p_n}^{-1}) \right)+C_1 + O(\alpha) \\
    \label{E:bnd_for_xi_t}
    &\leq \left( \frac{1+\epsilon_n}{2(\delta-\delta_{\max})}  \log(t_n + O(\alpha)) \right)+C_1 + O(\alpha).
  \end{align}
On the other hand, let  $q$ be the closest point to $o$ on $[o, p_n ) \cap H_{p_n }(r_{p_n})$. 
Since $q$ lies on the boundary of the horoball, by the quasi-cocycle
property of the Busemann function, 
  \[ 
  - \log \varphi_{\epsilon_n}(r_{p_n}) = \beta_{p_n}(q, \xi_{t_n}) +
  O(\alpha).
  \]
Since $\xi_{t_n}$ is a closest point projection of  $p_n$ onto $[o,\xi)$,
Corollary \ref{C:projection_estimate} 
and the quasi-cocycle property of Busemann functions
gives us that 
 \[
   t_n+\log(\phi_{\epsilon_n}(r_{p_n})) = d(o, \xi_{t_n}) - \beta_{p_n}(q, \xi_{t_n}) + O(\alpha) = d(o, q) + O(\alpha).
   \]
   Thus, by Corollary \ref{C:horoball_projection}
   we obtain 
  \begin{equation} \label{E:tn}
    t_n+\log(\phi_{\epsilon_n}(r_{p_n}))\leq\log r_{p_n}^{-1} + C_2
  \end{equation}
  where $C_2$ is a constant depending only on the hyperbolicity constant.
  Thus, using Equations \eqref{E:log1}, \eqref{E:log3},
  \eqref{E:log4} and \eqref{E:tn}, 
 {\small  \begin{align*}
    d(\xi_{t_n},\Gamma o) &\geq -\log(\phi_{\epsilon_n}(r_{p_n}))-C_1 - O(\alpha)\\
    &=\left( \frac{1+\epsilon_n}{2(\delta-\delta_{\max})} \right)\log(\log
    r_{p_n}^{-1})-C_1 - O(\alpha)\\
    & \geq \left( \frac{1+\epsilon_n}{2(\delta-\delta_{\max})} \right)
    \log(t_n+\log(\phi_{\epsilon_n}(r_{p_n}))-C_2)-C_1 - O(\alpha)\\
    & = \left( \frac{1+\epsilon_n}{2(\delta-\delta_{\max})} \right)
    \log\left(t_n-\left( \frac{1+\epsilon_n}{2(\delta-\delta_{\max})}
    \right)\log(\log(r_{p_n}^{-1})) - C_2 \right)-C_1- O(\alpha) \\
    & \geq \left( \frac{1+\epsilon_n}{2(\delta-\delta_{\max})} \right)
    \log\left(t_n-\left( \frac{1+\epsilon_n}{2(\delta-\delta_{\max})}
    \right)\log(t_n - O(\alpha))-C_2 \right)-C_1 - O(\alpha).
  \end{align*}
  } Thus, noting that $\epsilon_n \to 0$ as $n \to \infty$, 
  \[
    \frac1{2(\delta-\delta_{\max})}
    \leq 
    \limsup_{t\to+\infty} \frac{d(\xi_t,\Gamma o)}{\log t}.
  \]
  It remains to prove the upper bound on the limsup. For values of
  $t$ such that $\xi_t\in X_{nc}$, the result is trivial.
  Recall that each $t_n$ is chosen so that for all values $t$ so that
  $\xi_t\in H_{p_n }(r_{p_n})$, the distance $d(\xi_t,\Gamma o)$
  is maximized up to $O(\alpha)$ at $t=t_n$. 
  Then for such $t\geq t_n$, $\frac{d(\xi_t,\Gamma o)}{\log(t)} \leq
  \frac{d(\xi_{t_n},\Gamma o)}{\log(t_n)}$ as desired by Equation
  \eqref{E:bnd_for_xi_t}. Now consider
  $t\leq t_n$.  
  Then, applying Equations \eqref{E:log1}, \eqref{E:log2}, \eqref{E:log3}, 
 \begin{align*}
  d(\xi_{t_n},\Gamma o) & \geq \beta_{p_n }(\xi_{t_n^-},\xi_{t_n}) - C_1 - O(\alpha) =d(\xi_{t_n},\xi_{t_n^-})- C_1 - O(\alpha) \\
   & =|t_n^- -t_n|- C_1 - O(\alpha)\geq    t_{n}-t- C_1 - O(\alpha).
  \end{align*}
  Thus, $t\geq t_n-d(\xi_{t_n},\Gamma o) - C_1 - O(\alpha)$, and by Equation
  \eqref{E:bnd_for_xi_t}, 
  {\small \[
    \frac{d(\xi_t,\Gamma o)}{\log(t)} \leq
    \frac{d(\xi_{t_n},\Gamma o)}{\log(t_n-d(\xi_{t_n},\Gamma o)- C_1 -O(\alpha))}
    \leq 
    \frac{d(\xi_{t_n},\Gamma o)}{\log(t_n- C_3 \log(t_n) - C_3)}
  \] }
  for some constant $C_3>0$. The result follows Equation
  \eqref{E:bnd_for_xi_t}.
\end{proof}

\section{Applications to Hilbert geometry} \label{S:Hilbert-applications}

In this section, we will apply the results to a class of geometries called
{\em Hilbert geometries}. These geometries generalize hyperbolic geometry
to a non-Riemannian setting in which the metric is not $\CAT(0)$
\cite[Appendix B]{egloff} but, for a large family of examples of interest,
is Gromov hyperbolic. We first introduce the preliminary background.

A domain $\Omega$ in real projective space $\mathbb RP^n$ is {\em properly convex} 
if there exists an affine chart in which $\Omega$ is bounded and convex,
meaning its intersection with any line segment is connected.
We say $\Omega$ is {\em strictly convex} if, moreover, the projective
boundary $\partial_{\text{proj}} \Omega$ in an affine chart does not contain any open line segments. 
Any properly convex domain admits a natural, projectively invariant metric
called the {\em Hilbert metric} which is central to this application. 
The Hilbert metric is defined as follows. Choose an affine chart in which
$\Omega$ is bounded; then for each $x,y\in\Omega$, any projective line
passing through $x$ and $y$ must intersect 
$\partial_{\text{proj}} \Omega$ at exactly two points, $a,b$. Then 
\[
  d_\Omega(x,y):=\frac12 \big| \log [a;x;y;b] \big|
\]
where $[a;x;y;b]:=\frac{|a-y| |b-x| }{|a-x||b-y|}$ is the cross-ratio with
respect to the ambient affine metric inherited from the chart. 
The normalization factor of $\frac12$ ensures that if $\Omega$ is an
ellipsoid, then $(\Omega,d_\Omega)$ is the Beltrami--Klein model for 
hyperbolic space of constant curvature $-1$. 

The cross-ratio is a projective invariant, hence the metric does not depend on
the chart, and projective transformations which preserve $\Omega$ are
isometries with respect to $d_\Omega$. 
Straight lines are geodesics for this metric, and are the only geodesics
when $\Omega$ is strictly convex. Evidently, the Hilbert metric 
is proper and the  
topological boundary $\partial_{\text{proj}} \Omega$ in $\mathbb RP^n$ is a
compactification of $\Omega$ on which projective transformations that
preserve $\Omega$ act as
homeomorphisms. 
If $\Gamma<\PSL(n+1,\mathbb R)$ preserves $\Omega$ and is
discrete then its action for the Hilbert metric is properly discontinuous.
Thus, the definition of geometrical finiteness and all the related notions
(Section \ref{S:geom_finite}, Definition \ref{D:geom_finite}) are coherent for 
the action of a discrete group of projective
transformations $\Gamma$  on $\Omega$. The limit set $\Lambda_\Gamma$ is
again the smallest closed invariant set, and is hence basepoint
independent,
when $|\Lambda_\Gamma|\geq 3$ and
$\Omega$ is strictly convex with $C^1$ boundary (for more, see
\cite[D\'efinition 4.1, Lemme 4.2]{cramponmarquis_finitude}). 
We note the following lemma: 

\begin{lemma}
  \label{L:identify_boundaries}
  Let $\Omega\subset \mathbb RP^n$ be strictly convex  and $\Gamma<\PSL(n+1,\mathbb R)$ a discrete group preserving
  $\Omega$. 
  If the convex hull $C_\Gamma$ is a hyperbolic metric space when endowed
  with the Hilbert metric, then $\Lambda_\Gamma\subset \partial_{\text{proj}}\Omega$ is
  naturally identified with the hyperbolic boundary of  $C_\Gamma$.
\end{lemma}

\begin{proof}
  Fix a basepoint $o\in C_\Gamma$ to define $\Lambda_\Gamma$. Recall  
  the hyperbolic boundary $\partial C_\Gamma$ is the set of geodesic rays at $o$ up to bounded equivalence. See
  that by strict convexity of $\Omega$ and the definition of the Hilbert
  metric, if two projective line segments starting at $o$ and going to
  $\partial_{\text{proj}}\Omega$ are bounded distance apart then they coincide. 
  Thus, the map $\partial C_\Gamma\to \Lambda_\Gamma$ defined by associating to
  each geodesic ray based at $o$ and contained in $C_\Gamma$ its unique 
  intersection with
  $\partial_{\text{proj}}\Omega$ is well-defined.
  This point of intersection must lie in
  $\Lambda_\Gamma$ by definition of $C_\Gamma$. By convexity of $C_\Gamma$
  in this setting, 
  the map is surjective.  
\end{proof}

A pair $(\Omega,\Gamma)$ where $\Gamma<\PSL(n+1,\mathbb R)$ is a discrete
group that preserves
$\Omega$ is called a {\em convex real projective structure} on the quotient
manifold $\Omega/\Gamma$, and when $\Omega$ is strictly convex, we
specify that the structure is a {\em strictly convex real projective
structure}. 

\subsection{Relation to work of Crampon--Marquis} \label{S:CM14}

Geometrical finiteness in Hilbert geometry was first studied by 
Crampon--Marquis \cite{cramponmarquis_finitude}. They showed, for example,
that when $\Omega$ is strictly convex with $C^1$ boundary, the isometries
of $(\Omega,d_\Omega)$ can be classified as elliptic, parabolic, loxodromic
as in the setting of hyperbolic metric spaces 
as in Section~\ref{S:geom_finite} \cite[Theorem 3.3, Section 3.5]{cramponmarquis_finitude}. 
Crampon--Marquis used two definitions of geometrical finiteness: 

\begin{definition}[{Crampon--Marquis \cite{cramponmarquis_finitude}}]
  \label{D:cm_gf}
  For a strictly convex $\Omega\subset\mathbb RP^n$ with $C^1$ boundary and
  a non-elementary discrete group $\Gamma<\PSL(n+1,\mathbb R)$ which
  preserves $\Omega$,  the action of $\Gamma$
  is {\em geometrically finite on }$\partial_{\text{proj}} \Omega$ if every
point $\xi$ in $\Lambda_\Gamma$ is either a bounded parabolic
point or a conical limit point. This is the same as Definition
\ref{D:geom_finite} and so we will say that in this case, $\Gamma$ is a
{\em geometrically finite} group. 
More strongly, Crampon-Marquis define $\Gamma$ to be {\em geometrically finite with hyperbolic cusps}
if every point $\xi$ in $\Lambda_\Gamma$ is either a bounded parabolic point with stabilizer conjugate into $\SO(n,1)$ 
or a conical limit point.
Crampon--Marquis refer to geometrical finiteness with hyperbolic cusps as
``$\Gamma$ acting geometrically finitely on $\Omega$". We will avoid this
language to reduce confusion with Definition \ref{D:geom_finite}. 
\end{definition}

Crampon--Marquis show that these two conditions are not equivalent. 
In \cite[Proposition 10.7]{cramponmarquis_finitude}, they produce a group $\Gamma$ in
$\PSL(5,\mathbb R)$ which preserves a strictly convex set $\Omega$ with $C^1$
boundary in $\mathbb RP^4 $ such that $\Gamma$ 
is geometrically finite, but it is not geometrically
finite with hyperbolic cusps. 

\subsection{Examples with hyperbolic convex hull} \label{S:hyp_cc}

There is a large family of examples which are geometrically finite with
hyperbolic cusps, and these examples will have hyperbolic convex hull:

\begin{theorem}[{\cite[Th\'eor\`eme 1.8]{cramponmarquis_finitude}}]
  \label{T:hyp_convex_core}
  If $\Omega$ is strictly convex with $C^1$ boundary and $\Gamma$ is 
  geometrically finite with hyperbolic cusps, then $\Gamma$ is relatively hyperbolic,
  and the convex hull $C_\Gamma$ with the Hilbert metric is a hyperbolic metric space.
\end{theorem}

On a strictly convex $\Omega$ with $C^1$ boundary, any group acting with cofinite
volume, and more generally any geometrically finite group for which all parabolic
stabilizers have maximal rank, will be geometrically finite with hyperbolic
cusps (\cite[Theorem 0.4, Theorem 0.5]{clt}, \cite[Th\'eor\`eme 7.14]{cramponmarquis_finitude}). 
In fact when $\Omega$ admits a finite volume quotient, 
it is enough to assume that either $\Omega$ is strictly convex or $\Omega$
has $C^1$-boundary since these criteria are equivalent in that case \cite[Theorem 0.15]{clt}. 

More explicitly, examples include all 
geometrically finite $\Gamma<\PSL(3,\mathbb R)$ preserving
a strictly convex $\Omega\subset\mathbb RP^2$ with $C^1$ boundary. 
There are many such actions: for instance, 
the moduli space of finite volume strictly convex real projective
structures on a surface of genus $g$ with $p$ punctures has real dimension $16g-16+8p$
\cite{marquis_modules}, and contains the $6g-6+2p$ dimensional Teichm\"uller space via the Beltrami--Klein model. 
In higher dimensions, the moduli space of finite volume strictly convex
real projective structures can be nontrivial even though the Teichm\"uller
space is trivial. In every dimension, there are deformable examples
\cite{ballasmarquis,marquis12} via
the Johnson-Millson bending construction \cite{johnsonmillson}. In dimension three, there
are deformable examples that arise from a generalization of 
Thurston's gluing equations 
\cite{ballas_casella}. There are also examples of closed topological
manifolds that admit a strictly
convex projective structure but do not admit a Riemannian constant curvature 
hyperbolic metric \cite{Ben5, Kapovich2007}. 
It is plausible that there is a corresponding
finite volume non-compact example which admits a strictly convex real
projective structure but does not admit a metric of constant negative
curvature. Our results apply to any such examples.

The convex hull can be hyperbolic even when 
the action of $\Gamma$ only satisfies the
weaker, standard notion of geometrical
finiteness as in Definition \ref{D:geom_finite} without having hyperbolic
cusps. 
For instance, in the above-mentioned example (\cite[Proposition
10.7]{cramponmarquis_finitude}), the convex hull is hyperbolic
\cite{DGK,zimmer22}.
It seems plausible that  
for any Hitchin representation 
of a geometrically finite Fuchsian group which preserves a properly convex
subset of $\mathbb RP^d$, there exists some, possibly different, strictly
convex set $\Omega$ with $C^1$ boundary preserved by $\Gamma$ such that the
convex hull of the limit set in $\partial_{\text{proj}}\Omega$ is a hyperbolic metric
space, but at the moment it is not known (see \cite{CZZgf} for more
details).

\begin{remark}
  \label{rem:comments_CM}
Let us note that  \cite[Th\'eor\`eme 9.1]{cramponmarquis_finitude} also
claims that hyperbolicity of the convex hull $C_\Gamma$ of $\Gamma$
implies $\Gamma$ is geometrically finite with hyperbolic cusps, 
but as discussed above, that is not true. 
However, we do not need this implication in this paper, so this is irrelevant for our purposes.
Corrections to \cite{cramponmarquis_finitude} are expected in a forthcoming
erratum by Blayac--Marquis.

On the other hand, one might optimistically hope that whenever $\Gamma$ is geometrically
finite, the convex hull is hyperbolic. However, 
in the same forthcoming article, Blayac--Marquis 
produce examples such that $\Gamma$ is geometrically finite and fails to have
hyperbolic convex hull in $\Omega$. 
Interestingly, for the same provided examples, 
they produce another $\Gamma$-invariant $\Omega'$ for which the convex hull 
is hyperbolic. Whether or not this phenomenon holds
in general is unclear. 
\end{remark}

\subsection{Patterson--Sullivan measures for geometrically finite Hilbert geometries}
\label{S:PS_hilbert}

Crampon showed in his thesis that Patterson's construction can be adapted to the setting of
geometrically finite groups with hyperbolic cusps
when $\Omega$ is
strictly convex with $C^1$ boundary \cite[Theorem 4.2.1]{cramponthese}.
We call a measure arising from this construction a {\em Patterson--Sullivan
measure}. 
Crampon proves the measures are supported on the limit set
\cite[Section 4.2.1]{cramponthese}, and then proves in the case of surfaces
that the Patterson--Sullivan measures have no atoms \cite[Lemma 4.3.3,
Proposition 4.3.5]{cramponthese}. These arguments generalize to higher
dimensions due to \cite[Corollaire 7.18]{cramponmarquis_flot}, which
generalizes \cite[Lemma 1.3.4]{cramponthese}. In recent work, Zhu confirms
that these results extend to higher dimensions in the strictly convex with
$C^1$ boundary setting (see \cite[Lemma 11, Proposition 12, Corollary
13]{zhu20}).  These results hinge on a calculation that any
bounded parabolic group preserving $\Omega$ with rank $r$ and conjugate
into $\SO(n,1)$ has critical exponent $\delta_\Pi=\frac{r}2$, and if $\Pi$ is a subgroup of a
geometrically finite group $\Gamma$, then
$\delta_\Pi<\delta_\Gamma$ \cite[Lemma 11]{zhu20}, \cite[Lemme
9.8]{cramponmarquis_flot}.  The work of Zhu was further generalized by
Blayac to the rank one setting, without the strictly convex with $C^1$
boundary condition \cite[Theorem 1.6]{blayac} and by Blayac--Zhu when
$\Gamma$ is geometrically finite and $\Omega$ is strictly convex with $C^1$ boundary \cite[Theorem 9.1, Lemma 9.13,
Proposition 9.14]{blayaczhu}. Blayac--Zhu elaborate after \cite[Theorem 5.4]{blayaczhu} on why finiteness 
of Patterson--Sullivan measure given by \cite[Theorem 9.1]{blayaczhu} implies that Patterson--Sullivan measure has full support on $\Lambda_\Gamma$. 

\subsection{Growth independence of domain}

We observe in this section that the critical exponent and the 
upper and lower annular growth rates do not depend on the domain. 
One consequence of this is if $\Gamma$ is geometrically finite with hyperbolic cusps, then all parabolic subgroups 
have exponential growth and their critical exponent is equal to half of the rank of the group, as for hyperbolic
space. We will not need this observation as our applications will be more general. 

For $G\in\SL(d,\mathbb R)$, let  $\mu_1(G),\ldots,\mu_d(G)$ be the singular
values of $G$, listed in decreasing order. Then for $g\in\PSL(d,\mathbb
R)$, define $\kappa(g) :=\frac12(\log\mu_1(G)-\log\mu_n(G))$ for any lift $G$
of $g$.
\begin{proposition}[{Proposition 10.1 \cite{DGK}}]   \label{P:dgk}
  For any properly convex domain $\Omega$ in $\mathbb RP^n$ and any
  $o\in\Omega$, there exists a constant $C$ such that for all $g\in
  \Aut(\Omega)$,
  \[
    |d_\Omega(o,go)-\kappa(g)|\leq C.
  \]
\end{proposition}

\begin{lemma}
  When $\Pi<\PSL(n+1,\mathbb R)$ preserves some properly convex domain
  $\Omega$, the upper
  and lower annular growth rates $\delta_\Pi^-,\delta_\Pi^+$ and the
  critical exponent $\delta_\Pi$ do not depend
  on $\Omega$.
  In particular, if $\Omega'$ is another properly convex domain in $\mathbb RP^n$ and
  $\Pi<\Aut(\Omega)\cap \Aut( \Omega')$, then the action of $\Pi$ on
  $\Omega$ has $\delta$-tempered growth if and only if the action of
  $\Pi$ on $\Omega'$ has $\delta$-tempered growth.
  \label{L:tempered_invariant}
\end{lemma}

\begin{proof}
  By Proposition \ref{P:dgk}, fixing an $o\in\Omega$ and $o'\in\Omega'$,
  we have $d_\Omega(o,go)=d_{\Omega'}(o',go')+O(1)$ where $g\in\Pi$. 
  Let
  \[
    B_\Omega(t)=\#\{g\in \Pi \mid d_\Omega(o,go)\leq t\}, \qquad
    B_{\Omega'}(t)=\#\{g\in \Pi \mid d_\Omega(o',go')\leq t\},
  \]
  and
  \[
    A_{\Omega,r}(t)=\frac1r\log \frac{B_\Omega(t+r)}{B_\Omega(t)}.
  \]
  Then there exists a constant $C$ such that
  \[
    B_{\Omega'}(t-C)\leq B_\Omega(t)\leq B_{\Omega'}(t+C).
  \]
  It follows that $\delta_\Pi$ does not depend on $\Omega$. Similarly, for $r>2C$,
  \[
    \frac{r-2C}r A_{\Omega',r-2C}(t+C)\leq A_{\Omega,r}(t)\leq \frac{r+2C}r
    A_{\Omega',r+2C}(t-C).
  \]
  It is then straightforward to verify that $\delta_{\Pi}^{\pm}$ is independent of $\Omega$.
\end{proof}

\subsection{Growth of parabolic subgroups}

We prove in this section that parabolic subgroups in strictly convex Hilbert geometry 
have mixed exponential growth, as defined in Definition \ref{D:mixed_growth}. As a consequence, we will see that
if $\Gamma$ is geometrically finite and preserves a strictly convex domain with $C^1$ boundary, then
$\Gamma$ has tempered parabolic subgroups. 

\begin{proposition}
  Let $\Omega$ be a strictly convex domain in $\mathbb RP^n$ with $C^1$
  boundary. 
  Then every discrete 
  parabolic subgroup of $\PSL(n+1,\mathbb R)$ preserving $\Omega$ has 
  mixed exponential growth for the Hilbert metric.  
  \label{P:mixed_growth}
\end{proposition}

\begin{proof}
Let $\Pi$ be a parabolic subgroup in $\textup{Aut}(\Omega)$. Then by
\cite[Proposition 9.6]{blayaczhu} (which is a consolidation of \cite[Proposition
7.1]{cramponmarquis_finitude} and \cite[Lemme 7.6]{cramponmarquis_finitude}), 
$\Pi$ is a uniform lattice in its Zariski closure $\mathcal{N}$ and
moreover, $\mathcal{N}$ can be written as $\mathcal{N} = K \times U$ 
where $K$ is compact and $U$ is unipotent. If one considers the projection $p_U : \mathcal{N} = K \times U \to U$, the image $\Pi' = p_U(\Pi)$ 
is a lattice in $U$, and the kernel of the restriction of $p_U$ to $\Pi$ is finite. 

  Fix on $G$ the norm $\Vert g \Vert :=  \textup{tr} (g^t g) ^{1/2}$, which is submultiplicative. 
  Let us introduce, for $g \in G$, the notation 
  $$| g |  := \frac{\log \Vert g \Vert + \log \Vert g^{-1} \Vert}{2}.$$
  Since the norm is submultiplicative, we have $|g| \geq 0$ for any $g$. Moreover, we also have 
  $$|gh| \leq |g| + |h| \qquad \textup{for any }g, h \in G.$$
  Notice by Proposition \ref{P:dgk}, using that all matrix norms are equivalent, that 
  \begin{equation} \label{E:dgk}
    d_\Omega(o, g o)= |g| +O(1) \qquad \textup{for any }g \in \Pi.
  \end{equation}
  Let $\mathfrak u$ denote the Lie algebra of $U$. The exponential map $\exp:  \mathfrak u \to U$ is a diffeomorphism, and the pushforward 
  of a Lebesgue measure on $\mathfrak u$ is the Haar measure on $U$. 
   Let $P\colon \mathfrak
   u\to\mathbb R$ be given by 
   $$P(x)=\| \exp(x)\|^2\|\exp(-x)\|^2.$$
   Note that $\log P(x)=4|\exp(x)|$. 
   Since the norm is submultiplicative,
  $P(x)\geq  C>0$ for all $x\in\mathfrak u$ where $C = \Vert \textup{Id} \Vert^2 $ is a constant
  depending only on $n$. Since $U$ is a unipotent matrix group, 
  $\mathfrak u$ is a set of nilpotent matrices with bounded degree. 
  Then by the definition of matrix exponentiation, the entries of
  $\exp(x)$ are polynomials in the entries of $x$, and it 
  then follows from the definition that $P$ is a polynomial in $\dim (\mathfrak u)$ many variables. 
  Note that $P$ is proper since $\exp$ is a diffeomorphism and the norm
  function is a proper map for any choice of matrix norm on the finite dimensional
  vector space $\mathfrak u$.
  Letting $\lambda$ denote Lebesgue measure on $\mathfrak u$, see that
  the pushforward of $\lambda$ by $\exp$ is Haar measure on $U$
  (see e.g. \cite[Theorem 1.2.10]{corwin_greenleaf}).
 
Now, by Benoist--Oh \cite[Corollary 7.3(a)]{benoist_oh}, there exist
$a\in\mathbb Q, a>0$ and $ b\in\mathbb Z, b\geq 0$ 
  such that 
  $$\lambda(\{ x \in \mathbb{R}^d \ : \ P(x) \leq t \}) \asymp t^a (\log t)^b \qquad \textup{for any } t > 0.$$
  
  We will use this to show that 
 \begin{equation}   \label{E:hilbert_mixed_growth}
 \# \{ g \in \Pi' \ : \ |g| \leq t \} \asymp e^{4at} t^b \qquad \textup{for any } t > 0.
 \end{equation}
 Let $F$ be a compact fundamental domain for the action of $\Pi'$ on $U$, which exists since $\Pi'$ is a uniform lattice. 
Let $m$ be the Haar measure on $U$. 
If $\Delta := \sup\{ |f|, f \in  F\}$ is the diameter of $F$, then 
$$m (\{ u \in U \ : \  | u | \leq t - \Delta \})  \leq m(F) \# \{ g \in \Pi' \ : \ |g| \leq t \} \leq m (\{ u \in U \ : \  |u| \leq t + \Delta \}).$$
Moreover, since the pushforward of the Lebesgue measure under the exponential map is the Haar measure, 
$$m (\{ u \in U \ : \  | u | \leq t  \}) = \lambda (\{ x \in \mathfrak u \ : \ P(x) \leq e^{4 t} \}).$$
Equation \eqref{E:hilbert_mixed_growth} follows. 

Finally, we show that, for any $t > 0$, 
\begin{equation}
  \label{E:hilbert_mixed_growth2}
\# \{ g \in \Pi \ : \ d_\Omega(o, go) \leq t \} \asymp e^{4at}t^b.  
\end{equation}
Let $K_0$ be the kernel of $p_U\vert_\Pi$, $c$ the cardinality of
$K_0$, and $A = \sup \{ d_\Omega(o, ko) : k \in K_0\}$. 
Then, since any $g \in \Pi$ can be written as $g = k u$ for $k \in K$ and
$u \in \Pi'$, and at most $c$ values of $g$ correspond to a given value of $u$, 
$$\# \{ g \in \Pi \ : \ |g| \leq t - A\} \leq \# \{ u \in \Pi' \ : \ |u| \leq t \} \leq c \# \{ g \in \Pi \ : \ |g| \leq t + A\}$$
hence, for any $t > 0$, 
$$\# \{ g \in \Pi \ : \ |g| \leq t \} \asymp e^{4at}t^b.$$

Finally, Equation \eqref{E:dgk} now implies Equation
\eqref{E:hilbert_mixed_growth2}, as desired. 
\end{proof}

We will now see how to compute the growth rate for rank one parabolic subgroups.

\begin{lemma} \label{L:rankone}
  Let $\Omega\subset \mathbb RP^n$ be a properly convex domain
  and $\Pi<\PSL(n+1,\mathbb R)$ a discrete group preserving
  $\Omega$.
  If $\Pi$ is a parabolic group of rank one, generated (up to finite index) by $g$, 
  then $\Pi$ has pure exponential growth, i.e.  for any $T > 0$ one has 
$$\# \{ h \in \Pi \ : \ d_\Omega(o, h o) \in [T, T+1] \} \asymp e^{\delta_\Pi T}$$ 
with $\delta_\Pi = \frac{1}{k-1}$, where $k$ is the size of the largest Jordan block of $g$.
\end{lemma}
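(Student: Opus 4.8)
The plan is to first reduce to a coarse computation of $d_\Omega(o,g^m o)$ and then finish with a piece of linear algebra. I would begin by replacing $\Pi$ with the finite-index cyclic subgroup $\langle g\rangle$: passing to a finite-index subgroup affects the orbit-counting function defining pure growth only by a bounded multiplicative constant and a bounded shift of $T$, since the finitely many coset representatives displace the basepoint by a bounded amount. It then suffices to show
\[
  d_\Omega(o, g^m o) = (k-1)\log|m| + O(1) \qquad \text{as } |m|\to\infty
\]
for one basepoint $o$ of our choosing (a general basepoint, and likewise the passage from $g$ to the other elements of $\Pi$, then follows by the triangle inequality); indeed this gives $\#\{m : d_\Omega(o,g^m o)\in[T,T+1]\}\asymp\#\{m : (k-1)\log|m|\in[T-C,T+1+C]\}\asymp e^{T/(k-1)}$, which is the claim with $\delta_\Pi=\tfrac1{k-1}$.

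For the displayed estimate the key tool I would use is the dual description of the Hilbert metric. Let $\mathcal C\subseteq\mathbb R^{n+1}$ be the open convex cone over $\Omega$ and $\mathcal C^\ast=\{\phi:\phi>0\text{ on }\overline{\mathcal C}\smallsetminus\{0\}\}$ its dual cone; both are open, properly convex cones. For $v,w\in\mathcal C$ one has
\[
  d_\Omega([v],[w])=\tfrac12\log\!\Big(\sup_{\phi\in\mathcal C^\ast}\tfrac{\phi(w)}{\phi(v)}\cdot\sup_{\phi\in\mathcal C^\ast}\tfrac{\phi(v)}{\phi(w)}\Big).
\]
Fixing a lift $\hat g\in\SL(n+1,\mathbb R)$ of $g$ preserving $\mathcal C$ (so $\hat g^{\mathsf T}$ preserves $\mathcal C^\ast$) and a basepoint $o=[v]$ with $v\in\mathcal C$, I would use that $v$ is an interior point to get $\phi(v)\asymp\|\phi\|$ uniformly for $\phi\in\mathcal C^\ast$, and that $\sup_{\phi\in\mathcal C^\ast,\,\|\phi\|=1}\phi(u)\asymp\|u\|$ uniformly for $u\in\mathcal C$ — both by compactness of $\overline{\mathcal C^\ast}\cap S^n$ together with properness of $\mathcal C$. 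Applying these with $u=\hat g^m v\in\mathcal C$ gives $\sup_\phi\phi(\hat g^m v)/\phi(v)\asymp\|\hat g^m v\|$; substituting $\psi=\hat g^{m\mathsf T}\phi$, which again ranges over $\mathcal C^\ast$, turns the second supremum into $\sup_\psi\psi(\hat g^{-m}v)/\psi(v)\asymp\|\hat g^{-m}v\|$. Hence
\[
  d_\Omega(o,g^m o)=\tfrac12\log\!\big(\|\hat g^m v\|\cdot\|\hat g^{-m}v\|\big)+O(1),
\]
with a constant depending only on $(\Omega,o)$.

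Finally I would analyze $\|\hat g^{\pm m}v\|$. Since $g$ is parabolic it has zero Hilbert translation length, which forces all eigenvalues of $\hat g$ to have modulus $1$ (using the formula for the translation length in terms of the extreme eigenvalue moduli; see \cite[\S7]{cramponmarquis_finitude}), and $\hat g$ is not semisimple because $g$ has infinite order, so $k\ge2$. For a linear map all of whose eigenvalues have modulus $1$ and whose largest Jordan block has size $k$, the set $Z$ of vectors $u$ with $\|\hat g^m u\|=o(m^{k-1})$ is a proper $\hat g$-invariant subspace and $\|\hat g^m u\|\asymp m^{k-1}$ for $u\notin Z$; the same holds for $\hat g^{-1}$, whose Jordan blocks have the same sizes, with a proper invariant subspace $Z'$. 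As $\mathcal C$ is open and $Z\cup Z'$ is a proper closed subset, I am free to choose $o=[v]$ with $v\notin Z\cup Z'$; then $\|\hat g^{m}v\|\asymp m^{k-1}$ and $\|\hat g^{-m}v\|\asymp m^{k-1}$, so combining with the previous display yields $d_\Omega(o,g^m o)=(k-1)\log|m|+O(1)$, as needed.

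The step I expect to be the main obstacle is the comparison $d_\Omega(o,g^m o)=\tfrac12\log(\|\hat g^m v\|\,\|\hat g^{-m}v\|)+O(1)$ with a constant uniform in $m$: this requires carefully extracting the uniform estimates $\phi(v)\asymp\|\phi\|$ and $\sup_{\|\phi\|=1}\phi(u)\asymp\|u\|$ from properness of the cones, and justifying the dual formula for the Hilbert metric in the form stated. The remaining ingredients — behaviour under passing to a finite-index subgroup, the modulus-$1$ property of eigenvalues of parabolics, and the polynomial growth of norms of powers of a Jordan block — are routine or standard.
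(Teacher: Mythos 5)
Your argument is correct and reaches the same coarse distance asymptotic $d_\Omega(o,g^m o) = (k-1)\log|m| + O(1)$ that drives the paper's proof, but you establish it by a genuinely different route. The paper is terse because it leans on two citations: it invokes \cite[Proposition 2.13]{clt} to conclude that every eigenvalue of $g$ equals $1$ (so $g$ is unipotent, not merely of modulus-one spectrum), and then invokes \cite[Proposition 10.1]{DGK}, \cite[Proposition 2.6]{blayaczhu} for the operator-norm comparison $e^{2d_\Omega(o,go)} \asymp \|g\|\,\|g^{-1}\|$, from which $d_\Omega(o,g^n o) = \tfrac12(\log\|g^n\|+\log\|g^{-n}\|)+O(1) = (k-1)\log n + O(1)$ follows for \emph{every} basepoint, with no genericity hypothesis, since $\|g^n\|$ is an operator norm. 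You instead rederive the distance estimate from scratch via the dual-cone formula for the Hilbert metric, arriving at the vector-norm comparison $d_\Omega(o,g^m o) = \tfrac12\log(\|\hat g^m v\|\,\|\hat g^{-m}v\|)+O(1)$; and you use only the weaker spectral input (eigenvalues of modulus one, read off from the vanishing of the translation length), which forces you to choose a lift $v$ outside the proper subspaces $Z\cup Z'$ to guarantee $\|\hat g^{\pm m}v\| \asymp m^{k-1}$. The trade is: your argument is self-contained and works under a weaker spectral hypothesis, at the price of a genericity-of-basepoint step that the paper's operator-norm formulation renders unnecessary; the paper's is shorter by importing sharper black boxes. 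Your explicit handling of the finite-index reduction from $\Pi$ to $\langle g\rangle$ is a point the paper glosses over (it counts over $n\in\Z$ directly), so that part of your write-up is if anything more careful.
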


For example, Lemma \ref{L:rankone} applies to the Crampon--Marquis example
\cite[Proposition 10.7]{cramponmarquis_finitude} with $n=k=4$.

\begin{proof}[Proof of Lemma \ref{L:rankone}]
  By \cite[Proposition 2.13]{clt}, which applies to any properly convex
  $\Omega$, every eigenvalue of $g$ is equal to 1.
  Thus by taking the Jordan form, $g$ is conjugate to a unipotent matrix.
By taking powers, we have
$$\Vert g^n \Vert \asymp |n|^{k-1}\qquad \textup{for any }n \in \mathbb{Z}$$
where $k$ is the size of the largest Jordan block of $g$ and $\Vert \cdot
\Vert$ is any invariant norm, such as the leading singular
value. Then by Proposition \ref{P:dgk} (see also \cite[Proposition 2.6]{blayaczhu}), we obtain
$$d_\Omega(o, g^n o) = \frac{ \log \Vert g^n \Vert + \log \Vert g^{-n} \Vert}{2} + O(1) = (k-1) \log n + O(1)$$
thus
$$\# \{ n \in \mathbb{Z} \ : \ d_\Omega(o, g^n o) \in [T, T+1] \} \asymp e^\frac{T}{k-1}$$
for any $ T > 0$, which proves the claim.
\end{proof}

From the previous two results, we obtain that parabolic subgroups have tempered growth. 

\begin{corollary}
  Let $\Omega \subset \mathbb RP^n$ be a strictly convex domain with
  $C^1$ boundary, and $\Gamma< \PSL(n+1,\mathbb R)$ a 
  geometrically finite group preserving $\Omega$. 
  Then $\Gamma$ has tempered parabolic subgroups. 
  \label{C:hilbert_tempered}
\end{corollary}

\begin{proof}
  By Proposition \ref{P:mixed_growth} we obtain $\delta_\Pi^{\pm}=\delta_\Pi$; since every parabolic group contains 
  a rank one parabolic subgroup, 
  from Lemma \ref{L:rankone} we obtain that $\delta_\Pi>0$.
  Blayac--Zhu prove \cite[Lemma 8.13]{blayaczhu} that for any non-elementary group $\Gamma$
  acting on a strictly convex $\Omega$ with $C^1$ boundary, any parabolic
  subgroup $\Pi<\Gamma$ has $\delta_\Pi<\delta_\Gamma$. This concludes the
  proof. 
\end{proof}

\subsection{Statement of result}
\label{S:final_result_hilbert}

We are now ready to verify that geometrically finite Hilbert geometries
satisfy the global shadow lemma and logarithm law. 
The theorem below applies to all the examples discussed in Subsection
\ref{S:hyp_cc}, and implies Theorems
\ref{T:intro-global_shadow_lemma-Hilbert} and
\ref{T:intro-log-law} from the introduction. 

\begin{theorem}
  \label{T:apply_hilbert}
  Let $\Omega$ be a strictly convex domain in $\mathbb RP^n$ with $C^1$
  boundary and $\Gamma<\PSL(n+1,\mathbb R)$ a geometrically finite group
  which preserves $\Omega$.  
  Assume the convex hull of the limit set $C_\Gamma$ is hyperbolic with
  respect to the Hilbert metric. Then any Patterson--Sullivan measure $\mu$ satisfies the global 
  shadow lemma (Theorem \ref{T:global_shadow_lemma}), and for $\mu$-a.e.
  $\xi\in\Lambda_\Gamma$, 
    \[
    \limsup_{t\to+\infty} \frac{d(\xi_t,\Gamma o)}{\log t} =
    \frac1{2(\delta-\delta_{\max})}
  \]
  where $\delta_{\max}$ is the maximal growth rate of any parabolic subgroup,
  and $\xi_t$ is the point on the geodesic ray $[o,\xi)$ that is distance $t$ from $o$.
\end{theorem}

\begin{proof}
  We need only verify the hypotheses. First, $(C_\Gamma, d_\Omega)$ is a
  proper hyperbolic metric since the Hilbert metric is proper on $\Omega$.  
  Then $C_\Gamma$ has boundary $\Lambda_\Gamma$ by Lemma
  \ref{L:identify_boundaries}, 
  and $\Gamma$ acts minimally on
  $\Lambda_\Gamma$ since the action is non-elementary, as discussed in the
  beginning of this section. The 
  Patterson--Sullivan measures constructed by Blayac--Zhu 
  are a conformal density of dimension $\delta_\Gamma$ on $\Lambda_\Gamma$
  with no atoms 
  \cite[Proposition 9.14, Theorem 9.1]{blayaczhu} (see Subsection~\ref{S:PS_hilbert} for elaboration), 
  and $\Gamma$ has $\delta_\Gamma$-tempered parabolic subgroups by Corollary 
  \ref{C:hilbert_tempered}. Thus, the hypotheses of the global shadow lemma
  (Theorem \ref{T:global_shadow_lemma}) are satisfied. Finally, since
  $\Gamma$ has mixed exponential growth by Proposition \ref{P:mixed_growth}, the
  hypotheses of the logarithm law (Theorem \ref{T:loglaw}) are
  satisfied, completing the proof.
\end{proof}

\bibliographystyle{alpha}
\bibliography{refs}

\end{document}